\documentclass[10pt,a4paper]{amsart}

\usepackage[a4paper,top=3.5 cm,bottom=3 cm,left=2.5 cm,right=2.5 cm]{geometry}
\sloppy 
\allowdisplaybreaks
\usepackage[T1]{fontenc}
\usepackage[english]{babel}
\usepackage{url}
\usepackage{amsmath,amssymb,amsthm}
\usepackage{mathtools}
\usepackage{graphicx}
\usepackage{xcolor} 
\usepackage{bbm}
\usepackage{dsfont}
\usepackage{stmaryrd}
\usepackage{esint}
\usepackage{floatrow}
\usepackage{epsfig}
\usepackage{hyperref}
\usepackage[normalem]{ulem}
\usepackage{mathrsfs}
\usepackage{tikz}
\usetikzlibrary{decorations.markings,backgrounds}
\usetikzlibrary{arrows.meta}
\usepackage{pgfplots}\pgfplotsset{compat=1.18}
\usepackage{pst-plot}
\usepackage{subfig}
\usepackage{caption}
\usepackage{float}
\usepackage[colorinlistoftodos]{todonotes}
\usepackage{import}
\usepackage{enumitem}
\usepackage{verbatim,fancyvrb}
\usepackage{epigraph}

\definecolor{mblue}{rgb}{         0    0.4470    0.7410}
\definecolor{mred}{rgb}{    0.8500    0.3250    0.0980}  
\definecolor{myellow}{rgb}{    0.9290    0.6940    0.1250}

\theoremstyle{plain}
\newtheorem{lemma}{Lemma}[section]
\newtheorem{proposition}[lemma]{Proposition}
\newtheorem{theorem}[lemma]{Theorem}

\newtheorem*{idea*}{Idea}

\theoremstyle{definition}
\newtheorem{deff}[lemma]{Definition}

\theoremstyle{remark}
\newtheorem{remark}[lemma]{Remark}

\numberwithin{equation}{section}


\newcommand{\loc}{\mathrm{loc}}
\newcommand{\rel}{\mathrm{rel}}
\newcommand{\lip}{\mathrm{Lip}}
\newcommand{\R}{\mathbb{R}}
\newcommand{\N}{\mathbb{N}}

\newcommand{\eps}{{\varepsilon}}

\DeclareMathOperator{\dive}{div}

\newcommand{\abs}[1]{\left\lvert#1\right\rvert}
\newcommand{\norm}[1]{\left\lVert#1\right\rVert}
\newcommand\skal[1]{\left\langle#1\right\rangle}

\renewcommand{\div}{\operatorname{div}}

\newcommand{\dd}{\,\mathrm{d}}

\newcounter{eq}[section]

\title[Weak-strong uniqueness]{Relative energy method for weak-strong uniqueness of the inhomogeneous Navier-Stokes equations}

\subjclass[2020]{76D03; 76D05}
\keywords{Inhomogeneous Navier-Stokes equations, weak-strong uniqueness, relative energy method, Leray-Hopf weak solutions
\\$^*$corresponding author: stefan.skondric@fau.de}

\date{}

\author[T. Crin-Barat]{Timothée Crin-Barat}
\address[T. Crin-Barat]{Chair for Dynamics, Control, Machine Learning and Numerics, Alexander Von Humboldt- Professorship, Department of
Mathematics, Friedrich-Alexander-Universität Erlangen-Nürnberg, Cauerstraße 11, 91058 Erlangen, Germany.}
\email{timothee.crin-barat@fau.de}

\author[S. \v{S}kondri\'c]{Stefan \v{S}kondri\'c$^*$}
\address[S. \v{S}kondri\'c]{Chair of Analysis, Department of
Mathematics, Friedrich-Alexander-Universität Erlangen-Nürnberg, Cauerstraße 11, 91058 Erlangen, Germany.}
\email{stefan.skondric@fau.de}

\author[A. Violini]{Alessandro Violini}
\address[A. Violini]{Department Mathematik und Informatik, Universität Basel, Spiegelgasse 1, 4051
Basel, Switzerland}
\email{alessandro.violini@unibas.ch}

\begin{document}

\begin{abstract}
We present a weak-strong uniqueness result for the inhomogeneous Navier-Stokes (INS) equations in $\R^d$ ($d=2,3$) for bounded initial densities that are far from vacuum. Given a strong solution within the class employed in \cite{PaicuZhangZhang2013} and \cite{ChenZhangZhao2016} and a \textit{Leray-Hopf} weak solution, we establish that they coincide if the initial data agree. The strategy of our proof is based on the relative energy method and new $W^{-1,p}$-type stability estimates for the density. A key point lies in proving that every Leray-Hopf weak solution originating from initial densities far from vacuum remains distant from vacuum at all times.
\end{abstract}

\maketitle

\section{Introduction and main results}
\subsection{Presentation of the model and literature}
We consider the \emph{inhomogeneous incompressible Navier--Stokes equations} in $\R^d$ for $d=2,3$:
\begin{align}\label{eq:ns}
  \begin{cases}
    \partial_t \rho + \dive (\rho u) = 0, & t > 0, \ x \in \R^d, \\
    \partial_t (\rho u) + \dive (\rho u \otimes u) - \nu \Delta u + \nabla P = 0, & t > 0, \ x \in \R^d, \\
    \operatorname{div} u = 0, & t > 0, \ x \in \R^d, \\
    \rho(0,x) = \rho_0(x), & x \in \R^d,\\
    u(0,x) = u_0(x), & x \in \R^d,
  \end{cases}
\end{align} 
where $\rho=\rho(t,x) \geq 0$ is a scalar \emph{density function}, $P=P(t,x)\in \R$ is the \emph{pressure} and 
$u=u(t,x)\in\R^d$ is the \emph{velocity field} of the fluid. The first equation of \eqref{eq:ns} represents the 
conservation of mass, the second is the momentum equation and the third is the incompressibility condition. 

The inhomogeneous Navier-Stokes system shares many properties with its homogeneous counterpart (\eqref{eq:ns} with 
$\rho \equiv 1$). Formally, its solutions satisfy the energy balance:
\begin{align}\label{eq:energyconsintro}
    \frac{1}{2} \int_{\R^d} \rho(t) |u(t)|^2 \dd x + \nu \int_0^t \int_{\R^d} |\nabla u|^2 \dd x \dd s =
    \frac{1}{2} \int_{\R^d} \rho_0 |u_0|^2 \dd x,
\end{align}
and they obey the same scaling symmetries as the solutions of the homogeneous
Navier-Stokes equations. More precisely, if $(\rho,u,P)$ is a solution of the inhomogeneous Navier-Stokes equations, then,
for every $\lambda >0$, we have that $(\rho_\lambda,u_\lambda,P_\lambda)$, given by
\begin{align*}  (\rho_\lambda(t,x),u_\lambda(t,x),P_\lambda(t,x)) = 
    (\rho(\lambda^2 t,\lambda x),\lambda u(\lambda^2 t,\lambda x),
    \lambda^2 P(\lambda^2 t,\lambda x)), \quad (t,x) \in (0,\infty) \times \R^d,
\end{align*}
is still a solution of the inhomogeneous Navier-Stokes equations. 

These fundamental properties have significant implications for the theory of inhomogeneous Navier-Stokes equations. Specifically, the energy equality
\eqref{eq:energyconsintro} suggests seeking weak solutions within a framework similar to that of the homogeneous Navier-Stokes 
equations. In that direction, one of the first existence result of global-in-time weak solutions, satisfying \eqref{eq:energyconsintro}, 
was obtained by Kazhikhov in \cite{Kazikov1974} for densities bounded from below. Later, Simon \cite{Simon1990} removed the lower bound on the density and Lions and Desjardins \cite{Lions1996,Desjardins19972} constructed weak solutions for the density-dependent Navier-Stokes equations in arbitrary dimensions. The uniqueness of these solutions remains, however, an outstanding open problem, even in two dimensions.
\medbreak
In \cite{LadyzhenskayaSolonnikov1975}, for $C^1$ initial densities and smooth enough initial velocities, Ladyzhenskaya and Solonnikov 
established the first well-posedness result for the inhomogeneous Navier-Stokes equations in smooth bounded domains of $\R^2$ and $\R^3$. This was
generalized to weakly differential initial densities by Desjardins \cite{Desjardins19971} and Danchin \cite{Danchin2006}. 

In the present paper, we focus on initial densities that are only assumed to be bounded and that are far from vacuum. Under this condition, critical spaces (for the initial velocity) provide natural frameworks to establish well-posedness results. As mentioned, the inhomogeneous Navier-Stokes equations have the same scaling symmetries as the homogeneous Navier-Stokes
equations and, therefore, the same critical spaces for the velocity, e.g.
\begin{align}\label{eq:criticalspaces}
    \dot{H}^{d/2-1}(\R^d), \dot{B}^{d/p-1}_{p,q}(\R^d), \dot{B}^{d/p-1}_{p,\infty}(\R^d),
\end{align}
with $p\in (1,\infty)$ and $q\in [1,\infty)$. See \cite{AlbrittonPhDThesis} for a review on the topic of critical frameworks for the homogeneous Navier-Stokes equations. The uniqueness of solutions emanating from initial velocities in critical spaces of the inhomogeneous Navier-Stokes equations is, so far, mostly unknown. One of the reasons is the 
lack of Lipschitz estimates for the solutions of \eqref{eq:ns} with initial values in \eqref{eq:criticalspaces}. More precisely,
let $X$ be one of the spaces in \eqref{eq:criticalspaces} and let $(\rho,u)$ be a solution of \eqref{eq:ns} with respect to
the initial value $(\rho_0,u_0)$, with $u_0 \in X$. Then, for $q\ne1$, it is not clear how to show that the solutions of \eqref{eq:ns} satisfy, for any $T>0$,
\begin{align}\label{eq:lipestimate}
    \int_0^T \norm{\nabla u}_\infty \dd t \leq \norm{u_0}_X.
\end{align}
Note that even the solutions of the heat equation do not satisfy such estimates in these critical frameworks. For the heat equation, \eqref{eq:lipestimate} becomes true if $X=H^s(\R^d)$ for $s>d/2-1$, or $X=\dot{B}^{d/p-1}_{p,1}(\R^d)$.

This observation is consistent with the results obtained by Paicu, Zhang and Zhang \cite{PaicuZhangZhang2013} and Chen, Zhang 
and Zhao \cite{ChenZhangZhao2016}. In \cite{PaicuZhangZhang2013}, the authors proved the existence and uniqueness of solutions of \eqref{eq:ns} when $u_0 \in H^\eta(\R^2)$, with $\eta>0,$ and $\rho_0 \in L^\infty(\R^2)$ is bounded from below, i.e. far from vacuum. Furthermore, in the three-dimensional case, the authors proved the existence and uniqueness of global-in-time solutions of \eqref{eq:ns}
provided the initial value $u_0$ belongs to  $H^1(\R^3)$ and satisfies the smallness condition
\begin{align}
    \norm{u_0}_2^{1/2} \norm{\nabla u_0}_2^{1/2} \leq \eps_*,
\end{align} for $\eps_*>0$ a small enough constant. Without the smallness assumption, their construction provides local-in-time unique solutions for $(\rho_0,u_0)\in L^\infty(\R^3)\times H^{1}(\R^3)$. In \cite{ChenZhangZhao2016}, the authors generalized this statement and obtained a global-in-time well-posedness result for initial velocities lying in $H^\eta(\R^3)$ with $\eta>1/2$ and under the smallness condition
\begin{align}
    \norm{u_0}_{\dot{H}^{1/2}} \leq \eps_*.
\end{align}
More recently, Danchin and Wang \cite{DanchinWang2022} showed the global-in-time well-posedness for initial velocities in the critical Besov space $\dot{B}^{\frac{d}{p}-1}_{p,1}(\R^d)$ with $p\in (1,d)$, and for small and
only bounded initial densities. In \cite[Theorem 1.7]{DanchinWang2022}, they suggest a way to derive weak-strong uniqueness results in $2d$ far from vacuum and in $3d$ including vacuum. This is strongly related to our goal in the present paper, we discuss their result in more detail in Remark \ref{rq:DW}.
Then, Danchin \cite{Danchin2023} justified the global 
well-posedness when the initial density is in $L^\infty$ and the initial velocity lies in a subspace of $L^2$ obtained by interpolation. In the case of smooth densities, the space defined in \cite{Danchin2023} coincides with the critical space $B^{0}_{2,1}(\R^2)$, otherwise, its construction depends on $\rho_0$.
We also mention \cite{DanchinVasilyev23} where Danchin and Vasilyev
obtain a well-posedness result in critical tent spaces where the initial velocities lie in a subspace of BMO$^{-1}$.

In the presence of initial vacuum, i.e. $\rho_0\geq0$, the local and global well-posedness in bounded domains was justified by Danchin and Mucha \cite{DanachinMucha2019}. Recently, in \cite{PrangeTan2023}, Prange and Tan studied the existence and uniqueness of solutions on $\R^d$, $d=2,3,$ with some 
special vacuum configurations. In \cite[Proposition 4.1]{PrangeTan2023}, they established a result concerning 
weak-strong uniqueness in $\mathbb{R}^2$, refer to Remark \ref{rq:RQ1} to see how this compares to ours.

\smallbreak
Under regularity assumptions on the initial density, Danchin \cite{Danchin2003} demonstrated the existence and uniqueness of local-in-time solutions if
\begin{align*}
    \norm{\rho_0 - 1}_{\dot{B}^{d/2}_{2,1}} \leq \eps_*\quad \text{and} \quad u_0 \in \dot{B}^{d/2-1}_{2,1}(\R^d),
\end{align*}
and the global-in-time existence if the initial velocity is sufficiently small in $\dot{B}^{d/2-1}_{2,1}(\R^d)$. This result was extended by Abidi and Paicu \cite{AbidiPaicu2007} and Danchin and Mucha \cite{DanchinMucha2012}, for 
suitable initial densities, to the $L^p$ framework:
\begin{align*}
    u_0 \in \dot{B}^{d/p-1}_{p,1}(\R^d),
\end{align*}
for $1 < p < d$ and $1 \leq p < 2d,$ respectively. This was further generalized by Haspot \cite{Haspot2012} to cover 
the full range $p \in (1, \infty)$. In these results, the philosophy is to balance the regularity between the initial 
velocity and the initial density, see \cite[Theorem 1.1]{Haspot2012}. Furthermore, Haspot removed the smallness 
assumption in the case $p=2,$ see \cite[Theorem 1.2]{Haspot2012}. Then, relying on $L^1$-maximal regularity estimates, 
Xu \cite{Xu2022} removed the smallness condition in the case $d=3.$
In 2D,  Abidi and Gui \cite{AbidiGui2021} established the well-posedness for $\rho_0\in\dot{B}^1_{2,1}(\R^2)$ and $u_0\in\dot{B}^0_{2,1}(\R^2) $, lowering the 
regularity on the initial density. Recently, Abidi, Gui and Zhang \cite{AbidiGuiZhang2023} generalized this result following the balance of regularity philosophy. 

\subsection{Aims of the paper and notions of solution}
In this paper, we prove a weak-strong uniqueness result for the inhomogeneous Navier-Stokes equations in both 
$\mathbb{R}^2$ and $\mathbb{R}^3$ for initial densities in $L^\infty$ that do not exhibit vacuum. We consider initial data satisfying
\begin{equation}\label{eq:initialreg}
\begin{aligned} 
    & 0 < c_0 \leq \rho_0(x) \leq C_0 < +\infty \quad \text{for almost every } x \in \R^d,\\
    &u_0 \in  L^2_\sigma(\R^d; \R^d),
\end{aligned}
\end{equation}
for two positive constants $c_0$ and $C_0$ and where $L^2_\sigma$ denotes the space of divergence-free $L^2$ vector fields. The weak solutions we consider are the energy-admissible weak solutions or \textit{Leray-Hopf} weak solutions defined by Lions in \cite{Lions1996}.

\begin{deff}[\textit{Leray-Hopf} weak solution]\label{def:lerayhopfsol}
We designate a pair $(\rho,u)$ as a \textit{Leray-Hopf} weak solution of \eqref{eq:ns} with initial data $(\rho_0,u_0)$ 
satisfying \eqref{eq:initialreg} if
    \begin{itemize}
        \item[(i)] The solution satisfies
        \begin{align*}
           & \sqrt{\rho} u \in L^\infty((0,\infty);L^2(\R^d)),
            \quad \rho \in L^\infty((0,\infty)\times \R^d),
            \\ &  u \in L^2_{\loc}((0,\infty)\times\R^d))  \quad \text{and} \quad \nabla u \in L^2((0,\infty);L^2(\R^d)). 
        \end{align*}
        \item[(ii)] The first three equations of \eqref{eq:ns} are satisfied in the sense of distributions:
        \smallbreak
        \begin{itemize}
            \item[$\bullet$] For every  $\varphi \in C_c^\infty([0,\infty) \times \R^d; \R)$,
        \begin{align*}
            - \int_0^\infty \int_{\R^d} \rho \partial_s \varphi \dd x \dd s
            =  \int_0^\infty \int_{\R^d} \rho u \cdot \nabla \varphi \dd x \dd s 
            + \int_{\R^d} \rho_0 \varphi(0) \dd x,
        \end{align*}
        \item[$\bullet$] For every $\varphi \in C_{c,\sigma}^\infty([0,\infty) \times \R^d;\R^d)$,
        \begin{align*}
            - \int_0^\infty \int_{\R^d} &\rho \partial_t \varphi \cdot u+
            \rho u \otimes u : \nabla \varphi \dd x \dd s  
            + \nu \int_0^\infty \int_{\R^d} \nabla u : \nabla \varphi \dd x \dd s = \int_{\R^d} \rho_0 u_0 \cdot \varphi(0)\dd x,
        \end{align*}
        \item[$\bullet$]For every  $\varphi \in C_c^\infty([0,\infty) \times\R^d;\R)$,
        \begin{equation*}
            \int_0^\infty\int_{\R^d} u\cdot \nabla \varphi \dd x\dd s =0.
        \end{equation*}
  \end{itemize}
        \item[(iii)] For every $t \in (0,\infty)$, the energy inequality holds:
        \begin{align}\label{eq:EE}
            \int_{\R^d} \rho(t) |u(t)|^2 \dd x + \nu \int_0^t \int_{\R^d} |\nabla u(s)|^2 \dd x \dd s \leq 
            \int_{\R^d} \rho_0|u_0|^2 \dd x.
        \end{align}
    \end{itemize}
\end{deff}

In \cite{CrippaPhDThesis}, Crippa demonstrated that weak solutions of transport equations with divergence-free 
$L^1_{\loc}$ velocity field can be modified on a time-set of measure zero such that it belongs to $C_{w^*}([0,\infty);
L^\infty(\R^d))$. Based on this observation, we assume that the density of a Leray-Hopf weak solution satisfies $\rho \in C_{w^*}
([0,\infty); L^\infty(\R^d))$ in the rest of the manuscript.

Next, we recall the following existence result for weak solutions, see \cite[Theorem 2.1]{Lions1996}.

\begin{theorem}[\cite{Lions1996}]\label{thm:existweaksol}
    Let $(\rho_0,u_0)$ satisfy \eqref{eq:initialreg}. There exists a global weak solution of \eqref{eq:ns} with 
    initial data $(\rho_0,u_0)$ satisfying the energy inequality \eqref{eq:EE}. Furthermore, for all $0 \leq c_0 \leq C_0 < 
    \infty$, the Lebesgue measure of
    \begin{align}\label{eq:measuredensity}
        \{ x \in \R^d\, |\, c_0 \leq \rho(t,x) \leq C_0 \}
    \end{align}
    is independent of $t.$
\end{theorem}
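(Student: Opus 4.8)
The proof splits into the existence statement and the conservation of the density's distribution. For existence, the plan is to follow the compactness scheme of \cite{Lions1996}: regularize the data and the transport equation (for instance by adding an artificial diffusion $\eps\Delta\rho$ to the mass equation and mollifying $u$ in the flux), and solve the resulting approximate system by a fixed-point or Galerkin argument. The maximum principle for the regularized transport equation keeps $c_0\le\rho^\eps\le C_0$, so the approximate densities are uniformly bounded and bounded away from vacuum, while the energy identity \eqref{eq:energyconsintro} furnishes uniform bounds for $\sqrt{\rho^\eps}\,u^\eps$ in $L^\infty_tL^2_x$ and for $\nabla u^\eps$ in $L^2_{t,x}$. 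Since $\rho^\eps\ge c_0>0$, these translate into bounds for $u^\eps$ in $L^\infty_tL^2_x\cap L^2_t\dot H^1_x$.

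To pass to the limit, I would first extract weak-$*$ limits $\rho^\eps\weaks\rho$ in $L^\infty$ and $u^\eps\wlim u$ in $L^2_tH^1_x$. The only delicate term is the convective one: using the momentum equation to control $\partial_t(\rho^\eps u^\eps)$ in a negative-order space and combining it with the spatial $H^1$-bound on $u^\eps$, an Aubin--Lions argument yields strong local $L^2$ convergence of $u^\eps$ and of $\rho^\eps u^\eps$, which suffices to identify the limit of $\rho^\eps u^\eps\otimes u^\eps$ as $\rho\,u\otimes u$. The distributional formulation in Definition~\ref{def:lerayhopfsol} then passes to the limit, and the energy inequality \eqref{eq:EE} follows from weak lower semicontinuity of the dissipation together with convergence of the initial energy. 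The time-continuity $\rho\in C_{w^*}([0,\infty);L^\infty)$ is the observation recalled from \cite{CrippaPhDThesis}.

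For the ``furthermore'' part, the key point is that the mass equation is a linear transport equation driven by the divergence-free field $u$ with $\nabla u\in L^2_{t,x}$, so the DiPerna--Lions theory applies. Concretely, $\rho$ is a renormalized solution: for every $\beta\in C^1(\R)\cap W^{1,\infty}(\R)$ one has $\partial_t\beta(\rho)+\dive(\beta(\rho)u)=0$ in $\mathcal D'$. Equivalently, the associated regular Lagrangian flow $X(t,\cdot)$ exists, is a.e. invertible, and---because $\dive u=0$---preserves the Lebesgue measure, with $\rho(t,\cdot)=\rho_0\circ X(t,\cdot)^{-1}$. Hence for any levels $a\le b$ the set $\{a\le\rho(t,\cdot)\le b\}$ is the image under the measure-preserving map $X(t,\cdot)$ of $\{a\le\rho_0\le b\}$, so its Lebesgue measure equals $|\{a\le\rho_0\le b\}|$ and is independent of $t$. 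Renormalizing with smooth approximations of $\mathbbm 1_{[a,b]}$ and passing to the limit gives the same conclusion directly.

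The main obstacle is the justification of the renormalization and measure-preservation for a velocity field that is merely $L^2_tH^1_x$ rather than Lipschitz: one must invoke the DiPerna--Lions commutator estimate to legitimize the chain rule for $\beta(\rho)$, and one must account for the fact that, since $\rho$ need not decay at infinity, the level-set measures may be infinite for levels inside $[c_0,C_0]$---which is why I prefer the flow-map formulation, where the equality of (possibly infinite) measures is transparent. In the existence part, the corresponding difficulty is the strong compactness needed to handle $\rho^\eps u^\eps\otimes u^\eps$, which cannot be obtained from the weak bounds alone.
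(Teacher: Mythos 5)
First, a point of reference: the paper does not prove this statement at all --- it is recalled verbatim from \cite[Theorem 2.1]{Lions1996} as a citation, so there is no in-paper proof to match. The closest in-paper material is Section \ref{sec:vac}, and your treatment of the ``furthermore'' part is essentially that argument (Proposition \ref{prop:exunicontieq} and Theorem \ref{thm:novacuum}): DiPerna--Lions renormalization, the regular Lagrangian flow of the divergence-free field, measure preservation from $\dive u=0$, and the representation $\rho(t,\cdot)=\rho_0\circ X(t,\cdot)^{-1}$, which handles levels whose sublevel sets have infinite measure. That part is sound, with one silent hypothesis: the DiPerna--Lions flow theory requires the growth condition in \eqref{eq:condisonu}, i.e. $u/(1+|x|)\in L^1(0,T;L^1+L^\infty)$. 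In your construction this is harmless --- the approximate densities satisfy $\rho^\eps\ge c_0$, so the limit density does too, hence $u\in L^\infty(0,T;L^2(\R^d))$ and $L^2\subset L^1+L^\infty$ --- but it must be said (this is exactly the role of Lemma \ref{lem:decomptimedep} in the paper, where the bound $\rho\geq c_0$ is not yet available).

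The genuine gap is in the existence part, at the very step you flag as the main obstacle. The claim that controlling $\partial_t(\rho^\eps u^\eps)$ in a negative-order space and combining it with the $H^1$ bound on $u^\eps$ lets ``an Aubin--Lions argument'' produce strong local $L^2$ convergence of $u^\eps$ and of $\rho^\eps u^\eps$ does not hold as stated: Aubin--Lions needs time-derivative control and spatial compactness of the \emph{same} sequence. For $u^\eps$ you have no control on $\partial_t u^\eps$ (one cannot divide the momentum equation by $\rho^\eps$, since $\rho^{-\eps}\Delta u^\eps$ and $\rho^{-\eps}\nabla P^\eps$ would be products of a merely bounded function with distributions of negative order), while for $\rho^\eps u^\eps$ you have no spatial smoothness, since $\rho^\eps$ is only $L^\infty$. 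The actual mechanism in \cite{Lions1996} is different: (i) strong compactness of the densities, $\rho^\eps\to\rho$ in $C([0,T];L^p_{\loc})$ for $p<\infty$, coming from the DiPerna--Lions stability/renormalization theory \cite{DiPernaLions1989}, and (ii) a compensated-compactness lemma (time regularity of one factor, space regularity of the other) that identifies the weak limits of $\rho^\eps u^\eps$, then of $\rho^\eps|u^\eps|^2$, and finally of $\rho^\eps u^\eps\otimes u^\eps$; strong $L^2$ convergence of $\sqrt{\rho^\eps}u^\eps$ (and then of $u^\eps$, using $\rho^\eps \ge c_0$ together with the strong convergence of $\rho^\eps$) is an \emph{output} of this scheme, not an input. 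A second, more minor, defect: regularizing the mass equation by $\eps\Delta\rho$ breaks the energy balance --- testing the momentum equation with $u^\eps$ produces the extra term
\begin{align*}
  -\,\eps\int_{\R^d}\nabla\rho^\eps\cdot(\nabla u^\eps)^{T}u^\eps\dd x,
\end{align*}
which is not sign-definite and is not controlled by your uniform bounds (the parabolic estimate only gives $\sqrt{\eps}\,\nabla\rho^\eps\in L^2$). Lions' construction avoids this by regularizations that preserve the transport structure of the mass equation; if you insist on artificial diffusion, it must be compensated in the momentum equation.
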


\smallbreak
Regarding the notion of strong solutions, we rely on the results obtained 
by Paicu, Zhang and Zhang in \cite[Theorem 1.1]{PaicuZhangZhang2013} and Chen, Zhang and Zhao in \cite{ChenZhangZhao2016}. Before recalling their results, we introduce the 
following quantities: for $\eta> d/2-1$ and $\sigma(t)=\min\{1,t\}$,
\begin{align*}
    & A^{\eta}_1(t;\rho, u) :=\frac{1}{2} \sup_{s \in [0,t]} \sigma(s)^{1-\eta}\int_{\R^d} |\nabla u(s,x)|^2 \dd x +
    \int_0^t \sigma(s)^{1-\eta} \int_{\R^d} \rho |\partial_t u|^2+|\nabla^2 u|^2 + |\nabla P|^2 \dd x \dd s,\\
    & A^\eta_2(t;\rho,u) := \frac{1}{2} \sup_{s \in [0,t]} \sigma(s)^{2-\eta} \int_{\R^d} \rho |\partial_t u|^2+|\nabla^2 u|^2 
    + |\nabla P|^2 \dd x + \int_0^t\sigma(s)^{2-\eta}\int_{\R^d}  |\partial_t \nabla u|^2 \dd x \dd s,
\end{align*}
and 
\begin{equation*}
    \mathcal{S}^\eta(0,T) := \left\{ (\rho, u) \; \middle| \; \text{ for every } t \in [0, T), \text{ there exists a $C>0$ s.t. } A_1^{\eta}(t; \rho, u), A_2^{\eta}(t; \rho, u) \leq C \right\}.
\end{equation*}

Combining the results obtained in \cite{PaicuZhangZhang2013} and \cite{ChenZhangZhao2016} leads to the following statement.
\begin{theorem}[\cite{PaicuZhangZhang2013,ChenZhangZhao2016}]\label{thm:PZZ}
   Let $(\rho_0,u_0)$ as in \eqref{eq:initialreg}.
    \begin{enumerate}
        \item Let $d=2$ and $\eta > 0$. For $u_0 \in H^\eta(\R^2)$, the system \eqref{eq:ns} admits a unique global-in-time solution $(\rho,u)
        \in \mathcal{S}^\eta(0,\infty).$ 
        \item Let $d=3$ and $\eta=1$. For $u_0 \in H^1(\R^3)$, the system \eqref{eq:ns} admits a unique 
        local-in-time solution $(\rho,u) \in \mathcal{S}^1(0,T^*)$ for the maximal time $T^*=T^*(\norm{u_0}_1)>0$. Furthermore, there exists an absolute constant $\eps_*>0$
        such that, for every $u_0 \in H^1(\R^3)$ satisfying the smallness condition
        \begin{align*}
            \norm{u_0}_2^{1/2} \norm{\nabla u_0}_2^{1/2} \leq \eps_*,
        \end{align*}
        we have $T^*=\infty.$
        \item Let $d=3$ and $\eta > 1/2$. There exists an absolute constant $\eps_*>0$ such that, for every $u_0 \in 
        H^\eta(\R^3)$ satisfying
        \begin{align*}
            \norm{u_0}_{\dot{H}^{1/2}} \leq \eps_*,
        \end{align*}
       the system \eqref{eq:ns} admits a unique global-in-time solution $(\rho,u) \in \mathcal{S}^\eta(0,\infty).$
    \end{enumerate}
    In each case, the solution $(\rho,u)$ satisfies the energy equality: 
    \begin{equation*}
        A_0(t;\rho, u) :=\ \int_{\R^d} \rho(t) |u(t)|^2 \dd x + \nu \int_0^t \int_{\R^d} |\nabla u(s)|^2 \dd x \dd s 
        = \int_{\R^d} \rho_0|u_0|^2 \dd x,
    \end{equation*} for every $t\in (0,T^*)$, where $T^*$ denotes the maximal time of existence of the solution.

    Moreover, the density stays away from vacuum: there exist $c_0,C_0>0$ such that for almost every $(t,x) \in [0, T^*)\times \R^d$,
    \begin{align}\label{eq:strongwawayvacuum}
        c_0\leq \rho(t, x) \leq C_0,
    \end{align}
 and, for every $t\in [0,T^*]$, we have
    \begin{equation}\label{est:lip}
        \int_0^t \norm{\nabla u}_\infty \dd t < \infty.
    \end{equation}
\end{theorem}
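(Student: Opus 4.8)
The plan is to obtain existence, uniqueness, and membership in $\mathcal{S}^\eta$ directly from the constructions of Paicu--Zhang--Zhang \cite{PaicuZhangZhang2013} and Chen--Zhang--Zhao \cite{ChenZhangZhao2016}: in each of the three regimes the cited well-posedness theorems produce a unique solution whose a priori bounds are precisely the weighted quantities $A_1^\eta(t;\rho,u)$ and $A_2^\eta(t;\rho,u)$, so after matching the function spaces these assertions reduce to bookkeeping. The structural content to be verified separately is then threefold: the energy equality, the nonvacuum bound \eqref{eq:strongwawayvacuum}, and the Lipschitz bound \eqref{est:lip}. I treat these in turn, reserving the bulk of the effort for the last.

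For the energy equality, the regularity encoded in $\mathcal{S}^\eta$ --- in particular $\partial_t u,\nabla^2 u\in L^2_{\loc}((0,T^*);L^2(\R^d))$ with the weights near $t=0$ --- makes $u$ itself an admissible test function in the momentum equation on each interval $[\tau,t]\subset(0,T^*)$. Testing with $u$, using $\div u=0$ and the mass equation to rewrite $\int_{\R^d}\rho\,u\cdot\partial_t u$ and to cancel the convective term, yields $A_0(t;\rho,u)=A_0(\tau;\rho,u)$; letting $\tau\downarrow 0$ and using that $t\mapsto\sqrt{\rho}\,u(t)$ attains its datum $\sqrt{\rho_0}\,u_0$ in $L^2$ upgrades the inequality \eqref{eq:EE} to the stated equality.

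For the nonvacuum bound I would invoke Theorem \ref{thm:existweaksol}: the density solves the transport equation with divergence-free field $u\in L^2((0,\infty);H^1)$, so the Lebesgue measure of $\{x:c_0'\le\rho(t,x)\le C_0'\}$ is independent of $t$ for every $0\le c_0'\le C_0'<\infty$. Applying this with $c_0'=C_0+\delta$ and $C_0'\to\infty$ shows that $\{\rho(t)>C_0\}$ is null, while applying it with $c_0'=0$ and $C_0'=c_0-\delta$ shows that $\{\rho(t)<c_0\}$ is null, for every $\delta>0$; hence \eqref{eq:strongwawayvacuum}. Alternatively, once \eqref{est:lip} is available, the divergence-free velocity generates a volume-preserving flow along which $\rho$ is transported, so the essential range of $\rho(t)$ coincides with that of $\rho_0$.

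The main obstacle is \eqref{est:lip}. Here I would split $[0,t]=[0,\delta]\cup[\delta,t]$. On $[\delta,t]$ one has $\sigma\equiv 1$, so $A_1^\eta$ and $A_2^\eta$ give uniform $H^2$-type control and the contribution is finite. On $[0,\delta]$ the point is to dominate $\norm{\nabla u(s)}_\infty$ by $s^{-\gamma}$ with $\gamma<1$. To this end I would rewrite the momentum equation as the stationary Stokes system $-\nu\Delta u+\nabla P=-\rho(\partial_t u+u\cdot\nabla u)=:f$ with $\div u=0$, and apply $L^q$-Stokes regularity for some $q>d$ together with the embedding $W^{2,q}\hookrightarrow W^{1,\infty}$ to obtain $\norm{\nabla u}_\infty\lesssim\norm{\nabla u}_2+\norm{\nabla^2 u}_q\lesssim\norm{\nabla u}_2+\norm{f}_q$. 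The term $\norm{f}_q$ is then estimated by Gagliardo--Nirenberg interpolation of $\norm{\partial_t u}_q$ and of the convective term between the $L^2$-based quantities controlled by $A_1^\eta$ and $A_2^\eta$ (using the lower density bound to pass from $\sqrt{\rho}\,\partial_t u$ to $\partial_t u$). Substituting the weighted rates $\norm{\nabla u(s)}_2\lesssim s^{(\eta-1)/2}$ and $\norm{\nabla^2 u(s)}_2,\norm{\partial_t u(s)}_2\lesssim s^{(\eta-2)/2}$, together with the integrated control of $\nabla\partial_t u$, should yield $\norm{\nabla u(s)}_\infty\lesssim s^{-\gamma}$. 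The delicate part --- where I expect the real work to lie --- is choosing the interpolation exponents so that $\gamma<1$ throughout the admissible ranges of $\eta$, and understanding why the smallness and maximal-time hypotheses in $d=3$ are exactly what keeps the estimate closed.
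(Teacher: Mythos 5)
Your reduction of the three well-posedness assertions to the cited theorems matches the paper's treatment exactly: Theorem \ref{thm:PZZ} is recalled from \cite{PaicuZhangZhang2013} and \cite{ChenZhangZhao2016}, not reproved, and even the one ingredient the paper singles out, the Lipschitz bound \eqref{est:lip}, is handled by citation (Lemma \ref{lem:adddecayprop} refers to \cite[Lemma 3.2]{PaicuZhangZhang2013}). Your energy-equality argument (testing with $u$, using the mass equation) is the standard one and is unproblematic at this regularity. Your Stokes-regularity/Gagliardo--Nirenberg sketch for \eqref{est:lip} is in substance the proof of \cite[Lemma 3.2]{PaicuZhangZhang2013}, and the exponent-chasing you defer does close: in $d=2$, for instance, $L^q$ Stokes regularity with $q\in\left(2,\tfrac{2}{1-\eta}\right)$ together with the weighted bounds on $\dot u$ and $\nabla\dot u$ gives an integrable singularity near $t=0$ for every $\eta>0$. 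But note that you have left open precisely the step where the paper cites, so as written this part is a plan rather than a proof.

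The genuine gap is your primary route to \eqref{eq:strongwawayvacuum}. Theorem \ref{thm:existweaksol} is an \emph{existence} statement: Lions constructs \emph{one} weak solution whose density level sets have time-independent measure. It does not assert that every solution of the transport equation with a divergence-free $L^2_tH^1_x$ drift has this property, and you cannot apply it to the strong solution at hand without knowing that this solution coincides with Lions'; that identification is exactly the weak-strong uniqueness problem the paper is trying to solve, so the argument is circular. The remark following Theorem \ref{thm:novacuum0} stresses precisely this distinction between ``a particular weak solution'' and ``any weak solution''. The level-set property you want does hold for the strong solution, but it must be obtained either from the DiPerna--Lions renormalization/flow theory (which needs, beyond $u\in L^2(0,T;H^1)$, the growth condition $u/(1+|x|)\in L^1(0,T;L^1(\R^d)+L^\infty(\R^d))$ verified in Lemma \ref{lem:decomptimedep}), or --- simpler here --- from your own fallback: once \eqref{est:lip} is known, the Cauchy--Lipschitz flow of $u$ exists, is measure preserving since $\dive u=0$, and $\rho(t)=\rho_0\circ X^{-1}(t)$, so the essential bounds of $\rho_0$ propagate. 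That fallback is the correct argument (and the one underlying \cite{PaicuZhangZhang2013} and the paper's Section \ref{sec:vac}), but it forces you to prove \eqref{est:lip} \emph{before} the vacuum bound, so the logical order of your steps must be reversed; in particular, any use of the lower density bound inside your proof of \eqref{est:lip} (you invoke it to pass from $\sqrt{\rho}\,\partial_t u$ to $\partial_t u$) must instead come from the transport structure directly, e.g.\ from the maximum principle for the density along the Galerkin/approximation scheme of the cited construction, not from the vacuum statement you are about to prove.
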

Motivated by Theorem \ref{thm:PZZ}, our definition of strong solutions is as follows.
\begin{deff}[Strong solutions]\label{def:strongsol}
Let $0< T \leq \infty$ and $(\rho_0,u_0)$ satisfy \eqref{eq:initialreg}. We say that a Leray-Hopf solution $(\rho,u)$ 
of \eqref{eq:ns} is a strong solution of \eqref{eq:ns} on $(0,T)$ if there exists an $\eta > d/2-1$ such that 
$(\rho, u) \in \mathcal{S}^\eta(0,T).$ Furthermore, we define the maximal time $0<T^*\leq \infty$ of the strong solution $(\rho,u)$ by
\begin{align*}
    T^* := \sup\{ T \in (0,\infty) | (\rho, u) \in \mathcal{S}^\eta(0,T) \}.
\end{align*}
\end{deff}

\subsection{Main results}
We are now ready to state our main result, which aims to close the gap between the Leray-Hopf weak solutions 
from Definition \ref{def:lerayhopfsol} and the strong solutions from Definition \ref{def:strongsol}.

\begin{theorem}[Weak-strong uniqueness]\label{thm:weakstrong1}
    Let $d=2,3$ and $(\rho_1,u_1)$ be a \textit{Leray-Hopf} weak 
    solution of \eqref{eq:ns} associated with the initial data $(\rho_0,u_0)$ satisfying \eqref{eq:initialreg}. If there exists $(\rho_2,u_2)$ a strong solution of \eqref{eq:ns} on $(0,T^*)$ with the same initial data $(\rho_0,u_0)$, then $(\rho_2,u_2)=(\rho_1,u_1)$ for almost every $(t,x)\in [0,T^*)\times \R^d$, where $0<T^*\leq\infty$ denotes the maximal time of existence of the strong solution.
\end{theorem}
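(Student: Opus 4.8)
The plan is to run a relative energy (relative entropy) argument in which the strong solution $(\rho_2,u_2)$ serves as a test profile against the Leray-Hopf solution $(\rho_1,u_1)$, and then to close the resulting estimate by Gronwall. The quantity to track is the density-weighted velocity discrepancy
\[
\mathcal{E}(t) := \tfrac12\int_{\R^d}\rho_1(t)\,\abs{u_1(t)-u_2(t)}^2\dx .
\]
Before anything else I would establish the preliminary fact highlighted in the abstract, namely that $\rho_1$ stays uniformly away from vacuum, $c_0\le\rho_1(t,x)\le C_0$ for a.e. $(t,x)$. This follows from the measure-conservation statement in Theorem \ref{thm:existweaksol} applied to intervals disjoint from $[c_0,C_0]$: for any $\delta>0$ the set $\{0\le\rho_1(t)\le c_0-\delta\}$ has zero measure at $t=0$ (since $\rho_0\ge c_0$ a.e.), hence for all $t$, and likewise $\{C_0+\delta\le\rho_1(t)\le\norm{\rho_1}_\infty\}$ is null; letting $\delta\to0$ gives the bound. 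This lower bound is indispensable, since only through $\rho_1\ge c_0$ does $\mathcal{E}(t)=0$ translate into $u_1=u_2$.

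The core computation expands $\mathcal{E}=\tfrac12\int\rho_1\abs{u_1}^2-\int\rho_1 u_1\cdot u_2+\tfrac12\int\rho_1\abs{u_2}^2$ and treats the three pieces separately. For the first I would invoke the energy inequality \eqref{eq:EE} for $(\rho_1,u_1)$ and the energy equality for $(\rho_2,u_2)$. For the mixed term $\int\rho_1 u_1\cdot u_2$ I would use the weak momentum formulation in Definition \ref{def:lerayhopfsol}(ii) with $u_2$ as a test field; this is legitimate precisely because $(\rho_2,u_2)\in\mathcal{S}^\eta(0,T^*)$ supplies the regularity required (after a standard mollification in time). For the last term I would test the weak transport equation for $\rho_1$ against $\tfrac12\abs{u_2}^2$ and then substitute the strong momentum equation $\rho_2\partial_t u_2=-\rho_2 u_2\cdot\nabla u_2+\nu\Delta u_2-\nabla P_2$. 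Collecting everything yields a relative energy inequality of the schematic form
\[
\mathcal{E}(t)+\nu\int_0^t\int_{\R^d}\abs{\nabla(u_1-u_2)}^2\dx\ds\;\le\;\mathcal{E}(0)+\int_0^t\mathcal{R}(s)\ds ,
\]
where $\mathcal{R}$ splits into a convective remainder and a density remainder.

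The convective remainder is essentially $\int\rho_1\,(u_1-u_2)\otimes(u_1-u_2):\nabla u_2$, bounded by $\norm{\nabla u_2(s)}_\infty\,\mathcal{E}(s)$; here the Lipschitz control \eqref{est:lip} of the strong solution is exactly what renders this term Gronwall-admissible. The genuinely delicate part, and the main obstacle, is the density remainder: substituting the strong equation forces factors $\varrho:=\rho_1-\rho_2$ paired against second-order strong quantities such as $\partial_t u_2,\nabla^2u_2,\nabla P_2$. Since $\varrho$ is merely bounded and not differentiable, these products cannot be estimated in $L^2$; instead I would place $\varrho$ in a negative space and the smooth factor in its dual, using the new $W^{-1,p}$ stability estimate announced in the abstract. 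Concretely, subtracting the two transport equations and using $\div u_i=0$ gives $\partial_t\varrho+u_1\cdot\nabla\varrho=-\div\bigl(\rho_2(u_1-u_2)\bigr)$, whose source is a divergence of an $L^p$ field; differentiating $\norm{\varrho(t)}_{W^{-1,p}}$ and moving the derivative off $\rho_2$ should yield
\[
\norm{\varrho(t)}_{W^{-1,p}}\le C\int_0^t\norm{u_1-u_2}_{L^p}\,\bigl(1+\norm{\nabla u_2}_\infty\bigr)\ds ,
\]
with $p$ chosen so that the $\mathcal{S}^\eta$-regularity of $(\rho_2,u_2)$ controls the dual norms of the second-order quantities (in $3d$ this is governed by $H^2\hookrightarrow W^{1,6}$). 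The factor $\norm{u_1-u_2}_{L^p}$ is in turn controlled by $\mathcal{E}$ and the dissipation through $\rho_1\ge c_0$ and interpolation.

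Finally, setting $y(t):=\mathcal{E}(t)+\norm{\varrho(t)}_{W^{-1,p}}^2$ and combining the two estimates produces $y(t)\le\int_0^t g(s)\,y(s)\ds$ for a weight $g$ built from $\norm{\nabla u_2}_\infty$ and the $\mathcal{S}^\eta$-norms; the time-singular weights $\sigma(s)^{1-\eta}$ must be checked to remain integrable near $s=0$, which is where $\eta>d/2-1$ enters. Since the two solutions share initial data, $y(0)=0$, so Gronwall's lemma forces $y\equiv0$ on $[0,T^*)$. Hence $\rho_1=\rho_2$ a.e., and because $\rho_1\ge c_0>0$ the vanishing of $\mathcal{E}$ gives $u_1=u_2$ a.e., which is the assertion. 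I expect the difficulty to lie neither in the convective term nor in the Gronwall closure but in the density remainder: reconciling the mere boundedness of $\varrho$ with the second-order strong quantities it multiplies, which is precisely what the $W^{-1,p}$ stability estimate is designed to overcome, while respecting the time weights degenerating at $t=0$.
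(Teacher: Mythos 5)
Your overall skeleton—no-vacuum as a preliminary, relative energy with $u_2$ tested against the weak formulation, a Lipschitz bound for the convective remainder, a negative-Sobolev stability estimate for $\delta\rho$, and Grönwall—is indeed the paper's architecture (Lemma \ref{Lem:AdminTest}, Lemma \ref{lem:badgronwall}, Proposition \ref{lem:wminus4stab}). But two of your steps have genuine gaps. First, the no-vacuum step fails as you argue it: Theorem \ref{thm:existweaksol} is an existence theorem, and the conservation of the measure of the level sets \eqref{eq:measuredensity} is established there for the \emph{particular} weak solution that Lions constructs, not for an arbitrary Leray--Hopf solution. Since $(\rho_1,u_1)$ in Theorem \ref{thm:weakstrong1} is a given, arbitrary weak solution in the sense of Definition \ref{def:lerayhopfsol}, you cannot invoke that measure-conservation property for it; doing so assumes what has to be proved. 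This is exactly why the paper proves Theorem \ref{thm:novacuum0} in Section \ref{sec:vac}: one first verifies (Lemma \ref{lem:decomptimedep}) that $u_1$ satisfies the growth condition \eqref{eq:decay18}, and then the DiPerna--Lions theory (Proposition \ref{prop:exunicontieq}) gives uniqueness of the bounded weak solution of the transport equation with velocity $u_1$ together with the representation $\rho_1(t,\cdot)=\rho_0(X^{-1}(t,\cdot))$ along the regular Lagrangian flow of $u_1$, from which $c_0\leq\rho_1\leq C_0$ follows. Without this, the lower bound that you yourself call indispensable is unjustified.

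The second gap is in your density stability estimate, and it comes from the decomposition you chose. Writing $\partial_t\varrho+u_1\cdot\nabla\varrho=-\dive\bigl(\rho_2(u_1-u_2)\bigr)$ transports $\varrho$ by the \emph{weak} velocity $u_1$. Negative Sobolev norms are not propagated by transport along a merely $L^2_tH^1_x$ field: by duality, estimating $\norm{\varrho(t)}_{W^{-1,p}}$ requires controlling the $W^{1,q}$ norms of test functions dragged by the flow of $u_1$, i.e.\ a factor of the type $\exp\bigl(\int_0^t\norm{\nabla u_1}_\infty\dd s\bigr)$, which is unavailable for a Leray--Hopf solution. Your claimed inequality, whose right-hand side involves only $\norm{\nabla u_2}_\infty$, silently discards this transport contribution. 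The paper uses the opposite splitting, $\partial_t\delta\rho+\dive(\delta\rho\, u_2)=-\dive(\rho_1\delta u)$, precisely so that the transporting field is the strong velocity, which is Lipschitz by \eqref{est:lip}; the bi-Lipschitz flow of $u_2$ (Lemma \ref{lem:propertiesflow}) then yields a Lagrangian representation formula for a mollification of $\delta\rho$, and after commutator estimates, Fubini, and an integration by parts along the flow, one obtains Proposition \ref{lem:wminus4stab}, in which $\norm{\rho_1\delta u}_p$ is paired against $\norm{\nabla(\dot u_2\cdot\delta u)}_q$ with the time weights $\sigma(s)^{1/2+\beta}$ and $\tau^{-2\beta}$ that make the $\mathcal{S}^\eta$ bounds on $\dot u_2$ usable near $s=0$. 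Your estimate could be repaired by swapping the roles of $u_1$ and $u_2$ in the decomposition and arguing Lagrangianly as the paper does, but as written this step fails.
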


\begin{remark}\label{rq:RQ1}
Compared to previous results on weak-strong uniqueness for \eqref{eq:ns}, our result covers larger classes of strong 
solutions. More precisely, the existence of our strong solutions holds for $u_{0}\in H^{\eta}$ with $\eta>\frac{d}{2}-1$ 
whereas $\eta>2$ (for $d=3$) and $\eta\geq 1$ are needed in \cite{Germain2008} and \cite{PrangeTan2023}, respectively. 

In addition, our result is 
valid in the three-dimensional setting, whereas \cite[Proposition 4.2]{PrangeTan2023} is restricted to weak solutions 
lying additionally in critical spaces: $u_1\in L^4((0,T);L^6(\R^3))$. 
Our improvements can be attributed to the different approach we employ: the relative energy method, and the fact that we 
deal with initial densities far from vacuum, in contrast with \cite{Germain2008,PrangeTan2023} where nonnegative initial 
densities are considered. 
\end{remark}
\begin{remark}\label{rq:DW}
In \cite{DanchinWang2022}, the authors suggest a way to derive a weak-strong uniqueness result.
For the strong solutions we consider, it is not direct that they verify the conditions given in \cite[Theorem 1.7]{DanchinWang2022} to derive a weak-strong uniqueness result, see Section \ref{sec:compare} for more details.  Nevertheless, it is worth highlighting that employing \cite[Theorem 1.7]{DanchinWang2022} would result in a weak-strong uniqueness result in $3d$ allowing initial vacuum states.
\end{remark}

The absence of vacuum formation is a key point in our analysis and as it is a result of independent interest, we state it 
as follows.

\begin{theorem}[No Vacuum Formation]\label{thm:novacuum0}
    Let \((\rho_0,u_0) \in L^\infty(\R^d) \times L_\sigma^2(\R^d;\R^d)\) and assume that there exists two constants $c_0,C_0$ such that, for almost every $x\in\R^d$,  \[ 0 < c_0 \leq \rho_0(x) 
    \leq C_0. \] Let \((\rho,u)\) be a weak solution of \eqref{eq:ns} in the sense of Definition 
    \eqref{def:lerayhopfsol} with the initial data $(\rho_0,u_0)$.  Then, for almost every \((t,x) \in [0,\infty) 
    \times \R^d\), \[ 0 < c_0 \leq \rho(t,x) \leq C_0. \]
\end{theorem}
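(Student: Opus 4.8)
The plan is to recognize that, because the velocity is divergence free, the first equation of \eqref{eq:ns} is a pure transport equation $\partial_t\rho + u\cdot\nabla\rho = 0$, and to exploit the fact that such equations preserve the distribution function of $\rho$. Concretely, it suffices to prove that for a.e. $t$ the push-forward of the Lebesgue measure under $\rho(t,\cdot)$ coincides with that under $\rho_0$; applying this to the Borel set $[0,c_0)\cup(C_0,\infty)$, which is $\rho_0$-null by hypothesis, yields $\abs{\{x:\rho(t,x)<c_0\}}=\abs{\{x:\rho(t,x)>C_0\}}=0$, which is exactly the claim. This is the same mechanism that underlies the level-set conservation stated in Theorem \ref{thm:existweaksol}; the point here is to run it for an arbitrary Leray--Hopf solution rather than only for the one produced by the existence scheme.

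The first step is to check that the velocity of a Leray--Hopf solution lies in the DiPerna--Lions class: from Definition \ref{def:lerayhopfsol} we have $\dive u=0$, $\nabla u\in L^2((0,\infty);L^2(\R^d))$ and $u\in L^2_{\loc}$, so $u\in L^1_{\loc}((0,\infty);W^{1,1}_{\loc}(\R^d))$ with $\dive u\in L^\infty$. Hence $\rho\in L^\infty$ is a renormalized solution of the continuity equation and, since $\dive u=0$, the associated regular Lagrangian flow $X(t,\cdot)$ preserves the Lebesgue measure and satisfies $\rho(t,X(t,x))=\rho_0(x)$ for a.e. $x$; measure preservation then gives the equidistribution of $\rho(t,\cdot)$ and $\rho_0$. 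A route that avoids quoting the flow is to mollify $u$ into smooth divergence-free fields $u_n$ (mollification commutes with the divergence), solve the transport equation along the smooth, measure-preserving diffeomorphisms generated by $u_n$ to obtain densities $\rho_n$ with $c_0\le\rho_n\le C_0$ by construction, and then pass to the limit: the stability theory for renormalized solutions gives $\rho_n\to\rho$, and the two-sided bound survives the weak-$*$ limit. I would emphasize that this transport structure is essential for the lower bound, since an energy-type argument degenerates near vacuum (one would have to divide by $\sqrt{\rho}$), whereas the flow never sees $1/\rho$.

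The hard part is the behavior at spatial infinity, i.e. guaranteeing that mass is not created or lost through large spheres so that the flow is globally well defined and the equidistribution genuinely holds on all of $\R^d$. The relevant integrability condition is of the type $\frac{\abs{u}}{1+\abs{x}}\in L^1_{\loc}((0,\infty);L^1(\R^d)+L^\infty(\R^d))$; in dimension $d=3$ I would obtain it from Hardy's inequality $\int_{\R^3}\abs{u}^2\abs{x}^{-2}\dd x\le C\int_{\R^3}\abs{\nabla u}^2\dd x$ together with $u\in L^2_{\loc}$ near the origin, while $d=2$ is more delicate because $\dot H^1(\R^2)$ enjoys no comparable embedding and must be handled separately. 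That this global control cannot be dispensed with is visible in the naive alternative: testing the renormalized identity for $(\rho-C_0)_+^2$ against a cut-off only yields a flux term on an annulus that, with the energy bound $\sqrt{\rho}\,u\in L^\infty_tL^2_x$ alone, grows like $R^{d/2-1}$ and fails to vanish for $d=3$; and closing it instead by a differential inequality produces $F'\le C\sqrt{F}$ with $F(0)=0$, which does not force $F\equiv0$. This is precisely why one must use the exact conservation coming from the transport/flow structure rather than a soft energy estimate.
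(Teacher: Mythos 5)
Your overall strategy is the same as the paper's: verify that the velocity field of a Leray--Hopf solution lies in the DiPerna--Lions class \eqref{eq:condisonu}, invoke uniqueness of bounded weak solutions of the continuity equation together with the representation $\rho(t,\cdot)=\rho_0(X^{-1}(t,\cdot))$ along the measure-preserving regular Lagrangian flow, and conclude that the two-sided bound $c_0\leq\rho\leq C_0$ (indeed the whole distribution function of $\rho_0$) is transported. You also correctly identify where the real work lies: the growth condition $\frac{u}{1+|x|}\in L^1_{\loc}((0,T);L^1(\R^d)+L^\infty(\R^d))$, without which neither uniqueness nor the flow representation is available; note that your alternative mollification/stability route does not bypass it either, since without uniqueness for the limiting field one cannot identify the limit of the approximate densities with the \emph{given} weak solution $\rho$.

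There is, however, a genuine gap exactly at that point: your argument for the growth condition covers only $d=3$. Hardy's inequality $\int_{\R^3}|u|^2|x|^{-2}\dd x\leq C\int_{\R^3}|\nabla u|^2\dd x$ is indeed available in $\R^3$ (and $L^2\subset L^1+L^\infty$ finishes that case), but it fails in $\R^2$, and you explicitly leave the two-dimensional case open (``must be handled separately'') even though the theorem is asserted for $d=2,3$. The paper closes this step in a dimension-independent way (Proposition \ref{prop:Ortner} and Lemma \ref{lem:decomptimedep}, after Ortner--S\"uli): write $u=v+w$ with $v=u\ast\varphi$ a spatial mollification, so that $\norm{\nabla v}_{L^\infty}\leq C\norm{\nabla u}_{L^2}$ by H\"older/Young, whence $v$ grows at most linearly and $v/(1+|x|)\in L^\infty$, while $w=u-u\ast\varphi$ satisfies $\norm{w}_{H^1}\leq C\norm{\nabla u}_{L^2}$ by Poincar\'e's inequality, so that $w\in L^2\subset L^1+L^\infty$; both bounds are integrable in time because $\nabla u\in L^2((0,T);L^2(\R^d))$. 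Replacing your Hardy step by this decomposition makes your proof complete in both dimensions and essentially identical to the paper's.
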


\begin{remark}
  In contrast with Theorem \ref{thm:existweaksol}, where Lions proved the absence of vacuum formation for a particular weak solution, Theorem \ref{thm:novacuum0} 
   shows that this property holds for any weak solution of \eqref{eq:ns} in the sense of Definition \ref{def:lerayhopfsol}.
\end{remark}

\subsection{Strategy of proof of Theorem \ref{thm:weakstrong1}}

Our proof is based on the relative energy method, which has multiple applications in fluid mechanics, as highlighted in \cite{Wiedemann2018} by Wiedemann. For applications to hyperbolic conservation laws see \cite{Dafermos2009} by Dafermos. We also borrow tools from the works of Li \cite{Li2017} and Danchin and Wang \cite{DanchinWang2022}. Let us briefly describe our approach. Let $(\rho_1,u_1,P_1)$ and $(\rho_2,u_2,P_2)$ be two regular solutions of \eqref{eq:ns} with respect to the same initial data $(\rho_0,u_0)$. While we assume the solutions to be regular enough here, a major part of our proof consists in showing, with approximation arguments, that the computations hold when $(\rho_1,u_1)$ is a weak solution and $(\rho_2,u_2)$ a strong solution. We define
\begin{align*}
    \delta \rho = \rho_1 - \rho_2, \quad \delta u = u_1 - u_2 \quad \textrm{ and}\quad \delta P =P_1 - P_2.
\end{align*}
The error unknown $(\delta \rho , \delta u, \delta P)$ satisfies the following equations 
\begin{align}\label{eq:nsdiffintro}
  \begin{cases}
   \partial_t \delta \rho + \dive( \delta \rho u_2) = - \dive (\rho_1 \delta u),\\
    \rho_1 \left( \partial_t \delta u + (u_1 \cdot \nabla) \delta u\right) - \Delta \delta u + \nabla \delta P =
    -\delta \rho \dot{u}_2 - \rho_1 (\delta u \cdot \nabla) u_2,\\
    \delta \rho(0)=0,\, \delta u(0) = 0,
    \end{cases}
\end{align}
where we used the material derivative notation
\begin{align*}
    \dot{u}_2 := \partial_t u_2 + (u_2 \cdot \nabla) u_2.
\end{align*}
Our proof is based on the relative energy method which boils down to bounding the relative energy:
\begin{align*}
    E_\rel(t):= \frac{1}{2} \int_{\R^d} \rho_1(t) \abs{\delta u(t)}^2 \dd x, \quad t>0.
\end{align*}
Multiplying the second equation of \eqref{eq:nsdiffintro} by $\delta u,$ for sufficiently regular solutions, and 
integrating in time and space, we obtain
\begin{align} \label{eq:RE}
    E_\rel(t) + \nu \int_0^t \int_{\R^d} |\nabla \delta u|^2 \dd x \dd s &= - \int_0^t \int_{\R^d} \delta \rho \dot{u}_2
    \cdot \delta u \dd x \dd s - \int_0^t \int_{\R^d} \rho_1 (\delta u \cdot \nabla) u_2 \cdot \delta u
    \dd x \dd s\\
    &=- \int_0^t \int_{\R^d} \delta \rho \dot{u}_2 \cdot \delta u \dd x \dd s 
    - \int_0^t \int_{\R^d} \rho_1 \delta u \otimes \delta u : \nabla u_2 \dd x \dd s. \nonumber
\end{align}
We emphasize here that obtaining \eqref{eq:RE} in our weak-strong setting is not straightforward. The difficulty is that it is not clear how to bound
\begin{align*}
    \partial_t u_1 \quad\text{and}\quad \partial_t (\rho_1 u_1),
\end{align*} in order to apply Lions–Magenes lemma.
In particular, applying the Leray projector to the equation only allows to recover
\begin{align*}
    \partial_t \mathbb{P}(\rho_1 u_1) \in L^{4/d}(0,T;\dot{H}^{-1}(\R^d)).
\end{align*}
Therefore, in our weak-strong framework, one cannot justify the following computations, which were used in \cite{DanchinWang2022,Li2017} to derive \eqref{eq:RE}, 
\begin{align*}
    \int_0^t \int_{\R^d} \rho_1 \partial_t \delta u \cdot \delta u \dd x \dd s 
    &= \int_0^t \int_{\R^d} \rho_1 \partial_t \frac{1}{2}|\delta u|^2 \dd x \dd s\\
    &= \int_{\R^d} \rho_1(t) \frac{1}{2}|\delta u(t)|^2 \dd x - \int_0^t \int_{\R^d} \partial_t \rho_1 
    \frac{1}{2}|\delta u|^2 \dd x \dd s\\
    &= \int_{\R^d} \rho_1(t) \frac{1}{2}|\delta u(t)|^2 \dd x - \int_0^t \int_{\R^d} \rho_1 
    (u_1 \cdot \nabla) \delta u \cdot \delta u \dd x \dd s.
\end{align*}
In Lemma \ref{lem:badgronwall}, we show that \eqref{eq:RE} holds for $(\rho_1,u_1)$ a Leray-Hopf solution, avoiding the above computations. To do so, employing approximation arguments, we establish in Section \ref{sec:admin} that the strong solutions are admissible test functions for both the weak formulation of the transport and momentum equation.

Once \eqref{eq:RE} at hand, we aim to control the right-hand side terms to apply \textit{Grönwall's inequality} and conclude $E_\rel \equiv 0.$ 
Under the condition that
\begin{align}\label{eq:lipschitzu2}
    \nabla u_2 \in L^1_\loc(0,\infty;L^\infty(\R^d)),
\end{align}
the estimate of the second r.h.s. of \eqref{eq:RE} is straightforward and we have
\begin{align*}
    - \int_0^t \int_{\R^d} \rho_1 \delta u \otimes \delta u : \nabla u_2 \dd x \dd s \leq
    \int_0^t \norm{\nabla u_2(t)}_\infty E_\rel(s) \dd s.
\end{align*}
Controlling the term 
\begin{align}\label{eq:productintro}
    \int_0^t \int_{\R^d} \delta \rho \dot{u}_2 \cdot \delta u \dd x \dd s
\end{align}
is more difficult as it is not \textit{quadratic in $\sqrt{\rho_1} \delta u$}, which is problematic to apply a Grönwall argument. 

To control this term, we derive stability estimates for $\delta \rho$ in well-chosen norms. The key observation is that the strong solution $u_2$ satisfies the Lipschitz 
bound \eqref{eq:lipschitzu2}, which implies that there exists a flow $X \colon [0,\infty) \times \R^d 
\to \R^d$ generated by $u_2$. Assuming that $\rho_1 \delta u$ is smooth, from the first equation of 
\eqref{eq:nsdiffintro}, we have the following representation formula, see \cite[Proposition 6]{AmbrosioCrippa2014},
\begin{align}\label{eq:formulatransintro}
    \delta \rho(t,X(t,x)) = - \int_0^t \dive\left((\rho_1 \delta u)(\tau,X(\tau,x)
    \right) \dd \tau.
\end{align}
Using \eqref{eq:formulatransintro}, approximation arguments
 and commutator estimates, for $p,q>1$ such that $1/p+1/q=1$, we bound \eqref{eq:productintro} as
\begin{align}\label{eq:productestimateintro}
    - \int_0^t \int_{\R^d} \delta \rho \dot{u}_2 \cdot \delta u \dd x \dd s &\leq  C \int_0^t \int_0^s \int_{\R^d}(\dive\left(\rho_1 \delta u))(\tau,X(\tau,x)
    \right)\dot{u}_2\cdot \delta u  \dd\tau \dd x\dd s
    \\&\leq 
    C \int_0^t \int_0^s \norm{(\rho_1 \delta u)(\tau)}_p \dd \tau \norm{\nabla(\dot{u}_2(s) \cdot \delta u(s))}_q \dd s, \nonumber
\end{align}
where $C$ is a constant depending on the flow and its derivatives. In some sense, \eqref{eq:productestimateintro} can be understood as a $W^{-1,p}$ stability estimate for $\delta\rho$.

Then, using \textit{Gagliardo-Nirenberg inequality}, we derive $L^p$ bounds for $\rho_1 \delta u$, which ensures that
\eqref{eq:productintro} can be bounded by
\begin{align*}
    C \int_0^t f(s) \norm{\delta u(s) }_2^2 \dd s,
\end{align*}
where the constant $C$ and the function $f$ can be controlled by norms of the strong solution $u_2$. Since we exclude the
vacuum formation in Section \ref{sec:vac}, for every $s>0$, we obtain
\begin{align*}
    \frac{1}{C} \norm{\delta u(s)}_2^2 \leq \norm{\sqrt{\rho_1(s)} \delta u(s)}_2^2,
\end{align*}
which allows us to conclude the weak-strong uniqueness using \textit{Grönwall's inequality}.

\subsection{Comparison with existing literature}\label{sec:compare}

Let us briefly compare our approach with existing literature. A widely used technique to show the uniqueness of solutions 
of \eqref{eq:ns} is the transition to Lagrangian coordinates, see 
\cite{AbidiGuiZhang2023,ChenZhangZhao2016,Danchin2023,DanchinMucha2012,PaicuZhangZhang2013}.  This gives 
a new system of equations which is equivalent to \eqref{eq:ns} under high regularity assumptions on the velocity fields 
$u_1$ and $u_2$, for instance, if $u_i$ satisfies
\begin{align*}
    \nabla u_i \in L^1_\loc(0,\infty;L^\infty(\R^d)) \quad \text{for} \quad i=1,2.
\end{align*}
Therefore, the approaches based on such a transformation can only provide uniqueness results for classes of solutions 
more regular than the one we deal with. Here, we rely on the so-called relative 
energy method to work in a low-regularity framework. This method allows us to show that: if the existence of strong solutions is 
known, then it is unique among a large class of weak solutions. 

As described above, the goal is to apply \textit{Grönwall's inequality} to the equation satisfied by the relative energy 
\eqref{eq:RE}. Similar approaches were already employed in \cite{Li2017} and \cite{DanchinWang2022}. Let us describe the 
main differences.

In \cite{Li2017}, the approach developed by Li cannot be used in our weak-strong setting as it is restricted to weakly 
differentiable initial densities. The reason for this 
is that the control of the product term in \eqref{eq:productintro} is done by deriving $L^{3/2}$ stability estimates for the transport
equation in \eqref{eq:nsdiffintro}, requiring a control on $\nabla \rho_0$. In \cite{DanchinWang2022}, Danchin and Wang derived an estimate for the transport equation in $\dot{H}^{-1}(\R^d)$. For 
$d=2$, this requires the control of
\begin{align}\label{eq:danchinwang2d}
    \int_0^t \left( \norm{t \dot{u}_2(t)}^2_\infty + \norm{t\nabla^2 \dot{u}_2(t)}^q_p \right) \dd t, \quad \frac{1}{p} +
    \frac{1}{q} = \frac{3}{2},
\end{align}
and, if $d=3$,
\begin{align}\label{eq:danchinwang3d}
    \int_0^t \left( \norm{t \dot{u}_2(t)}^2_\infty + \norm{t\nabla^2 \dot{u}_2(t)}_3 \right) \dd t,
\end{align}
quantities that are unlikely to be bounded for the strong solutions we are considering.

\subsection{Outline of the paper}
The rest of the paper is dedicated to the proof of Theorem \ref{thm:weakstrong1}. It is structured as follows: 
In Section 2, we show the absence of vacuum formation for every Leray-Hopf weak solution. Section 3 is dedicated to showing that the strong 
solutions (as defined in Definition \ref{def:strongsol}) are admissible test functions for our weak solutions. 
In Section 4, we employ the relative energy method to justify our weak-strong uniqueness result. Some technical lemmas are relegated to the appendix.

\section{No vacuum formation}\label{sec:vac}

In this section, we analyze the weak solutions of the continuity equation in dimensions $d=2,3$:
\begin{align}\label{eq:conti}
    \begin{cases}
        \partial_t \rho + \dive (\rho u) = 0, & t \in (0,T), \ x \in \R^d \\
        \rho(0,x) = \rho_0(x), & x \in \R^d,
    \end{cases}
\end{align} 
where $0< T \leq \infty$, $\rho \colon [0,T) \times \R^d \to \R $ is the unknown and $u \colon [0,T) \times 
\R^d \to \R^d $ is a given velocity field such that $\dive u= 0$. We analyze two different scenarios:

\begin{itemize}
    \item[(1)] The velocity field has the regularity of a \textit{Leray-Hopf} weak solution in the sense of Definition 
    \ref{def:lerayhopfsol}.
    \item[(2)] The velocity has the regularity of a strong solution, i.e. there exists an $\eta > d/2-1$ such that $u \in 
    \mathcal{S}^\eta(0,T)$.
\end{itemize}
In case (2), we benefit from numerous favorable properties as $u$ is regular, allowing us to apply standard theory. 
In case (1), we need a suitable adaptation of the \textit{DiPerna-Lions} theory. We will see that in both cases 
there exists a flow associated with the velocity field and that the formation of vacuum is excluded. This observation is crucial in our analysis to be able to employ the relative energy method.

\subsection{Flows and weak solutions} 

First, we assume that \(u \) is a strong solution of \eqref{eq:ns}. This implies that $u \in \mathcal{S}^\eta(0,T)$ 
and by Lemma \ref{lem:adddecayprop}, we have \(u \in L^1_\loc((0,T);W^{1,\infty}(\R^d))\). With this 
regularity property in hand, the existence and uniqueness of the flow associated to \(u\) is clear, namely the map 
\(X \colon [0,T) \times \R^d \to \R^d\), which solves, for every \(x \in \R^d\), the system of ordinary differential 
equations:
\begin{align}\label{eq:flowode}
    \begin{cases}
    \dot X(t,x) &= u(t,X(t,x)), \quad t \in (0,\infty), \\
    X(0,x) &= x.
    \end{cases}
\end{align} 

As a consequence of the Picard-Lindelöf/Cauchy-Lipschitz theorem from the theory of ordinary differential equations, we 
have the following properties.
\begin{lemma}\label{lem:propertiesflow}
    Let $u\in \mathcal{S}^\eta(0,T)$. Then the flow map $X \colon [0,T) \times \R^d \to \R^d$ is well-defined 
    and bi-Lipschitz, i.e. $X(t,\cdot) \colon \R^d \to \R^d$ is an invertible Lipschitz mapping for every 
    $t \in (0,T)$ and its inverse mapping $X^{-1}(t, \cdot) \colon \R^d \to \R^d$ is Lipschitz continuous as
    well. Moreover, for every $t \in [0,T)$, we have
    \begin{align}
        \exp \left( - \int_0^t \norm{\nabla u(s)}_\infty \dd s \right) \leq \norm{X(t)}_\lip
        \leq \exp \left( \int_0^t \norm{\nabla u(s)}_\infty \dd s \right),
    \end{align}
   and for every $t \in [0,T)$ and every $x \in \R^d$, we have
    \begin{align}\label{eq.incomcondi}
        J X(t,x) = \det D X(t,x) = 1.
    \end{align}
\end{lemma}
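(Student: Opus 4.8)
The plan is to deduce all three assertions from classical Cauchy--Lipschitz theory, the only input beyond the ODE \eqref{eq:flowode} itself being the regularity $u\in L^1_\loc((0,T);W^{1,\infty}(\R^d))$ furnished by Lemma \ref{lem:adddecayprop}. First I would settle the well-posedness of \eqref{eq:flowode}. For each fixed $x$ the field $t\mapsto u(t,\cdot)$ is Lipschitz in the space variable with constant $\norm{\nabla u(t)}_\infty$, and $t\mapsto\norm{\nabla u(t)}_\infty$ is locally integrable on $(0,T)$; the Carathéodory form of the Picard--Lindelöf theorem then produces, for every $x\in\R^d$, a unique absolutely continuous trajectory $t\mapsto X(t,x)$ solving \eqref{eq:flowode} on $[0,T)$. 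Solving the ODE backwards from a prescribed endpoint yields a two-parameter flow whose time-reversed map inverts $X(t,\cdot)$, so that $X(t,\cdot)$ is onto.

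For the bi-Lipschitz estimate I would run a Grönwall argument from above and below simultaneously. Fixing $x,y$ and setting $\xi(t):=X(t,x)-X(t,y)$, one has $\tfrac12\tfrac{\d}{\d t}\abs{\xi(t)}^2=\xi(t)\cdot\big(u(t,X(t,x))-u(t,X(t,y))\big)$, and the Lipschitz bound on $u(t,\cdot)$ gives $-\norm{\nabla u(t)}_\infty\abs{\xi}^2\le \tfrac12\tfrac{\d}{\d t}\abs{\xi}^2\le \norm{\nabla u(t)}_\infty\abs{\xi}^2$ for a.e.\ $t$. Integrating both inequalities yields $\exp(-\int_0^t\norm{\nabla u}_\infty\ds)\,\abs{x-y}\le\abs{X(t,x)-X(t,y)}\le\exp(\int_0^t\norm{\nabla u}_\infty\ds)\,\abs{x-y}$. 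The lower bound shows $X(t,\cdot)$ is injective and its inverse is Lipschitz, completing the bijection and bi-Lipschitz claims, while taking the supremum over $x\neq y$ in the two-sided estimate gives exactly the stated control of $\norm{X(t)}_\lip$.

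The incompressibility identity \eqref{eq.incomcondi} is where the real work lies, because with $u$ only Lipschitz the map $X(t,\cdot)$ is a.e.\ differentiable (Rademacher) but not $C^1$, so the classical Liouville computation $\tfrac{\d}{\d t}\det DX=(\dive u)(\,\cdot\,,X)\det DX$ cannot be carried out directly. I would therefore mollify in space, $u_\eps:=u*\eta_\eps$, which is smooth, still divergence-free, and satisfies $\norm{\nabla u_\eps(t)}_\infty\le\norm{\nabla u(t)}_\infty$. For each smooth $u_\eps$ Liouville's theorem gives $\det DX_\eps(t,\cdot)\equiv 1$, i.e.\ $X_\eps(t,\cdot)$ preserves Lebesgue measure; the Grönwall stability estimate makes $X_\eps\to X$ locally uniformly with uniform bi-Lipschitz constants, so passing to the limit in $\int f(X_\eps(t,x))\dx=\int f\dx$ shows $X(t,\cdot)$ is itself measure-preserving. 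Comparing this with the area formula $\int f(X(t,x))\abs{\det DX(t,x)}\dx=\int f\dx$, valid for the bi-Lipschitz bijection $X(t,\cdot)$, forces $\abs{\det DX(t,\cdot)}=1$ a.e.; the sign is fixed to $+1$ since $X(t,\cdot)$ is connected to the identity through the flow and hence orientation-preserving. I expect this transfer of the volume-preservation property to the limiting, merely Lipschitz flow — and the use of the area formula to convert it into the pointwise Jacobian identity — to be the main obstacle, whereas the existence, uniqueness, and Lipschitz bounds are routine Grönwall arguments.
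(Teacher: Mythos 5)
Your proposal is correct and takes essentially the same route as the paper, which states this lemma without proof as a direct consequence of Picard--Lindel\"of/Cauchy--Lipschitz theory: your Carath\'eodory well-posedness argument, the two-sided Gr\"onwall bound, and the mollification/Liouville/area-formula treatment of the Jacobian are precisely the standard fleshing-out of that citation, using the regularity $u\in L^1_\loc((0,T);W^{1,\infty}(\R^d))$ from Lemma \ref{lem:adddecayprop}. The one caveat, which your argument correctly reflects, is that $DX(t,\cdot)$ exists only almost everywhere (Rademacher), so the identity \eqref{eq.incomcondi} holds for a.e.\ $x$ rather than literally every $x$ as stated.
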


When dealing with weak solutions, we need to relax the assumptions on $u$, as the Leray-Hopf weak solutions do not 
satisfy a Lipschitz bound. From the result of DiPerna and Lions \cite[Theorem III.2]{DiPernaLions1989}, it is known that 
one can also show the existence and uniqueness of a flow under the following assumptions:
\begin{align}\label{eq:condisonu}
    u \in L^1(0,T;W^{1,1}_\loc(\R^d)), \quad \frac{u}{1+|x|} \in L^1(0,T;L^1(\R^d)+L^\infty(\R^d))\quad \text{and}\quad \dive u = 0.
\end{align}
In Lemma \ref{lem:decomptimedep}, we show that the Leray-Hopf weak solutions satisfy \eqref{eq:condisonu} so one can 
employ a characteristic formulation for the continuity equation to show that there is no formation of vacuum. This is 
presented in the next result.

\begin{proposition}
\label{prop:exunicontieq}
    Let $u\in L^1_\loc(0,T;W^{1,1}_\loc(\R^d))$ with $\dive u=0$ for almost every $t \in (0,T).$ If
    \begin{align}\label{eq:decay18}
        \frac{u}{1+|x|} \in L^1(0,T;L^1(\R^d)+L^\infty(\R^d)),
    \end{align}
    then \eqref{eq:conti} admits a unique weak solution $\rho \in L^\infty((0,T) \times \R^d)$ which, for almost every 
    $(t,x)\in(0,T)\times \R^d$, is given by
    \begin{align*}
        \rho(t,x) = \rho_0(X^{-1}(t,x)),
    \end{align*}
    where $X$ is the flow associated to $u$ defined in \eqref{eq:flowode}.
\end{proposition}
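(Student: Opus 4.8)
The plan is to deduce the statement from the DiPerna--Lions theory of flows generated by weakly differentiable, divergence-free vector fields, and then to verify that the Lagrangian formula $\rho(t,x)=\rho_0(X^{-1}(t,x))$ yields the unique bounded weak solution of \eqref{eq:conti}. First I would invoke \cite[Theorem III.2]{DiPernaLions1989}: under $u\in L^1_\loc(0,T;W^{1,1}_\loc(\R^d))$, $\dive u=0$ for a.e.\ $t$, and the growth control \eqref{eq:decay18}, there exists a unique regular Lagrangian flow $X$ solving \eqref{eq:flowode}. Because $\dive u=0$, this flow is incompressible, so that for almost every $t\in(0,T)$ the map $X(t,\cdot)\colon\R^d\to\R^d$ is a Lebesgue--measure-preserving bijection whose inverse $X^{-1}(t,\cdot)$ is measurable and also measure preserving; this is the DiPerna--Lines analogue of the Jacobian identity \eqref{eq.incomcondi}.

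For existence, I would set $\rho(t,x):=\rho_0(X^{-1}(t,x))$ and check that it is a bounded weak solution of \eqref{eq:conti}. The $L^\infty$ bound is immediate from the measure-preserving property, which gives $\norm{\rho(t,\cdot)}_{L^\infty}=\norm{\rho_0}_{L^\infty}$ for a.e.\ $t$; in the application to the no-vacuum result, the pointwise bounds $c_0\le\rho_0\le C_0$ transfer verbatim to $\rho$. To verify the weak formulation I would first take $\rho_0\in C_c^\infty(\R^d)$, so that $t\mapsto\rho(t,X(t,x))=\rho_0(x)$ is constant along the characteristics; a chain-rule computation along the flow then shows that $\rho$ solves the transport equation classically, and since $\dive u=0$ the transport and continuity equations coincide, so \eqref{eq:conti} holds distributionally. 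For general $\rho_0\in L^\infty(\R^d)$ I would approximate by $\rho_0^n\in C_c^\infty(\R^d)$ with $\rho_0^n\to\rho_0$ in $L^1_\loc$ and weakly-$*$ in $L^\infty$, and pass to the limit in the weak formulation using the uniform $L^\infty$ bound and the stability of $X$ established in \cite{DiPernaLions1989}.

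The heart of the argument is uniqueness, which I would base on the renormalization property. Given two bounded weak solutions $\rho_1,\rho_2$ of \eqref{eq:conti} with the same datum, their difference $w:=\rho_1-\rho_2$ is a bounded weak solution with $w(0)=0$. The DiPerna--Lions renormalization theorem, available precisely because $u\in L^1_\loc(0,T;W^{1,1}_\loc(\R^d))$ with $\dive u=0$, ensures that $\beta(w)$ solves the continuity equation for every $\beta\in C^1(\R)\cap\lip(\R)$; choosing $\beta(r)\simeq\abs{r}$ and testing against spatial cutoffs, the divergence-free condition annihilates the transport term and the growth bound \eqref{eq:decay18} controls the contributions at infinity, so that $t\mapsto\int_{\R^d}\abs{w(t,x)}\dx$ is nonincreasing. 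Since it vanishes at $t=0$, we conclude $w\equiv0$, hence $\rho_1=\rho_2$.

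The main obstacle I anticipate is exactly this renormalization step, and more precisely the justification of the spatial integrations by parts on all of $\R^d$: the commutator estimate underpinning renormalization is supplied by \cite{DiPernaLions1989}, but care is needed to exploit \eqref{eq:decay18} when passing to the limit in the cutoff functions, so that no mass escapes to spatial infinity, and to reconcile the merely local-in-time integrability of $\nabla u$ with a conclusion valid on all of $[0,T)$ --- which I would handle by arguing on each subinterval $[0,T']$ with $T'<T$ and then letting $T'\uparrow T$.
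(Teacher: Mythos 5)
Your proposal is correct and takes essentially the same route as the paper, whose entire proof is a one-line citation of DiPerna--Lions (Prop.~II.1, Cor.~II.1 and Thm.~III.1 of \cite{DiPernaLions1989}); your argument simply unpacks those cited results --- existence and uniqueness of the regular Lagrangian flow under \eqref{eq:decay18}, the measure-preserving Lagrangian representation, and renormalization-based uniqueness of bounded weak solutions. The only imprecision is the claim that $\rho_0(X^{-1}(t,\cdot))$ solves the equation \emph{classically} for smooth $\rho_0$: the regular Lagrangian flow is in general not $C^1$ in $x$ when $u$ is merely $W^{1,1}_\loc$, so the verification has to be carried out directly in the weak formulation via a change of variables along the flow --- but this is precisely what the cited DiPerna--Lions theorems supply, so it does not affect the validity of your argument.
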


\begin{proof}
    The result is a direct consequence of \cite[Prop. II.1]{DiPernaLions1989}, \cite[Cor. II.1]{DiPernaLions1989} and 
    \cite[Thm. III.1]{DiPernaLions1989}.
\end{proof}

\begin{remark}\label{rmk:transruleinc}
    The incompressibility condition $\dive u=0$ implies that $X(t,\cdot)$ and $X^{-1}(t,\cdot)$ are measure-preserving 
    diffeomorphisms and, in particular, we have (\ref{eq.incomcondi}). As a consequence, for $\alpha\in\{-1,1\}$, one has
    \begin{align*}
        \int_0^T f(t,X^\alpha(t,x)) \dd x \dd t = \int_0^T f(t,x) \dd x \dd t
    \end{align*}
    for every $f \in L^1((0,T) \times \R^d).$
\end{remark}

\begin{remark}\label{rmk:com}
   A key idea of the theory developed in \cite{DiPernaLions1989} is to show
    that every weak solution $\rho$ of (\ref{eq:conti}) is renormalized, i.e. for every $\beta \in C^1(\R^d)$, it holds, in the sense of distributions, 
    \begin{align*}
        \partial_t \beta(\rho) + \dive (\beta(\rho) \cdot u) = 0.
    \end{align*}
This is done by developing commutator estimates, for instance, from 
    \cite[Lemma 2.3, p.43]{Lions1996}, for $v \in H^1(\R^d;\R^d)$ and $g\in L^\infty(\R^d)$, there exists a constant $C>0$ independent of $\varepsilon$ such that 
\begin{align}
    \norm{\dive(g v_\eps - \div (g_\eps v))}_{L^2(\R^d)} \leq C\norm{v}_{W^{1,2}(\R^d)} \norm{g}_{L^\infty(\R^d)},
\end{align}
and
\begin{align}\label{eq:commutatorestimate}
    \lim_{\eps \to 0}\norm{\dive((g v)_\eps - g_\eps v)}_{L^2(\R^d)} \to 0,
    \end{align}
    where $f_\varepsilon$ denotes the mollification in space of a time-space function $f$ in $L^1_{\loc}((0,T)\times\R^d)$.
\end{remark}

With these results in hand, if we show that a \textit{Leray-Hopf} weak solution satisfies \eqref{eq:decay18}, then the 
characteristic formulation allows us to exclude the formation of vacuum. This step is done in Lemma 
\ref{lem:decomptimedep} and leads to the main result of this section.
\begin{theorem}[No Vacuum Formation]\label{thm:novacuum}
    Let \((\rho_0,u_0) \in L^\infty(\R^d) \times L_\sigma^2(\R^d;\R^d)\) and assume that
    \[ 0 < c_0 \leq \rho_0 \leq C_0. \] Let \((\rho,u)\) be a Leray-Hopf weak solution of \eqref{eq:ns} in the 
    sense of Definition \eqref{def:lerayhopfsol} with the initial data $(\rho_0,u_0).$  Then, for almost every \((t,x) \in 
    [0,\infty) \times \R^d\), \[ 0 < c_0 \leq \rho(t,x) \leq C_0 \quad \text{and} \quad u \in 
    L^\infty((0,\infty); L^2(\R^d)). \]  
\end{theorem}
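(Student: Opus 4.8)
The plan is to derive both conclusions from the Lagrangian representation of the density and then to read off the velocity bound from the energy inequality. By Definition \ref{def:lerayhopfsol}, the density $\rho$ of a \textit{Leray-Hopf} weak solution is a bounded distributional solution of the continuity equation \eqref{eq:conti} driven by the divergence-free field $u$. To bring Proposition \ref{prop:exunicontieq} to bear I first verify that $u$ satisfies the \textit{DiPerna-Lions} hypotheses \eqref{eq:condisonu}: the spatial regularity $u \in L^1_\loc(0,T;W^{1,1}_\loc(\R^d))$ follows from $\nabla u \in L^2((0,\infty);L^2(\R^d))$ together with $u \in L^2_\loc$, the condition $\dive u = 0$ is built into the definition, and the only substantial input is the weighted decay \eqref{eq:decay18}, which is precisely the content of Lemma \ref{lem:decomptimedep}. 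Granting this, $u$ meets the hypotheses of Proposition \ref{prop:exunicontieq} on every finite interval $(0,T)$.

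With Proposition \ref{prop:exunicontieq} in force, its uniqueness statement forces the \textit{Leray-Hopf} density to coincide with the explicit solution, so that for almost every $(t,x)$,
\begin{align*}
    \rho(t,x) = \rho_0\big(X^{-1}(t,x)\big),
\end{align*}
where $X$ is the flow \eqref{eq:flowode} generated by $u$. By Remark \ref{rmk:transruleinc} the incompressibility $\dive u = 0$ renders $X(t,\cdot)$ a measure-preserving diffeomorphism. Consequently, letting $N := \{\, y : \rho_0(y) \notin [c_0,C_0]\,\}$ be the null set where the bounds on $\rho_0$ fail, the set $\{\, x : \rho(t,x) \notin [c_0,C_0]\,\}$ equals $X(t,N)$ and hence has measure $\abs{X(t,N)} = \abs{N} = 0$. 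This gives $0 < c_0 \leq \rho(t,x) \leq C_0$ for almost every $(t,x)$, which is the first assertion; since this holds on every $(0,T)$, it holds on $[0,\infty)$.

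For the velocity bound I would insert the now-established lower bound on the density into the energy inequality \eqref{eq:EE}. As $\rho(t) \geq c_0 > 0$ almost everywhere and \eqref{eq:EE} holds for every $t \in (0,\infty)$,
\begin{align*}
    c_0 \norm{u(t)}_{L^2(\R^d)}^2 \leq \int_{\R^d} \rho(t)\abs{u(t)}^2 \dd x \leq \int_{\R^d} \rho_0 \abs{u_0}^2 \dd x \leq C_0 \norm{u_0}_{L^2(\R^d)}^2,
\end{align*}
so that $\esssup_{t>0}\norm{u(t)}_{L^2(\R^d)}^2 \leq (C_0/c_0)\norm{u_0}_{L^2(\R^d)}^2 < \infty$, i.e. $u \in L^\infty((0,\infty);L^2(\R^d))$.

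The crux, and the main obstacle, is the weighted decay estimate \eqref{eq:decay18} of Lemma \ref{lem:decomptimedep}, which must be proved \emph{without} invoking $u \in L^\infty((0,\infty);L^2(\R^d))$, as that is itself part of the conclusion and would render the argument circular. On each finite interval $(0,T)$ I would split $u/(1+\abs{x})$ pointwise according to the size of $u$: on $\{\abs{u} \leq 1+\abs{x}\}$ the quotient is bounded by $1$, hence lies in $L^\infty$ uniformly in time with $\int_0^T \norm{\cdot}_\infty \dd t \leq T$; on $\{\abs{u} > 1+\abs{x}\}$ one has $\abs{u}/(1+\abs{x}) \leq \abs{u}^2/(1+\abs{x})^2$, so a Hardy-type bound controls this piece by $\int_{\R^d} \abs{u}^2/(1+\abs{x})^2 \dd x$. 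In $d=3$ the latter is dominated by $\norm{\nabla u}_{L^2(\R^d)}^2$ through Hardy's and Sobolev's inequalities, and is therefore integrable in time since $\nabla u \in L^2((0,\infty);L^2(\R^d))$. The genuinely delicate case is $d=2$, where Hardy's inequality degenerates; there one must instead lean on the uniform bound $\sqrt{\rho}\,u \in L^\infty((0,\infty);L^2(\R^d))$ and the local $L^2$-integrability of $u$, which is the technical heart of Lemma \ref{lem:decomptimedep}.
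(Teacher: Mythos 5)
Your proof of the theorem itself follows the paper's route exactly: verify the DiPerna--Lions hypotheses \eqref{eq:condisonu} (with the decay condition \eqref{eq:decay18} supplied by Lemma \ref{lem:decomptimedep}), invoke Proposition \ref{prop:exunicontieq} to obtain $\rho(t,\cdot)=\rho_0(X^{-1}(t,\cdot))$, and conclude the density bounds by measure preservation of the flow. The additional step deducing $u \in L^\infty((0,\infty);L^2(\R^d))$ from the energy inequality \eqref{eq:EE} once $\rho \geq c_0$ is established is correct, and is the natural completion of an argument the paper leaves implicit.

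Where you diverge is in your closing discussion of how Lemma \ref{lem:decomptimedep} itself is proved. Your circularity concern is legitimate --- the decay estimate must not presuppose $u \in L^\infty((0,\infty);L^2(\R^d))$ --- but your proposed resolution misidentifies what the paper actually does. The lemma is proved via the mollification decomposition of Proposition \ref{prop:Ortner}: $v = u*\varphi$, $w = u - v$, with $\norm{\nabla v}_{L^\infty} \leq C \norm{\nabla u}_{L^2}$ and $\norm{w}_{H^1} \leq C\norm{\nabla u}_{L^2}$, so that everything is controlled by $\nabla u \in L^2((0,T);L^2(\R^d))$ alone, uniformly in the dimension; the linearly growing piece $v$ contributes to the $L^\infty$ part after division by $1+|x|$, while $w$ lands in $L^2 \subset L^1 + L^\infty$. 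Your Hardy-type splitting is a legitimate alternative in $d=3$, but your fallback for $d=2$ --- ``leaning on $\sqrt{\rho}\,u \in L^\infty((0,\infty);L^2(\R^d))$'' --- would reintroduce exactly the circularity you set out to avoid: converting control of $\sqrt{\rho}\,u$ into control of $u$ requires the lower bound $\rho \geq c_0 > 0$, which is what the theorem is proving. So if your sketch were taken as the actual proof of the decay estimate, the two-dimensional case would be a genuine gap; as written, your proof of the theorem stands only because you are entitled to cite Lemma \ref{lem:decomptimedep} as an established result of the paper, exactly as the paper's own proof does.
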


\begin{proof} 
First, we have that $\rho$ is the unique solution of \eqref{eq:conti} in $L^\infty((0,T) \times \R^d)$ if $u$ 
satisfies the conditions stated in \eqref{eq:condisonu}. Under this assumption, by Proposition \ref{prop:exunicontieq}, we obtain that 
$\rho$ satisfies
\begin{align}\label{eq:formunovacuum}
    \rho(t,\cdot)= \rho_0(X^{-1}(t,\cdot)), \quad t>0.
\end{align}
From \eqref{eq:formunovacuum}, it follows directly that there is no formation of vacuum. Concerning the conditions in \eqref{eq:condisonu},  
\begin{align*}
    u \in L^1_\loc(0,T;W^{1,1}_\loc(\R^d)) \quad \text{and}\quad \dive u =0
\end{align*}
follow from the definition of weak solutions and the condition 
\begin{align*}
    \frac{u}{1+|x|} \in L^1(0,T;L^1(\R^d)+L^\infty(\R^d))
\end{align*}
is satisfied thanks to Lemma \ref{lem:decomptimedep}.
\end{proof}

\section{Admissible test functions for the continuity and momentum equation}\label{sec:admin}

In this section, we show that the strong solutions are admissible test functions for the weak formulation. 
Throughout the section, let $q>1$ and $0<T\leq \infty.$ We define the spaces
\begin{align*}
    X_{2,q,T} & := W^{1,q}(0,T;L^2_\sigma(\R^2)) \cap L^q(0,T;H^2(\R^2)),\\
    X_{3,q,T} & := W^{1,q}(0,T;L^2_\sigma(\R^3)) \cap L^q(0,T;H^2(\R^3)) \cap L^4(0,T;L^6(\R^3)).
\end{align*}
In both cases $d=2,3$,
\begin{align*}
    X_{d,q,T} \hookrightarrow C([0,T);L^2_\sigma(\R^d)) \quad \textrm{ and } \quad 
    X_{d,q,T} \hookrightarrow L^2(0,T;\dot{H}^1(\R^d)).
\end{align*}
Furthermore, we define the space
\begin{align*}
    \mathcal{B}_{q,T} &= \left\{ \varphi \in X_{d,q,T}: \varphi \in 
    H^1(\eps,\eps^{-1} \wedge T;H^1(\R^d)),\ \eps>0 \right\}.
\end{align*}
Let $0<t_1<t_2<T$ and let $\varphi \in \mathcal{B}_{q,T}$. From the paper of Masuda \cite{Masuda1984}, we know that 
there exists a sequence $\{\varphi_n\}_{n \in \N} \subset C^\infty([t_1,t_2];C^\infty_{c,\sigma}(\R^d))$ such that
\begin{align*}
    \norm{\varphi_n - \varphi}_{H^1(t_1,t_2;H^1(\R^d))} \underset{n\to\infty}{\to} 0,
\end{align*}
and, in particular, for every $\tau \in [t_1,t_2]$, we have
\begin{align*}
    \norm{\varphi_n(\tau) - \varphi(\tau)}_{H^1(\R^d)} \underset{n\to\infty}{\to} 0.
\end{align*}
The following lemma shows the convergence of the approximations of nonlinear terms.

\begin{lemma}\label{lem:convproducts}
    Let $\varphi \in \mathcal{B}_{q,T}$, $0<t_1<t_2<T$ and $\{\varphi_n\}_{n \in \N}$ be as above. Suppose that
    \begin{align*}
        u \in L^\infty(0,T;L^2_\sigma(\R^d)) \quad \text{and} \quad \nabla u \in L^2(0,T;L^2(\R^d)).
    \end{align*}
   We have 
    \begin{align*}
        \partial_t \varphi \cdot \varphi, \,\partial_t \varphi \cdot u, \,u \otimes \varphi : \nabla \varphi, \,
        u \otimes u : \nabla \varphi \in L^1((0,t_2) \times \R^d),
    \end{align*}
    and the following convergence results hold in $L^1((t_1,t_2) \times \R^d)$:
    \begin{align*}
        \partial_t \varphi_n \cdot \varphi_n \underset{n\to\infty}{\to} \partial_t \varphi \cdot \varphi, \quad
        \partial_t \varphi_n \cdot u \underset{n\to\infty}{\to}\partial_t \varphi \cdot u,\\
        u \otimes \varphi_n : \nabla \varphi_n \underset{n\to\infty}{\to} u \otimes \varphi : \nabla \varphi, \quad
        u \otimes u : \nabla \varphi_n \underset{n\to\infty}{\to} u \otimes u : \nabla \varphi.
    \end{align*}
\end{lemma}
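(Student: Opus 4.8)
The plan is to reduce both assertions to Hölder's inequality in space--time, combined with the functional-analytic information carried by $\mathcal{B}_{q,T}$ and by the Leray--Hopf bounds on $u$, and then to exploit the \emph{uniform-in-time} convergence of the Masuda approximations on compact time-subintervals. First I would collect the relevant embeddings. From $\varphi\in X_{d,q,T}$ one has $\varphi\in C([0,T);L^2_\sigma)\cap L^2(0,T;\dot H^1)\cap L^q(0,T;H^2)$ and $\partial_t\varphi\in L^q(0,T;L^2)$; in dimension $d=3$ one additionally has $\varphi\in L^4(0,T;L^6)$ and, through $H^2(\R^3)\hookrightarrow W^{1,6}(\R^3)$, $\nabla\varphi\in L^q(0,T;L^6)$. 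On the side of $u$, the hypotheses give $u\in L^\infty(0,T;L^2)\cap L^2(0,T;\dot H^1)$, whence (Sobolev in $d=3$) $u\in L^2(0,T;L^6)$, and by the Gagliardo--Nirenberg/Ladyzhenskaya interpolation $\norm{u}_4\lesssim\norm{u}_2^{1/2}\norm{\nabla u}_2^{1/2}$ in $d=2$, and $\norm{u}_3\lesssim\norm{u}_2^{1/2}\norm{\nabla u}_2^{1/2}$, $\norm{u}_4\lesssim\norm{u}_2^{1/4}\norm{\nabla u}_2^{3/4}$ in $d=3$, so that $u\in L^4(0,t_2;L^4)$ for $d=2$ and $u\in L^4(0,t_2;L^3)\cap L^{8/3}(0,t_2;L^4)$ for $d=3$.

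For the integrability on $(0,t_2)\times\R^d$, the two terms linear in $\partial_t\varphi$ are immediate: by Cauchy--Schwarz in space and Hölder in time,
\begin{align*}
\int_0^{t_2}\!\!\int_{\R^d}\abs{\partial_t\varphi\cdot\varphi}\dd x\dd s
\leq \norm{\partial_t\varphi}_{L^q(0,t_2;L^2)}\,\norm{\varphi}_{L^{q'}(0,t_2;L^2)}<\infty,
\end{align*}
and likewise for $\partial_t\varphi\cdot u$, using $\varphi,u\in L^\infty(0,t_2;L^2)$. For the trilinear term $u\otimes\varphi:\nabla\varphi$ I would use $\norm{u}_4\norm{\varphi}_4\norm{\nabla\varphi}_2$ in $d=2$ (the three factors lying in $L^4,L^4,L^2$ in time, exponents summing to one), and $\norm{u}_3\norm{\varphi}_6\norm{\nabla\varphi}_2$ in $d=3$ (with $u\in L^4(L^3)$, $\varphi\in L^4(L^6)$, $\nabla\varphi\in L^2(L^2)$). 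The quadratic term $u\otimes u:\nabla\varphi$ is handled by $\norm{u}_4^2\norm{\nabla\varphi}_2$ in $d=2$; in $d=3$ it is the delicate one, and here I would pair $\nabla\varphi\in L^q(0,t_2;L^6)$ with $u\otimes u\in L^4(0,t_2;L^{6/5})$, the latter coming from $\norm{u}_{12/5}^2\lesssim\norm{u}_2^{3/2}\norm{u}_6^{1/2}$ together with $u\in L^2(0,t_2;L^6)$.

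For the convergence statements I would work on $(t_1,t_2)$, where, choosing $\eps$ small enough that $(t_1,t_2)\subset(\eps,\eps^{-1}\wedge T)$, the extra regularity built into $\mathcal{B}_{q,T}$ gives $\varphi\in H^1(t_1,t_2;H^1)$. Consequently $\partial_t\varphi_n\to\partial_t\varphi$ in $L^2(t_1,t_2;L^2)$ and, via the embedding $H^1(t_1,t_2;H^1)\hookrightarrow C([t_1,t_2];H^1)$, one obtains the crucial \emph{uniform} convergence $\sup_{[t_1,t_2]}\norm{\varphi_n-\varphi}_{H^1}\to0$, hence $\varphi_n\to\varphi$ and $\nabla\varphi_n\to\nabla\varphi$ in $L^\infty(t_1,t_2;L^2)$. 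I would then write each product difference as a sum of bilinear remainders, for instance $u\otimes u:\nabla\varphi_n-u\otimes u:\nabla\varphi=u\otimes u:\nabla(\varphi_n-\varphi)$ and $u\otimes\varphi_n:\nabla\varphi_n-u\otimes\varphi:\nabla\varphi=u\otimes(\varphi_n-\varphi):\nabla\varphi_n+u\otimes\varphi:\nabla(\varphi_n-\varphi)$, and estimate each remainder by the very same Hölder scheme, pulling the uniformly small factor $\sup_{[t_1,t_2]}\norm{\varphi_n-\varphi}_{H^1}$ (or the $L^2(L^2)$-small quantity $\norm{\partial_t\varphi_n-\partial_t\varphi}$) out of the integral. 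This leaves behind only \emph{finite} space--time integrals of norms of $u$ and $\varphi_n$; e.g. the quadratic term reduces to $\sup_{[t_1,t_2]}\norm{\nabla(\varphi_n-\varphi)}_2\cdot\int_{t_1}^{t_2}\norm{u}_4^2\dd s\to0$, the integral being finite since $u\in L^{8/3}(t_1,t_2;L^4)\subset L^2(t_1,t_2;L^4)$.

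I expect the main obstacle to be the three-dimensional quadratic term $u\otimes u:\nabla\varphi$: because $u$ carries only Leray--Hopf regularity, its integrability forces the sharpest simultaneous use of the parabolic interpolation of $u$ and of the highest spatial regularity available for $\varphi$ (namely $H^2\hookrightarrow W^{1,6}$ and $\varphi\in L^4(L^6)$), and it is there that the exponent bookkeeping is tightest. By contrast, the corresponding convergence is comparatively painless once the uniform-in-time $H^1$ convergence of the approximations is in hand, since it converts every delicate product limit into a product of a vanishing supremum with a finite integral, with no further restriction entering.
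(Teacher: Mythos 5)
The paper states Lemma~\ref{lem:convproducts} without proof, so there is no written argument to compare yours against; judged on its own merits, your scheme is the natural one: Hölder and Gagliardo--Nirenberg interpolation for the integrability on $(0,t_2)$, then the uniform-in-time convergence $\sup_{[t_1,t_2]}\norm{\varphi_n-\varphi}_{H^1}\to 0$ (from $H^1(t_1,t_2;H^1)\hookrightarrow C^{1/2}([t_1,t_2];H^1)$, available because $(t_1,t_2)$ stays away from $0$) to reduce every product limit to a vanishing supremum times a finite integral. The convergence half is complete and correct in both dimensions, and so is the integrability of all four terms when $d=2$ and of $\partial_t\varphi\cdot\varphi$, $\partial_t\varphi\cdot u$, $u\otimes\varphi:\nabla\varphi$ when $d=3$; in particular, your use of the $L^4(0,T;L^6)$ component of $X_{3,q,T}$ for the trilinear term is exactly what that component is built in for.

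The gap is the three-dimensional quadratic term, precisely the one you call ``tightest''. Your pairing of $u\otimes u\in L^4(0,t_2;L^{6/5})$ with $\nabla\varphi\in L^q(0,t_2;L^6)$ requires, by Hölder in time, $\tfrac14+\tfrac1q\le 1$, i.e.\ $q\ge 4/3$; but the section fixes only $q>1$, and for $q\in(1,4/3)$ nothing in $\mathcal{B}_{q,T}$ controls $\nabla\varphi$ in $L^{4/3}(0,t_2;L^6)$ near $t=0$ (the extra $H^1(\eps,\eps^{-1}\wedge T;H^1)$ regularity gives no information up to $t=0$). This is not repairable by redistributing exponents: estimating $\int_0^{t_2}\!\int_{\R^3}|u|^2|\nabla\varphi|\dd x\dd s$ via $\norm{u}_2^{2\theta}\norm{u}_6^{2(1-\theta)}\,\norm{\varphi}_2^{\alpha_1}\norm{\varphi}_6^{\alpha_2}\norm{\varphi}_{H^2}^{\alpha_3}$ (with $\alpha_3\ge \tfrac12$, since the single derivative must come from the $H^2$ factor), the spatial exponent matching plus time summability reduce to $\alpha_2+4\alpha_3/q'\ge 1$, which is impossible when $q'>4$; the same barrier $q\ge4/3$ reappears if one instead interpolates $W^{1,q}(0,T;L^2)$ with $L^q(0,T;H^2)$ and uses Sobolev embedding in time. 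So for $q<4/3$ the step genuinely fails, and it is not even clear the statement is true there. The repair is cheap and you should make it explicit: prove the lemma for $d=3$ under the restriction $q\ge 4/3$. This costs nothing downstream, because in $d=3$ strong solutions have $\eta>1/2$, hence $\partial_t u_2$, $\nabla^2 u_2\in L^q(0,T;L^2)$ for every $q<2/(2-\eta)$ and $2/(2-\eta)>4/3$, so Lemma~\ref{Lem:AdminTest} may simply select such a $q$; but as a proof of the lemma as stated, with arbitrary $q>1$, your argument has a hole.
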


Next, we investigate the weak continuity of $\rho u$ when $(\rho,u)$ is a \textit{Leray-Hopf} weak solution. 
\begin{lemma}\label{lem:Contintimemom}
    Let $(\rho_0,u_0) \in L^\infty(\R^d) \times L^2_\sigma(\R^d)$ and $(\rho,u)$ be a 
    \textit{Leray-Hopf} weak solution with respect to $(\rho_0,u_0)$. There 
    exists a set $I_w \subset [0,\infty)$ with $\lambda([0,\infty) \setminus I_w)=0$ and $0 
    \in I_w$ such that
    \begin{itemize}
        \item[(i)] For every $\varphi \in H_\sigma^1(\R^d;\R^d)$ and $t,s \in I_w$, we have
        \begin{align}\label{eq:testmontimeinde}
        \begin{aligned}
            \int_{\R^d} \rho(t)u(t) \cdot \varphi\dd x &- \int_{\R^d} 
            \rho(s)u(s) \cdot \varphi\dd x\\ &= \int_s^t\int_{\R^d}  \rho u 
            \otimes u : \nabla \varphi \dd x \dd \tau 
            - \nu \int_s^t\int_{\R^d} \nabla u: \nabla \varphi \dd x \dd \tau.
        \end{aligned}
         \end{align}
        \item[(ii)] $\rho u \in C_w(I_w;L^2_\sigma(\R^d)).$
    \end{itemize}
\end{lemma}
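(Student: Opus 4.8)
The plan is to extract from the distributional momentum equation a scalar evolution identity for each fixed spatial test function, and then to select, by a separability argument, a single full-measure set of times on which everything holds simultaneously. Concretely, fix $\psi \in C^\infty_{c,\sigma}(\R^d;\R^d)$ and insert into the weak momentum formulation of Definition \ref{def:lerayhopfsol} the separated test function $\varphi(t,x) = \theta(t)\psi(x)$ with $\theta \in C^\infty_c((0,\infty))$. Writing
\[
g_\psi(t) := \int_{\R^d} \rho(t) u(t) \cdot \psi \,\dd x, \qquad h_\psi(t) := \int_{\R^d} \rho u \otimes u : \nabla \psi - \nu \nabla u : \nabla \psi \,\dd x,
\]
the formulation reduces to $-\int_0^\infty \theta'(t)\, g_\psi(t)\, \dd t = \int_0^\infty \theta(t)\, h_\psi(t)\, \dd t$, i.e. $h_\psi$ is the distributional time derivative of $g_\psi$.

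The first task is to show $g_\psi, h_\psi \in L^1_\loc((0,\infty))$, so that $g_\psi$ admits an absolutely continuous representative with $g_\psi(t)-g_\psi(s) = \int_s^t h_\psi \,\dd \tau$. Here I would invoke the No Vacuum Formation result (Theorem \ref{thm:novacuum}): since $\rho \geq c_0 > 0$ and $\sqrt{\rho}\,u \in L^\infty((0,\infty);L^2)$, one upgrades this to $u \in L^\infty((0,\infty);L^2)$ and $\norm{\rho u(t)}_2 \leq C_0^{1/2}\norm{\sqrt{\rho}\,u(t)}_2 \leq C$, whence $g_\psi \in L^\infty_\loc$. For $h_\psi$, the dissipative contribution is bounded by $\norm{\nabla u}_2\,\norm{\nabla \psi}_2$, which lies in $L^2_t \subset L^1_\loc$ since $\nabla u \in L^2_{t,x}$; the convective contribution is controlled by interpolating $u \in L^\infty_t L^2 \cap L^2_t L^6$ through the Gagliardo--Nirenberg (resp.\ Ladyzhenskaya in $d=2$) inequality to obtain $\int_0^T \norm{\rho u \otimes u}_2 \,\dd t \leq C\int_0^T \norm{u}_4^2 \,\dd t < \infty$ in both dimensions. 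Testing instead against $\theta \in C^\infty_c([0,\infty))$ with $\theta(0)\neq 0$ fixes the value at the origin through the boundary term, so that $0$ may be included with $g_\psi(0) = \int_{\R^d} \rho_0 u_0 \cdot \psi\,\dd x$.

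To remove the $\psi$-dependence of the exceptional null sets, I would fix a countable family $\{\psi_k\} \subset C^\infty_{c,\sigma}(\R^d)$ dense in $H^1_\sigma(\R^d)$, let $N_k$ be the null set off which the identity for $\psi_k$ fails, and set $I_w := \{0\} \cup \big([0,\infty) \setminus \bigcup_k N_k\big)$. The uniform bound $\norm{\rho u(t)}_2 \leq C$ shows, for each $t \in I_w$, that the functional $\psi_k \mapsto g_{\psi_k}(t)$ extends to a bounded linear functional on $L^2_\sigma$, hence is represented by a well-defined element of $L^2_\sigma$ which we designate as $\rho u(t)$. The integral identity then transfers from the dense family to every $\varphi \in H^1_\sigma(\R^d)$ by approximation in $H^1$: one passes to the limit on the right-hand side using $\rho u \otimes u \in L^1_t L^2_x$ and $\nabla u \in L^2_{t,x}$ paired against the $L^2$-convergent $\nabla\varphi_n$, and on the left-hand side using $\rho u(t) \in L^2$. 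This establishes assertion (i).

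Finally, (ii) follows from the uniform $L^2$-bound together with the continuity of each $t \mapsto g_{\psi_k}(t)$: given $\varphi \in L^2_\sigma$ and $\eps>0$, pick $\psi_k$ with $\norm{\varphi-\psi_k}_2$ small, and split
\[
\skal{\rho u(t)-\rho u(s),\, \varphi} = \skal{\rho u(t)-\rho u(s),\, \psi_k} + \skal{\rho u(t)-\rho u(s),\, \varphi - \psi_k},
\]
where the first term is continuous in time and the second is bounded by $2C\norm{\varphi-\psi_k}_2$; this yields $\rho u \in C_w(I_w;L^2_\sigma(\R^d))$. I expect the main obstacle to be the time-integrability of the convective term $\rho u \otimes u$ in $L^1_t L^2_x$: this is precisely where the absence of vacuum is indispensable, as it promotes $\sqrt{\rho}\,u \in L^\infty_t L^2$ to $u \in L^\infty_t L^2$ and thereby licenses the interpolation that makes $h_\psi$ integrable in time.
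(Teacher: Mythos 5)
Your proof is correct and follows essentially the same route as the paper: separated test functions paired with a countable dense family in $H^1_\sigma(\R^d)$, an a.e.-in-time fundamental-theorem-of-calculus identity (your distributional-derivative/absolutely-continuous-representative argument is equivalent to the paper's Lebesgue-point plus sharp-cutoff argument), recovery of the time $t=0$ from the boundary term, extension to general $\varphi\in H^1_\sigma(\R^d)$ by density, and the same uniform-$L^2$-bound density argument for the weak continuity in (ii). The only difference is presentational: you make explicit the integrability of the convective term $\rho u\otimes u$ in $L^1_tL^2_x$ via Theorem \ref{thm:novacuum0} and Gagliardo--Nirenberg, a point the paper's proof uses but leaves largely implicit.
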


\begin{proof}
    Let $(\varphi_n)_{n \in \N} \subset C^\infty_{0,\sigma}(\R^d)$ be a dense sequence in $H^1_\sigma
    (\R^d)$ and $I_n$ be the set of Lebesgue points of the mapping 
    \begin{align*}
        t \mapsto \int_{\R^d} \rho(t) u(t) \varphi_n \dd x.
    \end{align*}
    By Lebesgue's differentiation theorem, we have $\lambda([0,\infty) \setminus I_n)=0$ for every $n \in \N$ where 
    $\lambda$ is the Lebesgue measure. Defining
    \begin{align*}
        I_w = \bigcap_{n \in \N} I_n \cup \{0\},
    \end{align*}
    one has $\lambda([0,\infty) \setminus I_w)=0.$ In the next step, we justify (\ref{eq:testmontimeinde}) for every 
    $n \in \N$. Note that it suffices to show is it true for every $t \in I_{\omega}$, with $t>0$ and $s=0.$ The general case 
    follows by subtraction. Let $\psi \colon \R \to \R$ be a smooth, decreasing function and assume that $\psi \equiv 1$ 
    on $(-\infty,0)$ and $\psi \equiv 0$ on $(1,\infty)$. For every $\eps > 0$ and every $n \in \N$, set
    \begin{align*}
        \varphi_{\eps,n}(\tau,x) = \psi \left(\frac{t-\tau}{\eps} \right) \varphi_n(x).
    \end{align*}
     Then $\varphi_{\eps,n}$ is an admissible 
    test function for $(\rho,u)$. Using Lebesgue's differentiation theorem and the 
    dominated convergence theorem one concludes that, for every $n\in \mathbb{N}$, \begin{align*}
        \int_{\R^d} \rho(t)u(t) \cdot \varphi_n \dd x &- \int_{\R^d} 
        \rho_0 u_0 \cdot \varphi_n \dd x\\ &= \int_0^t \int_{\R^d}  \rho u 
        \otimes u : \nabla \varphi_n \dd x \dd \tau 
        - \nu \int_0^t \int_{\R^d} \nabla u \nabla \varphi_n \dd x \dd \tau.
    \end{align*}
    The equality in (\ref{eq:testmontimeinde}), for general $\varphi 
    \in H_\sigma^1(\R^d;\R^d)$, follows by passing to the limit $n\to\infty$ and the fact that $\rho u \in    
    L^\infty(0,\infty;L^2(\R^d)).$ This proves (i). 
    
    In order to show (ii), recall that, for every $\varphi \in H^1(\R^d;\R^d)$, we have
    \begin{align*}
        \rho u \otimes u : \nabla \varphi,\,  \nabla u : \nabla \varphi \in L^1_\loc((0,T) \times \R^d).
    \end{align*}
     From \eqref{eq:testmontimeinde}, we deduce that, for every $t \in I_w$ and every $\varphi \in H^1_\sigma(\R^d;\R^d)$,
    \begin{align}\label{eq:weakconH1}
        \lim_{I_w \ni s \to t} \int_{\R^d} \rho(s)u(s) \cdot \varphi \dd x =
        \int_{\R^d} \rho(t)u(t) \cdot \varphi \dd x.
    \end{align}
    Since $\rho u \in L^\infty(0,\infty;L^2_\sigma(\R^d))$ and 
    $H^1_\sigma(\R^d)$ is dense in $L^2_\sigma(\R^d)$, we deduce that \eqref{eq:weakconH1} is true for every $\varphi \in
    L^2_\sigma(\R^d)$ which concludes the proof.
\end{proof}

The next lemma shows that functions in $\mathcal{B}_{q,T}$ can be used as test functions for (\ref{eq:ns}).

\begin{lemma}\label{lem:testins}
    Let $(\rho,u)$ be a weak Leray-Hopf solution of (\ref{eq:ns}) with respect to initial $(\rho_0,u_0) \in L^\infty(\R^d)
    \times L^2_\sigma(\R^d))$ and let $\varphi \in \mathcal{B}_{q,T}.$ For every $t \in (0,T)$, we have
    \begin{align}\label{eq:testcontBpt}
        \frac{1}{2} \int_{\R^d} \rho(t) |\varphi(t)|^2 \dd x - \frac{1}{2} \int_{\R^d} \rho(0) |\varphi(0)|^2 \dd x
        = \int_0^t \int_{\R^d} \rho \partial_t \varphi \cdot \varphi + \rho u \otimes \varphi : 
        \nabla \varphi \dd x \dd \tau,
    \end{align}
    and, for almost every $t \in (0,T)$,
    \begin{align}\label{eq:testmomBpt}
    \begin{aligned}
        - \int_0^t \int_{\R^d} & \rho \partial_t \varphi \cdot u+ 
        \rho u \otimes u : \nabla \varphi \dd x \dd s  + \int_{\R^d}  \rho(t)u(t) 
        \cdot \varphi(t) \dd x \\
        = & - \nu \int_0^t \int_{\R^d} \skal{\nabla u,\nabla \varphi} \dd s 
        + \int_{\R^d} \rho_0 u_0 \cdot \varphi(0) \dd x.
    \end{aligned}
    \end{align}
\end{lemma}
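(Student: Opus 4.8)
The plan is to obtain both identities by approximating $\varphi\in\mathcal{B}_{q,T}$ with the smooth, spatially compactly supported, divergence-free fields $\varphi_n$ introduced just before the statement, establishing each identity for $\varphi_n$ on a compact time window $[t_1,t_2]\subset(0,T)$, and then passing to the limit in two stages: first $n\to\infty$ (using Lemma \ref{lem:convproducts} for the nonlinear products), and finally $t_1\to0^+$, $t_2\to t$. I would read \eqref{eq:testcontBpt} as the weak mass equation of Definition \ref{def:lerayhopfsol} tested against the scalar $\tfrac12|\varphi|^2$, and \eqref{eq:testmomBpt} as the weak momentum equation tested against the time-dependent field $\varphi$. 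The gap in difficulty between the two reflects precisely the fact that $\tfrac12|\varphi_n|^2$ is itself an admissible scalar test function for the mass equation, whereas the momentum equation has so far only been tested against time-\emph{independent} fields in Lemma \ref{lem:Contintimemom}. Throughout, the assumption $\rho\in C_{w^*}([0,\infty);L^\infty(\R^d))$ and the weak continuity of $\rho u$ play the central role at the time endpoints.

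For \eqref{eq:testcontBpt}, I would first record the integrated form of the continuity equation: since $\rho\in C_{w^*}([0,\infty);L^\infty(\R^d))$, the distributional mass equation upgrades, via a time cut-off, to
\[\int_{\R^d}\rho(t_2)\psi(t_2)\dd x-\int_{\R^d}\rho(t_1)\psi(t_1)\dd x=\int_{t_1}^{t_2}\int_{\R^d}\rho\,\partial_s\psi+\rho u\cdot\nabla\psi\dd x\dd s\]
for every scalar $\psi$ smooth in time and compactly supported in space. Choosing $\psi=\tfrac12|\varphi_n|^2$ and using $u\cdot\nabla(\tfrac12|\varphi_n|^2)=u\otimes\varphi_n:\nabla\varphi_n$ yields the identity for $\varphi_n$. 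Letting $n\to\infty$, the two boundary terms converge because $\varphi_n(t_i)\to\varphi(t_i)$ in $H^1(\R^d)$, hence $|\varphi_n(t_i)|^2\to|\varphi(t_i)|^2$ in $L^1$, paired against $\rho(t_i)\in L^\infty$, while the bulk terms converge by Lemma \ref{lem:convproducts} (multiplied by $\rho\in L^\infty$). Finally, letting $t_1\to0^+$ and $t_2\to t$, the boundary terms converge using $\varphi\in C([0,T);L^2_\sigma)$ together with $\rho\in C_{w^*}$, and the bulk integral extends to $(0,t)$ by the $L^1$-integrability furnished by Lemma \ref{lem:convproducts}; this gives \eqref{eq:testcontBpt} for every $t\in(0,T)$.

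For \eqref{eq:testmomBpt} the test field is vector-valued and time-dependent, so I would build it from Lemma \ref{lem:Contintimemom}(i). Fixing $t_1,t_2\in I_w$ and a partition $t_1=s_0<\dots<s_N=t_2$, write $F(s):=\int_{\R^d}\rho(s)u(s)\cdot\varphi_n(s)\dd x$ and telescope $F(t_2)-F(t_1)=\sum_k\big(F(s_{k+1})-F(s_k)\big)$. Splitting each increment by freezing the test field, the part with $\varphi_n(s_k)$ held fixed is evaluated by Lemma \ref{lem:Contintimemom}(i) and sums to $\int_{t_1}^{t_2}\int_{\R^d}\rho u\otimes u:\nabla\varphi_n-\nu\,\nabla u:\nabla\varphi_n$, while the part carrying the increment $\varphi_n(s_{k+1})-\varphi_n(s_k)$ sums, by the $C^1$-in-time regularity of $\varphi_n$ and the uniform $L^2$-bound on $\rho u$, to $\int_{t_1}^{t_2}\int_{\R^d}\rho u\cdot\partial_s\varphi_n$. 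Refining the partition gives \eqref{eq:testmomBpt} for $\varphi_n$ on $[t_1,t_2]$. Passing $n\to\infty$ uses Lemma \ref{lem:convproducts} for $\partial_s\varphi_n\cdot u$ and $u\otimes u:\nabla\varphi_n$, the convergence $\nabla\varphi_n\to\nabla\varphi$ in $L^2$ against $\nabla u\in L^2$, and $\varphi_n(t_i)\to\varphi(t_i)$ in $L^2$ against $\rho u(t_i)\in L^2$. The endpoint limit $t_1\to0^+$ is the delicate one: here $\rho(t_1)u(t_1)\rightharpoonup\rho_0u_0$ only weakly in $L^2$, by the weak continuity $\rho u\in C_w(I_w;L^2_\sigma)$ of Lemma \ref{lem:Contintimemom}(ii), and this must be paired against the strong convergence $\varphi(t_1)\to\varphi(0)$ in $L^2$ to recover $\int_{\R^d}\rho_0u_0\cdot\varphi(0)$; taking $t=t_2\in I_w$ then gives \eqref{eq:testmomBpt} for almost every $t\in(0,T)$.

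The main obstacle I anticipate is the momentum identity, and within it the upgrade from time-independent to time-dependent test fields together with the endpoint matching. The telescoping/Abel-summation step is where the structure of $\partial_t(\rho u)$ is effectively recovered without ever differentiating $\rho u$ in time, and the pairing of the only weakly convergent $\rho u$ against the strongly convergent $\varphi$ as $t_1\to0^+$ is precisely the point where the weak continuity of Lemma \ref{lem:Contintimemom}(ii) is indispensable. By contrast, \eqref{eq:testcontBpt} is comparatively routine, since $\tfrac12|\varphi_n|^2$ is a bona fide scalar test function for the mass equation and no discretization in time is needed.
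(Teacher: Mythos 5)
Your proposal is correct, but the two halves have different relationships to the paper's proof. For \eqref{eq:testcontBpt} you follow essentially the same route as the paper: test the distributional mass equation (upgraded to an integrated form via time cut-offs and the weak-$*$ continuity of $\rho$) with $\tfrac12|\varphi_n|^2$, use $u\cdot\nabla(\tfrac12|\varphi_n|^2)=u\otimes\varphi_n:\nabla\varphi_n$, pass $n\to\infty$ by Lemma \ref{lem:convproducts}, then send the endpoints to $0$ and $t$. For \eqref{eq:testmomBpt}, however, your mechanism is genuinely different. The paper returns to the distributional momentum formulation: it tests with $\psi_\eps(\tau)\varphi_n(\tau,x)$ (sharp time cut-off times smooth approximant) and uses the continuity of $\Phi_\varphi=\int\rho u\cdot\varphi\dd x$ on $I_w$, i.e.\ the Lebesgue-point property \eqref{eq:lebesguephi}, to identify the boundary terms at $t,s\in I_w$, exactly as in the mass-equation step. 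You instead never go back to the weak formulation: you take the already-integrated identity of Lemma \ref{lem:Contintimemom}(i) for \emph{time-independent} fields and upgrade it to time-dependent $\varphi_n$ by telescoping/Abel summation over a partition, recovering $\int\rho u\cdot\partial_s\varphi_n$ from the increments of $\varphi_n$. This is a legitimate and arguably more elementary organization (it isolates all contact with the PDE inside Lemma \ref{lem:Contintimemom}), at the price of a Riemann-sum refinement argument; the paper's route avoids that refinement by doing the cut-off argument once more. Two details of your telescoping step deserve explicit mention: the partition points must be chosen in $I_w$ (possible, since $I_w$ has full measure), and the identification of the limit of the increment sums $\sum_k\langle \rho u(s_{k+1}),\varphi_n(s_{k+1})-\varphi_n(s_k)\rangle$ with $\int\langle\rho u(\tau),\partial_\tau\varphi_n(\tau)\rangle\dd\tau$ requires not just the uniform $L^2$ bound on $\rho u$ that you cite there, but also the weak continuity $\rho u\in C_w(I_w;L^2_\sigma)$ of Lemma \ref{lem:Contintimemom}(ii) (pointwise a.e.\ weak convergence of $\rho u(s_{k+1}(\tau))$ to $\rho u(\tau)$ plus domination); since you invoke exactly this weak continuity at the endpoint $t_1\to0^+$, this is an imprecision of attribution rather than a gap.
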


\begin{proof}
    We start with the verification of (\ref{eq:testcontBpt}).
    Fix $0<s<t<\infty$ and let $\psi \in C^\infty((0,\infty))$ be a cut-off function with $\psi \equiv 1$ on $[s,t]$
    and $\psi \equiv 0$ on $(0,s/2) \cup (2t,\infty).$ Clearly, $\psi \cdot \varphi \in H^1(0,2t;H^1(\R^d))$ and we 
    can find a sequence $\{\varphi_n\} \subset C^\infty([0,2t];C^\infty(\R^d))$ such that $\varphi_n \to \psi 
    \varphi$ in $H^1(0,2t;H^1(\R^d))$. In particular, $\varphi_n \to \varphi$ in $H^1(s,t;H^1(\R^d))$ and due to
    the embedding $H^1(s,t;H^1(\R^d)) \hookrightarrow C^{1/2}([s,t];H^1(\R^d))$, for every $\tau \in [s,t]$, we have 
    \begin{align}\label{eq:ptwsconvergence}
        \varphi_n(\tau) \underset{n\to\infty}{\to} \varphi(\tau) \quad \textrm{ in } L^2(\R^d).
    \end{align}
    Since $\rho \in 
    C_{w^*}([0,\infty);L^\infty(\R^d))$, Lebesgue's differentiation theorem  implies that, 
    for every $n \in \N$, 
\begin{align}\label{eq:formulaptwsn}
        \frac{1}{2} \int_{\R^d} \rho(t) |\varphi_n(t)|^2 \dd x - \frac{1}{2} \int_{\R^d} \rho(s) |\varphi_n(s)|^2 
        \dd x = \int_s^t\int_{\R^d} \rho \partial_t \varphi_n \cdot \varphi_n + \rho u \otimes \varphi_n : 
        \nabla \varphi_n \dd x \dd \tau.
    \end{align} 
    Note that we used the identity
    \begin{align*}
        \frac{1}{2} u \cdot \nabla |\varphi_n|^2 = u \otimes \varphi_n : \nabla \varphi_n.
    \end{align*}
    Thanks to \eqref{eq:ptwsconvergence}, the terms on the left-hand side of \eqref{eq:formulaptwsn} converge to
    \begin{align*}
        \frac{1}{2} \int_{\R^d} \rho(t) |\varphi(t)|^2 \dd x - \frac{1}{2} \int_{\R^d} \rho(s) |\varphi(s)|^2 \dd x.
    \end{align*}
    With Lemma \ref{lem:convproducts}, we have
    \begin{align*}
        \int_s^t\int_{\R^d} \rho \partial_t \varphi_n \cdot \varphi_n \dd x
        + \rho u \otimes \varphi_n : \nabla \varphi_n \dd x \dd \tau 
        \underset{n\to\infty}{\to} 
        \int_s^t\int_{\R^d} \rho \partial_t \varphi \cdot \varphi 
        + \rho u \otimes \varphi : \nabla \varphi \dd x \dd \tau.
    \end{align*}
    This gives (\ref{eq:testcontBpt}) for every $t,s>0.$  
    
    It remains to prove the statement for $s=0.$ Observe that 
    \begin{align*}
        \lim_{s\to 0}\frac{1}{2} \int_{\R^d} \rho(s) |\varphi(s)|^2 \dd x=
        \frac{1}{2} \int_{\R^d} \rho_0 |\varphi(0)|^2 \dd x \quad 
    \end{align*}
    since
    \begin{equation*}
       W^{1,p}(0,\infty;L^2(\R^d)) \hookrightarrow C([0,T);L^2(\R^d)) \quad \textrm{ and } \quad
       \rho \in C_{w^*}([0,T);L^\infty(\R^d)).
    \end{equation*}
    The claim follows now from the fact that $\partial_t \varphi \cdot \varphi, \rho u \otimes \varphi : \nabla \varphi
    \in L^1((0,t) \times \R^d)$.

    We briefly discuss the proof of (\ref{eq:testmomBpt}). For every $\varphi \in \mathcal{B}_{q,T}$, we define
    \begin{align*}
        \Phi_\varphi \colon [0,T) \to \R, \quad t \mapsto \int_{\R^d} \rho(t) u(t) \cdot \varphi(t) \dd x.
    \end{align*}
    Since $\varphi \in C([0,T);L^2_\sigma(\R^d)),$ Lemma \ref{lem:Contintimemom} implies that $\Phi_\varphi|_{I_w}$
    is continuous. Hence, for every $t \in I_w \setminus \{0\}$
    \begin{align}\label{eq:lebesguephi}
        \lim_{\eps \to 0} \frac{1}{2 \eps} \int_{t- \eps}^{t+\eps} \Phi_\varphi(\tau) \dd \tau = \Phi_\varphi(t).
    \end{align}
    Let $\varphi \in \mathcal{B}_{q,T}$, $t,s \in I_w \setminus \{0\}$ and $\psi,\{\varphi_n\}_{n \in \N}$
    be as above. As for the continuity equation, using \eqref{eq:lebesguephi}, we have
    \begin{align}\label{eq:testmomphin}
    \begin{aligned}
        - \int_0^t \int_{\R^d} & \rho_1 \partial_t \varphi_n \cdot u+ 
        \rho u \otimes u : \nabla \varphi_n \dd x \dd s  + \int_{\R^d}  \rho(t)u(t) 
        \cdot \varphi_n(t) \dd x \\
        = & - \nu \int_0^t \int_{\R^d} \nabla u : \nabla \varphi_n \dd s 
        + \int_{\R^d} \rho(s) u(s) \cdot \varphi_n(s) \dd x.
    \end{aligned}
    \end{align}
    With Lemma \ref{lem:convproducts}, we see that (\ref{eq:testmomphin}) remains true when $n \to \infty$ for 
    every $t,s \in I_w \setminus \{0\}.$ Let $(s_n)_{n \in \N} \subset I_w \setminus \{0\}$ with $s_n 
    \underset{n \to \infty}{\to} 0.$ Note that \eqref{eq:testmomphin} is true with $s_n$ and $\varphi.$
    Hence, using that $\Phi_\varphi(s_n) \underset{n \to \infty}{\to} \Phi_\varphi(0)$ and Lemma \ref{lem:convproducts} 
    completes the proof.
\end{proof} 

Now that we have shown that $\mathcal{B}_{q,T}$ is a good admissible set, we show that the strong solutions we consider 
belong to this set.
\begin{lemma}\label{Lem:AdminTest}
    Let $(\rho_0,u_0) \in L^\infty(\R^d) \times L^2_\sigma(\R^d)$ and let $T>0.$ Suppose that $(\rho_1,u_1)$ is a weak
    Leray-Hopf solution with respect to the initial data $(\rho_0,u_0)$ and that $(\rho_2,u_2)$ is a strong 
    solution of \eqref{eq:ns} on $(0,T)$ with respect to the same initial data $(\rho_0,u_0).$ Then there exists a
    $q>1$ such that $u_2\in\mathcal{B}_{q,T}$ and \eqref{eq:testcontBpt}-
    \eqref{eq:testmomBpt} hold for $\varphi=u_2$ and $(\rho,u)=(\rho_1,u_1)$.
\end{lemma}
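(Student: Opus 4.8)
The statement has two parts: that $u_2 \in \mathcal{B}_{q,T}$ for a suitable $q>1$, and that the identities \eqref{eq:testcontBpt}--\eqref{eq:testmomBpt} hold with $\varphi=u_2$ and $(\rho,u)=(\rho_1,u_1)$. Since the second part is an immediate consequence of Lemma \ref{lem:testins} once membership is established, the plan is to concentrate entirely on verifying $u_2\in\mathcal{B}_{q,T}$. Recall that $u_2\in\mathcal{S}^\eta(0,T)$ for some $\eta>d/2-1$, so $A_1^\eta(t;\rho_2,u_2)$ and $A_2^\eta(t;\rho_2,u_2)$ are finite for every $t<T$, and that by Theorem \ref{thm:PZZ} (or Theorem \ref{thm:novacuum}) the density obeys $c_0\le\rho_2\le C_0$. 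Throughout I abbreviate the time weight by $\sigma(s)=\min\{1,s\}$, whose only singular behaviour is $\sigma(s)=s$ near $s=0$.

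First I would establish $u_2\in X_{d,q,T}$. Using $\rho_2\ge c_0$, the bound on $\int_0^t \sigma^{1-\eta}\norm{\sqrt{\rho_2}\pt u_2}_2^2\,\dd s$ coming from $A_1^\eta$ transfers to $\int_0^t \sigma^{1-\eta}\norm{\pt u_2}_2^2\,\dd s<\infty$, and $A_1^\eta$ also controls $\int_0^t \sigma^{1-\eta}\norm{\nabla^2 u_2}_2^2\,\dd s$. To remove the weight I would apply Hölder's inequality with exponents $2/q$ and $2/(2-q)$ to write, for $f\in\{\norm{\pt u_2}_2,\norm{\nabla^2 u_2}_2\}$,
\begin{align*}
\int_0^T f^q\,\dd s \le \Big(\int_0^T \sigma^{1-\eta}f^2\,\dd s\Big)^{q/2}\Big(\int_0^T \sigma^{-(1-\eta)q/(2-q)}\,\dd s\Big)^{(2-q)/2}.
\end{align*}
The last integral is finite near $s=0$ precisely when $(1-\eta)q/(2-q)<1$, i.e.\ $q(2-\eta)<2$; since $\eta>d/2-1$ forces $2-\eta<2$, such a $q\in(1,2)$ exists (for $\eta\ge1$ any $q\in(1,2)$ works). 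Combined with $u_2\in L^\infty(0,T;L^2)$ — again from $\rho_2\ge c_0$ and the energy equality — and the elementary bound $\norm{u_2}_{H^2}\lesssim\norm{u_2}_2+\norm{\nabla^2 u_2}_2$, this yields $u_2\in W^{1,q}(0,T;L^2_\sigma)\cap L^q(0,T;H^2)$, where divergence-freeness of $\pt u_2$ follows from $\dive u_2=0$.

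In dimension $d=3$ there is the additional requirement $u_2\in L^4(0,T;L^6)$. Here I would use the Sobolev embedding $\dot H^1(\R^3)\hookrightarrow L^6(\R^3)$ together with the supremum bound in $A_1^\eta$, which gives $\norm{u_2(s)}_6\lesssim\norm{\nabla u_2(s)}_2\lesssim\sigma(s)^{(\eta-1)/2}$; hence $\norm{u_2(s)}_6^4\lesssim\sigma(s)^{2(\eta-1)}$, and $\int_0^T \sigma^{2(\eta-1)}\,\dd s<\infty$ exactly because $2(\eta-1)>-1\Leftrightarrow\eta>1/2=d/2-1$. This is the sharpest point of the argument. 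It then remains to check $u_2\in H^1(\eps,\eps^{-1}\wedge T;H^1)$ for every $\eps>0$: on $[\eps,\eps^{-1}\wedge T]$ the weights $\sigma^{1-\eta}$ and $\sigma^{2-\eta}$ are bounded above and below by positive constants, so the finiteness of $A_1^\eta,A_2^\eta$ directly furnishes $u_2\in L^2(\eps,\cdot;H^2)$, $\pt u_2\in L^\infty(\eps,\cdot;L^2)$ and $\pt\nabla u_2\in L^2(\eps,\cdot;L^2)$, whence $u_2\in H^1(\eps,\eps^{-1}\wedge T;H^1)$.

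Collecting these facts gives $u_2\in\mathcal{B}_{q,T}$ for the chosen $q$, and Lemma \ref{lem:testins} applied with $\varphi=u_2$ and $(\rho,u)=(\rho_1,u_1)$ then yields \eqref{eq:testcontBpt}--\eqref{eq:testmomBpt}. I expect the main obstacle to be the interplay between the singular time weight $\sigma^{1-\eta}$ near $t=0$, coming from the parabolic smoothing encoded in $\mathcal{S}^\eta$, and the unweighted target Bochner spaces defining $X_{d,q,T}$: this is what forces $q$ close to $1$, and in three dimensions it is the borderline integrability of $\norm{u_2}_6^4$ that pins the admissible range of $\eta$ to exactly $\eta>d/2-1$.
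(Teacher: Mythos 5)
Your proof is correct and follows essentially the same route as the paper: verify that $u_2\in\mathcal{B}_{q,T}$ using the weighted bounds encoded in $\mathcal{S}^\eta$ (i.e.\ the estimates collected in \eqref{eq:StrongEst}) together with the absence of vacuum $\rho_2\geq c_0$, and then invoke Lemma \ref{lem:testins}. The paper's own proof is a one-line reference to Lemma \ref{lem:testins} and the appendix regularity estimates; your write-up supplies exactly the details it leaves implicit — the Hölder argument fixing $q<2/(2-\eta)$, the Sobolev-embedding check of $L^4(0,T;L^6)$ in dimension three (correctly identified as the step pinning $\eta>1/2$), and the local $H^1(\eps,\eps^{-1}\wedge T;H^1)$ membership.
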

\begin{proof}
The proof follows from employing Lemma \ref{lem:testins} thanks to the extra regularity properties on the strong solution that we derive 
in Lemma \ref{lem:adddecayprop}.
\end{proof}

\section{Weak-strong uniqueness result}\label{sec:wsu}
In this section, we prove Theorem \ref{thm:weakstrong1}.  Let $(\rho_1,u_1)$ be a 
\textit{Leray-Hopf} weak solution with respect to the initial data $(\rho_0,u_0)$, and let
$(\rho_2,u_2)$ be a strong solution of (\ref{eq:ns}) as in Definition \ref{def:strongsol} 
with respect to the same initial data $(\rho_0,u_0)$ with maximal existence interval $(0,T^*)$.

\subsection{Relative energy method}
The main goal of this section is the verification of \eqref{eq:RE}, more precisely, we prove Lemma \ref{lem:badgronwall} below. To this end, we introduce the notation 
\begin{align*}
    \delta \rho = \rho_1-\rho_2, \quad \delta u = u_1 - u_2 \quad
    \textrm{ and } \quad \dot{u}_2 = \partial_t u_2 + (u_2 \cdot \nabla)u_2.
\end{align*}
In this section, we fix $(\rho_1,u_1)$ and $(\rho_2,u_2)$ such that $\rho_1,\rho_2 \in C_{w^*}([0,T^*);L^\infty(\R^d))$ 
and $u_2 \in C([0,T^*);L^2(\R^d))$. 

\begin{lemma}\label{lem:badgronwall}
We have \begin{align}\label{eq:badgronwalleq}
        \frac{1}{2}\norm{\sqrt{\rho_1(t)} \delta u(t)}_2^2 + \nu \int_0^t 
        \norm{\nabla \delta u}^2_2 \dd s \leq -\int_0^t \int_{\R^d} \delta \rho 
        \dot{ u_2} \cdot \delta u +\rho_1 \delta u \otimes \delta u : \nabla u_2 
        \dd x \dd s
    \end{align}
    for almost every $t \in (0,T^*).$
\end{lemma}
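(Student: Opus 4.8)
The plan is to derive \eqref{eq:badgronwalleq} by expanding the relative energy into its three natural pieces and controlling each one by an identity that is \emph{already at our disposal}, thereby bypassing the forbidden $\partial_t\delta u$, $\partial_t\rho_1$ manipulations flagged after \eqref{eq:RE}. Writing $u_1=u_2+\delta u$, we split
\[
    E_\rel(t)=\tfrac12\int_{\R^d}\rho_1(t)|u_1(t)|^2\dd x-\int_{\R^d}\rho_1(t)u_1(t)\cdot u_2(t)\dd x+\tfrac12\int_{\R^d}\rho_1(t)|u_2(t)|^2\dd x.
\]
For the first piece I would use the energy inequality of Definition \ref{def:lerayhopfsol}(iii) for the Leray-Hopf solution $(\rho_1,u_1)$; this is the only inequality in the argument and the source of the ``$\le$''. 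For the third piece I would use the continuity identity \eqref{eq:testcontBpt} with $\varphi=u_2$, and for the cross term the momentum identity \eqref{eq:testmomBpt} with $\varphi=u_2$. Both are legitimate precisely because Lemma \ref{Lem:AdminTest} furnishes a $q>1$ with $u_2\in\mathcal B_{q,T}$, so the strong solution is an admissible test function for $(\rho_1,u_1)$.

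Inserting the three relations into the decomposition, the boundary contributions $\int_{\R^d}\rho_0|u_0|^2\dd x$ cancel (with coefficients $\tfrac12-1+\tfrac12=0$). The two terms $\int\rho_1\partial_t u_2\cdot u_2$ (from \eqref{eq:testcontBpt}) and $-\int\rho_1\partial_t u_2\cdot u_1$ (from \eqref{eq:testmomBpt}) combine, via $u_2-u_1=-\delta u$, into $-\int_0^t\!\int_{\R^d}\rho_1\partial_t u_2\cdot\delta u$, and the convective terms $\int\rho_1u_1\otimes u_2:\nabla u_2-\int\rho_1u_1\otimes u_1:\nabla u_2$ combine into $-\int_0^t\!\int_{\R^d}\rho_1u_1\otimes\delta u:\nabla u_2$. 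The viscous contributions reduce to $-\nu\int_0^t\norm{\nabla u_1}_2^2+\nu\int_0^t\!\int\nabla u_1:\nabla u_2$, and the elementary identity $|\nabla\delta u|^2=|\nabla u_1|^2-2\nabla u_1:\nabla u_2+|\nabla u_2|^2$ turns this into $-\nu\int_0^t\norm{\nabla\delta u}_2^2-\nu\int_0^t\!\int\nabla u_2:\nabla\delta u$. Moving the first viscous term to the left, I reach
\begin{multline*}
    E_\rel(t)+\nu\int_0^t\norm{\nabla\delta u}_2^2\dd s\le-\nu\int_0^t\!\int_{\R^d}\nabla u_2:\nabla\delta u\dd x\dd s\\
    -\int_0^t\!\int_{\R^d}\rho_1\partial_t u_2\cdot\delta u\dd x\dd s-\int_0^t\!\int_{\R^d}\rho_1u_1\otimes\delta u:\nabla u_2\dd x\dd s.
\end{multline*}

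The decisive move is then to eliminate the residual viscous cross term by invoking the equation of the strong solution itself. Since $u_2\in\mathcal S^\eta(0,T^*)$ solves $\rho_2\dot u_2-\nu\Delta u_2+\nabla P_2=0$ pointwise, I would test this against $\delta u$: using $\dive\delta u=0$ (so $\int\nabla P_2\cdot\delta u=0$) and integrating by parts gives $-\nu\int\nabla u_2:\nabla\delta u=\int\rho_2\dot u_2\cdot\delta u$. Substituting this and expanding $\dot u_2=\partial_t u_2+(u_2\cdot\nabla)u_2$, the $\partial_t u_2$ terms collapse via $\rho_2-\rho_1=-\delta\rho$ to $-\int\delta\rho\,\partial_t u_2\cdot\delta u$. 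For the convective remainder I would again write $\rho_2=\rho_1-\delta\rho$ to peel off $-\int\delta\rho(u_2\cdot\nabla)u_2\cdot\delta u$, and treat what is left using the convention $a\otimes b:\nabla c=b\cdot(a\cdot\nabla)c$ (equivalently the identity $u\otimes\varphi:\nabla\varphi=u\cdot\nabla\tfrac12|\varphi|^2$ of Lemma \ref{lem:testins}), together with $((u_2-u_1)\cdot\nabla)u_2=-(\delta u\cdot\nabla)u_2$:
\[
    \int\rho_1(u_2\cdot\nabla)u_2\cdot\delta u-\int\rho_1u_1\otimes\delta u:\nabla u_2=\int\rho_1\,\delta u\cdot\big((u_2-u_1)\cdot\nabla\big)u_2=-\int\rho_1\,\delta u\otimes\delta u:\nabla u_2.
\]
Collecting the $\delta\rho$ terms into $-\int\delta\rho\,\dot u_2\cdot\delta u$ yields exactly the right-hand side of \eqref{eq:badgronwalleq}.

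The genuinely delicate points are analytic, and most are discharged beforehand: the admissibility of $u_2$ (Lemma \ref{Lem:AdminTest}, Section \ref{sec:admin}) and the no-vacuum result (Theorem \ref{thm:novacuum}), which makes $u_1=\sqrt{\rho_1}u_1/\sqrt{\rho_1}\in L^\infty(0,\infty;L^2)$ and hence $\delta u\in L^\infty_tL^2_x\cap L^2_t\dot H^1_x$. Within the present argument the main obstacle I anticipate is justifying the time-integrated pairing of the strong momentum equation with $\delta u$ up to $t=0$: the control on $\int\rho_2|\dot u_2|^2$ coming from $\mathcal S^\eta$ carries the degenerate weight $\sigma(s)^{1-\eta}$, so $s\mapsto\norm{\rho_2\dot u_2(s)}_2$ need not be integrable near $0$. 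I would circumvent this by performing the computation on $[\eps,t]$, where every quantity is regular and the integrations by parts are justified by $\nabla^2u_2,\nabla P_2\in L^2$ and $\nabla\delta u\in L^2$, and then letting $\eps\to0$, using $\delta u(0)=0$ together with the weak continuity of $s\mapsto\int_{\R^d}\rho_1(s)u_1(s)\cdot u_2(s)\dd x$ from Lemma \ref{lem:Contintimemom} to pass to the limit in the boundary terms.
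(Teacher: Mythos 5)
Your proposal is correct and is essentially the paper's own argument: the same three-term expansion of $E_\rel(t)$, the weak solution's energy inequality as the sole source of the inequality sign, Lemma \ref{Lem:AdminTest} to justify using $u_2$ as a test function in both \eqref{eq:testcontBpt} and \eqref{eq:testmomBpt}, and a pointwise-in-time testing of the strong momentum equation against a divergence-free field, followed by the same algebraic identities. The only differences are bookkeeping: you test the strong equation against $\delta u$ instead of $u_1$ and keep $\rho_1$ in the term $\tfrac12\int\rho_1|u_2|^2$, which lets you bypass the strong solution's energy equality and the $i=2$ continuity identity that the paper invokes; moreover, your cautious $[\eps,t]$ truncation is harmless but not actually needed, since that step involves no temporal boundary terms and $s\mapsto\norm{\dot u_2(s)}_2\norm{\delta u(s)}_2$ is integrable near $0$ by Lemma \ref{lem:adddecayprop} (using $\eta>0$ for $d=2$, $\eta>1/2$ for $d=3$).
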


\begin{proof}
   For $t>0$, consider the functional
    \begin{align*}
        E_{rel}(t):= \frac{1}{2} \int_{\R^d} \rho_1(t) \abs{u_1(t)-u_2(t)}^2 \dd x 
        = \frac{1}{2} \norm{\sqrt{\rho_1(t)} \delta u(t)}^2_2.
    \end{align*}
  Using the energy inequality \eqref{eq:EE}, for every $t \in (0,T^*)$, we have
    \begin{align}\label{eq:Erel}
         E_{rel}(t) = & \frac{1}{2}\norm{\sqrt{\rho_1(t)}u_1(t)}_2^2
        -\skal{\rho_1(t)u_1(t),u_2(t)}_2 +\frac{1}{2}\norm{\sqrt{\rho_1(t)}u_2(t)}_2^2\nonumber\\
        = &\frac{1}{2}\norm{\sqrt{\rho_1(t)}u_1(t)}_2^2 -\skal{\rho_1(t)u_1(t),u_2(t)}_2
        +\frac{1}{2}\norm{\sqrt{\rho_2(t)}u_2(t)}_2^2 + \frac{1}{2} 
        \skal{\delta \rho(t)u_2(t),u_2(t)}\\
        \leq & \norm{\sqrt{\rho_0}u_0}_2^2
        - \nu \int_0^t \left( \norm{\nabla u_1}^2_2 + \norm{\nabla u_2}^2_2 \right) 
        \dd s -\skal{\rho_1(t)u_1(t),u_2(t)}_2 
        + \frac{1}{2}\skal{\delta \rho(t)u_2(t),u_2(t)}_2.\nonumber
    \end{align}
    Thanks to Lemma \ref{Lem:AdminTest}, we infer that, for every $t \in (0,T^*)$ and $i=1,2$, 
    \begin{align*}
        \frac{1}{2}\int_{\R^d} \rho_i(t)|u_2(t)|^2 \dd x - 
        \frac{1}{2}\int_{\R^d} \rho_0|u_2(0)|^2 \dd x 
        = \int_0^t \int_{\R^d} \rho_i \partial_t u_2 \cdot u_2 \dd x \dd s
        + \int_0^t \int_{\R^d} \rho_i u_i \otimes u_2 : \nabla u_2 \dd x \dd s. 
    \end{align*}
    Subtraction yields
    \begin{align}\label{eq:12}
        \frac{1}{2}\int_{\R^d} \delta \rho(t)|u_2(t)|^2 \dd x = \int_0^t \int_{\R^d}  \delta \rho 
        \partial_t u_2 \cdot u_2 + \delta \rho u_2 \otimes u_2 : \nabla u_2 
        + \rho_1 \delta u \otimes u_2 : \nabla u_2  \dd x \dd s.
    \end{align} Furthermore, Lemma \ref{Lem:AdminTest} implies that, 
    for almost every $t \in (0,T^*)$,
    \begin{align}\label{eq:13}
        - \int_0^t \int_{\R^d} & \rho_1 \partial_t u_2 \cdot u_1
        + \rho_1 u_1 \otimes u_1 : \nabla u_2 \dd x 
        \dd s  + \int_{\R^d}  \rho_1(t)u_1(t) \cdot u_2(t) \dd x \\
        = & - \nu \int_0^t \int_{\R^d} \nabla u_1 :\nabla u_2 \dd x \dd s 
        + \int_{\R^d} \rho_0 |u_0|^2 \dd x.\nonumber
    \end{align}Using \eqref{eq:12}-\eqref{eq:13} in \eqref{eq:Erel}, for almost every 
    $t \in (0,T^*),$ we have
    \begin{align*}
        E_{rel}(t) \leq & \norm{\sqrt{\rho_0}u_0}_2^2
        - \nu \int_0^t \left( \norm{\nabla u_1}^2_2 + \nu \norm{\nabla u_2}^2_2 \right) 
        \dd s + \nu \int_0^t \nabla u_1 : \nabla u_2 \dd x \dd s\\
        & - \norm{\sqrt{\rho_0}u_0}_2^2 - \int_0^t \int_{\R^d} \rho_1 \partial_t u_2 
        \cdot u_1+ \rho_1 u_1 \otimes u_1 : \nabla u_2 \dd x \dd s \\
        &+ \int_0^t \int_{\R^d} \delta \rho \partial_t u_2 \cdot u_2 + \delta \rho u_2 
        \otimes u_2 : \nabla u_2 + \rho_1 \delta u \otimes u_2 : \nabla u_2  \dd x \dd s.
    \end{align*}
   Using the identity
    \begin{align*}
        (v \cdot \nabla) v \cdot u = v \otimes u : \nabla v, \quad u,v \in H^1(\R^d),
    \end{align*}
     multiplying the momentum equation 
    of $u_2$ by $u_1$ and integrating by parts, for every $t \in (0,T^*)$, we obtain
    \begin{align*}
        \int_0^t \int_{\R^d} \rho_2 \partial_t u_2 \cdot 
        u_1 + \rho_2 u_2 \otimes u_1 : \nabla u_2 \dd x \dd s 
        = - \nu \int_0^t \int_{\R^d} \nabla u_1 : \nabla u_2 \dd x \dd s.
    \end{align*}
Hence, for almost every $t \in (0,T^*)$,
    \begin{align*}
        E_{rel}(t) \leq & - \nu \int_0^t \norm{\nabla \delta u}^2_2 \dd s + 
        \int_0^t \int_{\R^d} \rho_2 \partial_t u_2 \cdot u_1 + \rho_2 u_2 \otimes u_1 
        : \nabla u_2 \dd x \dd s \\
        &- \int_0^t \int_{\R^d} \rho_1 \partial_t u_2 \cdot u_1+ \rho_1 u_1 \otimes u_1 
        : \nabla u_2 \dd x \dd s \\
        &+ \int_0^t \int_{\R^d} \delta \rho \partial_t u_2 \cdot u_2 + 
        \delta \rho u_2 \otimes u_2 : \nabla u_2 
        + \rho_1 \delta u \otimes u_2 : \nabla u_2  \dd x \dd s.
    \end{align*}
The identities
    \begin{align*}
        - \delta \rho \partial_t u_2 \cdot \delta u =
        \rho_2 \partial_t u_2 \cdot u_1 - \rho_1 \partial_t u_2 \cdot u_1 
        +\delta \rho \partial_t u_2 \cdot u_2
    \end{align*}
    and 
    \begin{align*}
        - \delta \rho (u_2 \cdot \nabla) u_2 \cdot \delta u 
        - \rho_1 \delta u \otimes \delta u : \nabla u_2 
        = & - \delta \rho u_2 \otimes \delta u : \nabla u_2  - \rho_1 \delta u \otimes 
        \delta u : \nabla u_2 \\ 
        = & \:\:\rho_2 u_2 \otimes u_1 : \nabla u_2  - \rho_1 u_1 \otimes u_1 : \nabla u_2\\
        & +\delta \rho u_2 \otimes u_2 : \nabla u_2 + \rho_1 \delta u \otimes 
        u_2 : \nabla u_2,
    \end{align*}
    yield, for almost every $t \in (0,T^*)$,
    \begin{align*}
        E_{rel}(t) \leq - \nu \int_0^t \norm{\nabla \delta u}^2_2 \dd s 
        -\int_0^t \int_{\R^d} \delta \rho \dot{ u_2} \cdot \delta u +\rho_1 \delta u \otimes 
        \delta u : \nabla u_2 \dd x \dd s
    \end{align*}
    
\end{proof}

\subsection{Stability estimate}
We now bound the r.h.s. of \eqref{lem:badgronwall}. One key point is the following $W^{-1,p}$-stability estimate.

\begin{proposition}\label{lem:wminus4stab}
Let $T \in (0,T^*).$ For every $\beta >0$, there exists a constant $C=C(\beta,T)>0$  such that, for every $t \in (0,T)$,
    \begin{align}
        \abs{\int_0^t \int_{\R^d} \delta \rho \dot{u}_2 \cdot \delta u \dd x
        \dd s} \leq C \int_0^t \sigma(s)^{1/2+\beta} \left( \int_0^s \tau^{-2\beta} \norm{\rho_1
        \delta u(\tau)}_p^2 \dd \tau \right)^{1/2} 
        \norm{\nabla (\dot{u}_2 \cdot \delta u)}_q \dd s,
    \end{align}
where $p=4$ if $d=2$ and $p=3$ if $d=3$, and $q$ denotes the conjugate index of $p$.
\end{proposition}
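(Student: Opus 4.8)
The plan is to control the non-quadratic term $\int_0^t \int_{\R^d} \delta \rho\, \dot{u}_2 \cdot \delta u \dd x \dd s$ by exploiting the representation formula \eqref{eq:formulatransintro} for $\delta\rho$ along the flow $X$ generated by $u_2$. Since $u_2$ is a strong solution, Lemma \ref{lem:propertiesflow} guarantees that $X(t,\cdot)$ is a volume-preserving bi-Lipschitz diffeomorphism with $\det DX=1$. First I would use the transport equation $\partial_t \delta\rho + \dive(\delta\rho\, u_2) = -\dive(\rho_1\delta u)$ together with the fact that $\delta\rho(0)=0$ to write, for the smooth approximations,
\begin{align*}
    \delta\rho(s,X(s,x)) = -\int_0^s \dive\bigl((\rho_1\delta u)(\tau,X(\tau,x))\bigr)\dd\tau.
\end{align*}
After composing with the flow and using the change of variables from Remark \ref{rmk:transruleinc} (measure preservation), the spatial integral at fixed time $s$ becomes an integral against the test field $\dot{u}_2\cdot\delta u$ evaluated along characteristics, giving the first inequality in \eqref{eq:productestimateintro}.

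The heart of the argument is then a duality/integration-by-parts step. Because $\delta\rho$ appears through its divergence-representation, the pairing $\int_{\R^d}\delta\rho\,(\dot u_2\cdot\delta u)\dd x$ is morally a $W^{-1,p}$–$W^{1,q}$ duality: one moves the divergence onto the smooth factor $\dot u_2\cdot\delta u$, producing $\nabla(\dot u_2\cdot\delta u)$ paired against $\rho_1\delta u$ transported along the flow. With Hölder's inequality at exponents $p,q$ (where $p=4,\ q=4/3$ for $d=2$ and $p=3,\ q=3/2$ for $d=3$, matching the Sobolev embeddings available for the strong solution) I would bound the inner time integral by $\int_0^s \norm{(\rho_1\delta u)(\tau)}_p\dd\tau$ and the remaining factor by $\norm{\nabla(\dot u_2\cdot\delta u)(s)}_q$. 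The flow-dependent constant $C$ absorbs $\norm{X}_{\lip}$ and $\norm{X^{-1}}_{\lip}$, which by Lemma \ref{lem:propertiesflow} are controlled by $\exp(\int_0^T\norm{\nabla u_2}_\infty\dd s)$, finite by \eqref{est:lip}.

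To reach the precise weighted form in the statement, I would insert the singular weight $\sigma(\tau)^{-\beta}\sigma(\tau)^{\beta}$ (equivalently $\tau^{-\beta}\tau^{\beta}$ near the origin) and apply Cauchy–Schwarz in $\tau$ on $[0,s]$. This converts $\int_0^s\norm{(\rho_1\delta u)(\tau)}_p\dd\tau$ into $\bigl(\int_0^s \tau^{-2\beta}\norm{(\rho_1\delta u)(\tau)}_p^2\dd\tau\bigr)^{1/2}\bigl(\int_0^s\tau^{2\beta}\dd\tau\bigr)^{1/2}$, and the second factor yields the prefactor $\sigma(s)^{1/2+\beta}$ after noting $\int_0^s\tau^{2\beta}\dd\tau \simeq s^{1+2\beta}$ and truncating by $\sigma$ to keep the estimate uniform on $[0,T]$. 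The free parameter $\beta>0$ is precisely what this weighting introduces, and it is left open so that it can later be tuned against the time-integrability of the strong-solution quantities $A_1^\eta,A_2^\eta$ when closing the Grönwall argument.

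The main obstacle I anticipate is rigorously justifying the representation formula and the integration by parts at the level of weak solutions: $\delta\rho$ is merely $L^\infty$ and $\rho_1\delta u$ need not be smooth, so \eqref{eq:formulatransintro} and the transfer of the divergence onto $\dot u_2\cdot\delta u$ must be obtained via mollification, using the commutator estimate \eqref{eq:commutatorestimate} and the renormalization property from Remark \ref{rmk:com} to pass to the limit. Ensuring that the test field $\dot u_2\cdot\delta u$ genuinely lies in $L^q(0,T;W^{1,q})$ — which requires the Gagliardo–Nirenberg-type control of $\nabla(\dot u_2\cdot\delta u)$ afforded by the $\mathcal S^\eta$ bounds on $u_2$ and the $\dot H^1$ regularity of $\delta u$ — is the delicate analytic point, and the choice of the conjugate pair $(p,q)$ is dictated exactly by making both the transported density factor and this gradient factor integrable.
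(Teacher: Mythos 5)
Your plan coincides, in outline, with the paper's own proof: mollify the densities, use the DiPerna--Lions representation of $(\delta\rho)_\eps$ along the flow of $u_2$, move the divergence onto $\dot u_2\cdot\delta u$ by integration by parts (with the Jacobian factors $DX\,DX^{-1}$ absorbed into $C(T)$ via Lemma \ref{lem:propertiesflow} and \eqref{est:lip}), apply H\"older at the exponents $(p,q)$, and finish with the $\tau^{\beta}\tau^{-\beta}$ Cauchy--Schwarz trick to produce the weight $\sigma(s)^{1/2+\beta}$. These are precisely Steps 1, 5 and 6 of the paper.

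There is, however, a genuine gap in your limit passage, and it is not the point you flag. You hope to verify that $\dot u_2\cdot\delta u$ lies in $L^q(0,T;W^{1,q})$ and to treat the mollification argument as routine, but this unweighted integrability cannot be derived from the $\mathcal{S}^\eta$ class: Lemma \ref{lem:adddecayprop} only provides the \emph{weighted} bounds $\int_0^T\sigma(s)^{1-\eta}\norm{\dot u_2}_2^2\dd s+\int_0^T\sigma(s)^{2-\eta}\norm{\nabla\dot u_2}_2^2\dd s<\infty$, so every $L^2_x$- or $W^{1,q}_x$-type norm of $\dot u_2\cdot\delta u$ may blow up non-integrably as $s\to 0$ when $\eta$ is small (in $2d$ any $\eta>0$ is allowed). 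This breaks two intermediate steps of your argument as written: (i) Fubini's theorem, needed to exchange the $\tau$- and $x$-integrations after composing with the flow, requires the triple integrand to be in $L^1$, and the natural bound involves $\int_0^T\norm{(\dot u_2\cdot\delta u)(s,X(s,\cdot))}_2\dd s$, which is not finite near $s=0$ from the available estimates; (ii) the commutator error $\delta R_\eps$ is controlled only in $L^2_{t,x}$, so showing it vanishes requires pairing it against a quantity such as $\int_0^T\sigma(s)^{1/2}\norm{(\dot u_2\cdot\delta u)(s)}_2\dd s$, whose finiteness itself must be proved through the weighted bounds. The paper's resolution is a second approximation parameter: smooth cut-offs $\varphi_N$ vanishing on $[0,N^{-2}]$, which confine all $s$-integrals away from the origin so that arbitrary powers of $\sigma(s)$ can be inserted. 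Fubini is then justified for each fixed $N$ (with an $N$-dependent constant of order $N^{2-\eta}$), the commutator term is shown to vanish as $\eps\to0$ \emph{uniformly in $N$} --- this is exactly where the hypotheses $\eta>0$ for $d=2$ and $\eta>1/2$ for $d=3$ enter, through Gagliardo--Nirenberg and the time weights --- and only afterwards is the limit $N\to\infty$ taken by dominated convergence. Without this truncation-in-time device and the resulting double limit $\eps\to0$, $N\to\infty$, your mollification argument does not close.
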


\begin{proof}
Fix $T \in (0,T^*).$ We divide the proof in several steps.

\textbf{Step 1.} (Regularization) For a fixed $\eps > 0$ and $i=1,2$, we have
\begin{align*}
    \partial_t (\rho_i)_\eps + \dive((\rho_i)_\eps u_i) = 
    \dive((\rho_i)_\eps  u_i-(\rho_i  u_i)_\eps)=:R_{i,\eps}.
\end{align*}
Here and in the following $f_\eps$ denotes the mollification in space of a function $f \in L^1_\loc((0,T)
\times \R^d).$ Subtraction yields
\begin{align}\label{eq:trasnportdeltarho}
    \partial_t (\delta \rho)_\eps + \dive((\delta \rho)_\eps u_2)=
    - \dive((\rho_1)_\eps \delta u ) + \delta R_\eps,
\end{align}
where $\delta R_\eps = R_{1,\eps}-R_{2,\eps}$. Recall that there exists a $C>0$ such that, almost everywhere,
\begin{align*}
    -C \leq \delta \rho(t,x) \leq C,
\end{align*}
 since $\delta \rho \in C_{w^*}([0,T];L^\infty(\R^d))$ and that for every $t \in (0,T)$ and almost everywhere on $\R^d$, we have
\begin{align}\label{eq:pointwise}
    (\delta \rho(t))_\eps \to \delta \rho(t).
\end{align}

\textbf{Step 2.} (Truncation at zero)
We introduce a family of smooth, non-decreasing cut-off functions  $\varphi_N \colon [0,\infty) \to \R$ parametrized by 
$N \in \N$ such that $\varphi_N=0$ for $0 \leq t \leq 1/N^2$ and $\varphi_N=1$ for $t \geq 1/N.$ Since $\dot{u}_2 \cdot 
\delta u \in L^1((0,T) \times \R^d)$ and  
\begin{align*}
    |(\delta \rho(t))_\eps (\varphi_N \dot{u}_2)\cdot \delta u| \leq C |\dot{u}_2 \cdot \delta u|,
\end{align*}
the dominated convergence theorem yields
\begin{align*}
    \int_0^t \int_{\R^d} \delta \rho \dot{u}_2 \cdot \delta u \dd x \dd s
    = \lim_{N \to \infty} \lim_{\eps \to 0} \int_0^t \int_{\R^d} (\delta 
    \rho)_\eps (\varphi_N \dot{u}_2) \cdot \delta u \dd x \dd s.
\end{align*}
Since $(\delta \rho)_\varepsilon$ solves the transport equation \eqref{eq:trasnportdeltarho}, denoting with $X \colon [0,T^*) \times \R^d \to \R^d$ the flow associated to $u_2$, using \cite[Proposition 6]{AmbrosioCrippa2014}, for every $x \in \R^d$ and every $s \in (0,T)$, we have
\begin{align*}
    (\delta \rho)_\eps (s,X(s,x)) = \int_0^s [-\dive((\rho_1)_\eps
    \delta u) + \delta R_\eps]|_{(\tau,X(\tau,x))} \dd \tau.
\end{align*}
With a change of variables, for every \(t \in (0,T)\), we obtain 
\begin{align}\label{eq:domconv}
     \int_0^t \int_{\R^d} & \delta \rho \dot{u}_2 \cdot \delta u \, dx \, ds \nonumber \\
    & = \lim_{N \to \infty} \lim_{\eps \to 0}  \int_0^t \int_{\R^d} \int_0^s
    (-\operatorname{div}((\rho_1)_\eps\delta u) + \delta R_\eps)|_{(\tau,X(\tau,x))} 
    (\varphi_N \dot{u}_2 \cdot \delta u)
    |_{(s,X(s,x))} \, d\tau \, dx \, ds 
\end{align}

\textbf{Step 3.} (Fubini's theorem)
In the next step, we exchange the order of integration. To this aim, it is 
necessary to show that the mapping defined by
\begin{align*}
    (s,\tau,x) \mapsto
    (-\dive((\rho_1)_\eps\delta u) + \delta R_\eps)|_{(\tau,X(\tau,x))} 
    (\varphi_N \dot{u}_2 \cdot \delta u)|_{(s,X(s,x))} =: I_1(\tau,x)I_2(s,x)
\end{align*}
is in $L^1((0,T) \times (0,T) \times \R^d).$ Using \textit{Hölder's inequality} in time and space, we get
\begin{align*}
     \int_0^T \int_0^t \int_{\R^d} \abs{I_1(\tau,x) I_2(s,x)} \dd x \dd \tau \dd s 
     & \leq \int_0^T \int_0^T \norm{I_1(\tau)}_2 \norm{I_2(s)}_2 \dd\tau \dd s\\
     & \leq T \left(\int_0^T \norm{I_1(\tau)}_2 \dd \tau \right)^\frac{1}{2}
     \left(\int_0^T \norm{I_2(s)}_2 \dd s \right)^\frac{1}{2}
\end{align*}
With the transformation rule from Remark \ref{rmk:transruleinc}, Remark \ref{rmk:com} \textit{Hölder's inequality} in space and time, and the 
inequalities
\begin{align*}
    |\dive((\rho_1(\tau))_\eps\delta u(\tau))| \leq \frac{C}{\eps}|\delta u(\tau)|
    \quad \textrm{ and } \quad
    \norm{R_{i,\eps}(\tau)}_2 \leq C \norm{\rho_i(\tau)}_\infty \norm{u_i(\tau)}_{H^1},
\end{align*}
we estimate $I_1$ as
\begin{align*}
    \int_0^T \norm{I_1(\tau)}_2 \dd \tau &\leq 2 \int_0^T \left( \frac{C^2}{\eps^2}\norm{\delta u(\tau)}_2^2 
    + \norm{\delta R_\eps(\tau)}_2^2\right) \dd \tau\\
    &\leq 2 \int_0^T \left( \frac{C^2}{\eps^2}\norm{\delta u(\tau)}_2^2 + \sum_{i=1,2}
    \norm{\rho_i(\tau)}_\infty^2 \norm{u_i(\tau)}_{H^1}^2 \right) \dd \tau.
\end{align*}
Since $\delta u \in L^2(0,T;H^1(\R^d))$, by Theorem \ref{thm:novacuum0}, we conclude
\begin{align*}
    \int_0^T \norm{I_1(\tau)}_2 \dd \tau < \infty.
\end{align*}
For the second term, we work differently for $d=2$ and $d=3$. In the case $d=2$, employing \textit{Hölder's inequality}, \textit{Gagliardo-Nirenberg inequality} and the definition of $\varphi_N$, we obtain
\begin{align*}
    \int_0^T \left(\int_{\R^2} |I_2(s,x)|^2 \dd x\right)^\frac{1}{2} \dd s 
    & = \int_{N^{-2}}^T \left(\int_{\R^d} \abs{\varphi_N \dot{u}_2\cdot \delta u}^2 \dd x \right)^{1\slash2}\dd s\\
    & \leq \int_{N^{-2}}^T \norm{\dot{u}_2}_4\norm{\delta u}_4\dd s\\
    & \leq  \int_{N^{-2}}^T \norm{\dot{u}_2}^{1\slash2}_2 \norm{\nabla \dot{u}_2}^{1\slash2}_2
    \norm{\delta u}^{1\slash2}_2 \norm{\nabla \delta u}^{1\slash2}_2 \dd s\\
    & \leq T^{1\slash2} \left(\int_{N^{-2}}^T \norm{\dot{u}_2}_2 \norm{\nabla \dot{u}_2}_2
    \norm{\delta u}_2 \norm{\nabla \delta u}_2 \dd s\right)^{1\slash2}\\
    & \leq T^{1\slash2}N^{2-\eta} \left(\int_{N^{-2}}^T \sigma(s)^{2-\eta} \norm{\dot{u}_2}_2 
    \norm{\nabla \dot{u}_2}_2 \norm{\delta u}_2 \norm{\nabla \delta u}_2 \dd s\right)^{1\slash2}\\
    & \leq T^{1\slash2}N^{2-\eta} \left( \int_0^T \sigma(s)^{4-2\eta} \norm{\dot u_2(s)}_2^2 
    \norm{\nabla \dot u_2(s)}_2^2   \dd s\right)^\frac{1}{4}\\
    &\quad \times \left( \int_0^T  \norm{ \delta u(s)}_2^2 \norm{\nabla \delta u(s)}_2^2 \dd s\right)^\frac{1}{4}\\
    & \leq T^{1\slash2}N^{2-\eta} \left(\sup_{s \in [0,t]} \sigma(s)^{2-\eta} \norm{\dot u_2(s)}_2^2\right)^\frac{1}{4} \left(\int_0^T \sigma(s)^{2-\eta} \norm{\nabla \dot u_2(s)}_2^2   \dd s\right)^\frac{1}{4}\\
    & \quad \times\left(  \norm{ \delta u}_{\infty,2} \norm{\nabla \delta u }_{2,2}\right)^{1\slash2}
\end{align*}
In the fourth step of the previous computation, we can introduce an arbitrary powers of $\sigma(s)$ under the integral 
since we are integrating away from zero. In the three-dimensional case, we omit the precise computation 
of the time weight. Using the interpolation inequality
\begin{align*}
    \norm{f}_4 \leq \norm{f}_2^{1/4} \norm{f}_6^{3/4} \leq \norm{f}_2^{1/4} \norm{\nabla f}_2^{3/4}, \quad
    f \in H^1(\R^3),
\end{align*}
we have
\begin{align*}
    \int_0^T \norm{I_2(s)}_2^2 \dd s 
    & \leq \int_{N^{-2}}^T \norm{\dot{u}_2}_4\norm{\delta u}_4\dd s\\
    & \leq  \int_{N^{-2}}^T \norm{\dot{u}_2}^{1\slash4}_2 \norm{\nabla \dot{u}_2}^{3\slash4}_2
    \norm{\delta u}^{1\slash4}_2 \norm{\nabla \delta u}^{3\slash4}_2 \dd s\\
    & \leq  \left( \int_{N^{-2}}^T  \norm{\dot u_2}_2^2 \dd s \right)^\frac{1}{8}  \left( \int_{N^{-2}}^T \norm{\nabla \dot u_2}_2^2  \dd s\right)^\frac{3}{8} \\& \quad \quad \quad \quad\times\left( \int_0^T  \norm{ \delta u}_2^2 \dd s\right)^\frac{1}{8} \left( \int_0^T\norm{\nabla \delta u}_2^2   \dd s\right)^\frac{3}{8} \\
    & \leq C \left( \int_0^T  \sigma(s)^{1-\eta}\norm{\dot u_2}_2^2 \dd s \right)^\frac{1}{8}  \left( \int_0^T \sigma(s)^{2-\eta}\norm{\nabla \dot u_2}_2^2  \dd s\right)^\frac{3}{8}\\&\quad\times \norm{\delta u}_{\infty,2}^{1\slash 4} \norm{\nabla \delta u}_{2,2}^{3\slash 4}.
\end{align*}
In both cases, using Lemma \ref{lem:adddecayprop}, we conclude 
\begin{align*}
   \int_0^T \norm{I_2(s)}_2^2 \dd s < \infty.
\end{align*}
Then, for every $t \in (0,T)$, employing Fubini's Theorem, we rewrite \eqref{eq:domconv} as
\begin{equation}\label{eq:domconv2}
\begin{split}
    \int_0^t \int_{\R^d} & \delta \rho \dot{u}_2 \cdot \delta u \dd x \dd s \\
    & = \lim_{N \to \infty} \lim_{\eps \to 0}  \int_0^t  \int_0^s \int_{\R^d}
    (-\operatorname{div}((\rho_1)_\eps\delta u) + \delta R_\eps)|_{(\tau,X(\tau,x))} 
    (\varphi_N \dot{u}_2 \cdot \delta u)|_{(s,X(s,x))} \dd x \dd \tau \dd s \\
    & =  \lim_{N \to \infty} \lim_{\eps \to 0}  \int_0^t  \int_0^s \int_{\R^d}
    (-\operatorname{div}((\rho_1)_\eps\delta u) + \delta R_\eps)|_{(\tau,x)} 
    (\varphi_N \dot{u}_2 \cdot \delta u)|_{(s,X(s,X^{-1}(\tau,x))} \dd x \dd \tau \dd s \\
    & =: \lim_{N \to \infty} \lim_{\eps \to 0} \; \mathcal{I}^2_{\eps,N}(t) + \mathcal{I}^1_{\eps,N}(t)
\end{split}
\end{equation}
where for every $t \in (0,T)$
\begin{align*}
    & \mathcal{I}^1_{\eps,N}(t) = \int_0^t  \int_0^s \int_{\R^d} \delta R_\varepsilon|_{(\tau,x)}\varphi_N\dot{u}_2\cdot 
    \delta u |_{(s,X(s,X^{-1}(\tau,x))} \dd x \dd \tau \dd s,
    \\& \mathcal{I}^2_{\eps,N}(t)= \int_0^t  \int_0^s \int_{\R^d} -\operatorname{div}((\rho_1)_\eps\delta u) |_{(\tau,x)}
    \varphi_N\dot{u}_2\cdot \delta u |_{(s,X(s,X^{-1}(\tau,x))} \dd x \dd \tau \dd s.
\end{align*}
\textbf{Step 4.}(Commutator estimate) We aim to show that
\begin{align}\label{eq:estimateI1}
    \sup_{t \in (0,T)} \abs{\mathcal{I}^1_{\eps,N}(t)} \to 0 \quad \text{as } \eps \to 0,N \to \infty.
\end{align}
Using \textit{Hölder's inequality} in space and time yields, for every $t \in (0,T)$,
\begin{align*}
    |\mathcal{I}^1_{\eps,N}(t)| & \leq \int_0^T \int_0^s \norm{\delta R_\eps(\tau)}_2
    \norm{(\varphi_N \dot{u}_2 \cdot \delta u)(s)}_2 \dd \tau \dd s\\
    & \leq \int_{N^{-2}}^T \left(\int_0^s \norm{\delta R_\eps(\tau)}_2^2 \dd \tau\right)^{1/2} 
    \sigma(s)^{1/2} \norm{(\dot{u}_2 \cdot \delta u)(s)}_2 \dd s\\
    & \leq  \left(\int_0^T \norm{\delta R_\eps(\tau)}_2^2 \dd \tau \right)^{1/2}
    \int_{N^{-2}}^T \sigma(s)^{1/2} \norm{(\dot{u}_2 \cdot \delta u)(s)}_2 \dd s.
\end{align*}
In order to conclude \eqref{eq:estimateI1}, we claim that for every $\gamma > 0$ there exists an $\eps_0>0$ such that, for 
every $\eps \in (0,\eps_0)$ and every $N \in \N$,
\begin{align*}
    \sup_{t \in (0,T)} \abs{\mathcal{I}^1_{\eps,N}(t)} \leq \gamma.
\end{align*}
Since 
\begin{align*}
    \left(\int_0^T \norm{\delta R_\eps(\tau)}_2^2 \dd \tau \right)^{1/2} \leq 
    \norm{R_{1,\eps}}_{L^2(0,T;L^2)} + \norm{R_{2,\eps}}_{L^2(0,T;L^2)} \underset{\eps \to 0}{\to} 0,
\end{align*}
it suffices to show that 
\begin{align*}
    \int_{N^{-2}}^T \sigma(s)^{1/2} \norm{(\dot{u}_2 \cdot \delta u)(s)}_2 \dd s
\end{align*}
is bounded uniformly in $N.$

In the case $d=2$, using \textit{Gagliardo-Nirenberg inequality}, we have 
\begin{align*}
    \int_{N^{-2}}^T \sigma(s)^\frac{1}{2}\norm{(\dot{u}_2 \cdot \delta u)(s)}_2 \dd s 
    & \leq \int_{N^{-2}}^T \sigma(s)^{-\frac{1}{4}+\frac{\eta}{2}}\sigma(s)^{\frac{1}{4}-\frac{\eta}{4}}
    \sigma(s)^{\frac{1}{2}-\frac{\eta}{4}}\norm{(\dot{u}_2 \cdot \delta u)(s)}_2
    \dd s \\ 
    & \leq \left( \int_{N^{-2}}^T \sigma(s)^{\frac{1}{2}-\frac{\eta}{2}}\sigma(s)^{1-\frac{\eta}{2}} \norm{\dot 
    u_2}_2 \norm{\nabla \dot u_2}_2 \dd s\right)^\frac{1}{2} \\
    &\quad \times\left(\int_{N^{-2}}^T \sigma(s)^{-\frac{1}{2}+\eta} \norm{\delta u}_2 \norm{\nabla \delta u}_2 
    \dd s \right)^\frac{1}{2}\\ 
    & := \mathcal{T}_1^\frac{1}{2}\times\mathcal{T}_2^\frac{1}{2}.
\end{align*}
For the first term, we use \textit{Cauchy-Schwarz inequality} and the bounds of Lemma \ref{lem:adddecayprop} to get
\begin{align*}
    \mathcal{T}_1 \leq \left( \int_{N^{-2}}^T \sigma(s)^{1-\eta}\norm{\dot u_2}^2_2 
    \dd s\right)^\frac{1}{2} \left( \int_{N^{-2}}^T \sigma(s)^{2-\eta} \norm{\nabla \dot u_2}^2_2
    \dd s\right)^\frac{1}{2} \leq C.
\end{align*}
For the second term, using again \textit{Cauchy-Schwarz inequality}, we obtain
\begin{align*}
    \mathcal{T}_2 & \leq \left(\int_{N^{-2}}^T \sigma(s)^{-1+2\eta} \norm{\delta u}^2_2 \dd s\right)^\frac{1}{2} 
    \left(\int_{N^{-2}}^T \norm{\nabla \delta u}^2_2 \dd s\right)^\frac{1}{2}\\ 
    & \leq \norm{\delta u}_{\infty,2} \norm{\nabla \delta u}_{2,2} \left(\int_{N^{-2}}^T \sigma(s)^{-1+2\eta} 
    \dd s\right)^\frac{1}{2} 
\end{align*}
which is uniformly bounded in $N$ as $\eta >0$. In the three-dimensional case, we have    
\begin{align*}
    \int_{N^{-2}}^T \sigma(s)^\frac{1}{2}\norm{(\dot{u}_2 \cdot \delta u)(s)}_2 \dd s & 
    \leq \int_{N^{-2}}^T \sigma(s)^{\frac{1}{8}-\frac{\eta}{8}}\sigma(s)^{\frac{3}{4}-\frac{3\eta}{8}} 
    \sigma(s)^{-\frac{3}{8}+\frac{\eta}{2}}\norm{(\dot{u}_2 \cdot \delta u)(s)}_2 \dd s \\ 
    & \leq \left( \int_{N^{-2}}^T \sigma(s)^{\frac{1}{4}-\frac{\eta}{4}}\sigma(s)^{\frac{3}{2}-\frac{3\eta}{4}} 
    \norm{\dot u_2}^{1\slash2}_2 \norm{\nabla \dot u_2}^{3\slash2}_2 \dd s\right)^\frac{1}{2} \\
    & \quad\times\left(\int_{N^{-2}}^T \sigma(s)^{-\frac{3}{4}+\eta} \norm{\delta u}^{1\slash2}_2 
    \norm{\nabla \delta u}^{3\slash2}_2 \dd s\right)^\frac{1}{2}\\ 
    & := \mathcal{T}_3^\frac{1}{2}  \times T_4^\frac{1}{2}
\end{align*}
For the first term, we use \textit{Hölder's inequality} and the bounds of Lemma \ref{lem:adddecayprop} to get
\begin{align*}
    \mathcal{T}_3 \leq \left( \int_{N^{-2}}^t \sigma(s)^{1-\eta}\norm{\dot u_2}^2_2 
    \dd s\right)^\frac{1}{4} \left( \int_{N^{-2}}^t \sigma(s)^{2-\eta} \norm{\nabla \dot u_2}^2_2
    \dd s\right)^\frac{3}{4} \leq C.
\end{align*}
For the second term, using again \textit{Hölder's inequality}, we get
\begin{align*}
    \mathcal{T}_4 & \leq \left(\int_{N^{-2}}^T \sigma(s)^{-3+4\eta} \norm{\delta u}^2_2 
    \dd s\right)^\frac{1}{4} \left(\int_{N^{-2}}^T \norm{\nabla \delta u}^2_2 \dd s\right)^\frac{3}{4}\\ 
    & \leq \norm{\delta u}_{\infty,2} \norm{\nabla \delta u}_{2,2} \left(\int_{N^{-2}}^t \sigma(s)^{-3+4\eta} \dd 
    s\right)^\frac{1}{2} 
\end{align*}
which is uniformly bounded in $N$ as $\eta >1\slash 2$. The claim is now proved.

\textbf{Step 5:} (Analysis of the source terms). In the following, let $p=4$ if $d=2$ and $p=3$ if $d=3$. Let $q$ be the conjugate index of $p$, i.e. $1/p+1/q=1.$ With the transformation rule and integrating by parts, we have
\begin{align*}
    \begin{split}
    |\mathcal{I}^2_{\eps,N}(t)| \leq & \bigg | \int_0^t \int_0^s \int_{\R^d}((\rho_1)_\eps\delta u)(\tau,x)
    DX(s,X^{-1}(\tau,x))\\
    & \cdot DX^{-1}(\tau,x) \nabla (\varphi_N \dot{u}_2 \cdot \delta u) (s,X(s,X^{-1}(\tau,x))) \dd x \dd \tau \dd s 
    \bigg | \\
    \leq & C(T) \int_0^t \int_0^s \norm{(\rho_1)_\eps\delta u}_p \dd \tau
    \norm{\nabla(\varphi_N \dot{u}_2 \cdot \delta u)(s)}_{q} \dd s  \\
    \leq & C(T) \int_0^t \int_0^s \norm{(\rho_1)_\eps\delta u}_p \dd \tau
    \norm{\nabla(\dot{u}_2 \cdot \delta u)(s)}_q \dd s,
    \end{split}
\end{align*}
where we used Lemma \ref{lem:propertiesflow} which implies that
\begin{align*}
    C(T) := \sup_{\tau,s \in (0,T)}\norm{DX(s,X^{-1}(\tau,x)) DX^{-1}(\tau,x)}_\infty < \infty,
\end{align*}
and the estimate
\begin{equation*}
       \norm{\nabla(\varphi_N \dot{u}_2 \cdot \delta u)(s)}_q \leq \varphi_N(s) \norm{\nabla( \dot{u}_2 \cdot \delta u)(s)}_q \leq \norm{\nabla(\dot{u}_2 \cdot \delta u)(s)}_q.
\end{equation*}

\textbf{Step 6.}(Conclusion) Let $\gamma > 0$ be arbitrary and fix $\eps_0 > 0$ such that $|\mathcal{I}^1_{\eps,N}(t)| \leq \gamma $ for every $\eps \in (0,\eps_0)$, every  $N \in \N$ and every $t \in [0,T].$ By \eqref{eq:domconv2} and the previous estimates, we have
\begin{equation*}
    \abs{\int_0^t \int_{\R^d}  \delta \rho \dot{u}_2 \cdot \delta u \, dx \, ds} 
    \leq \gamma + \lim_{\eps \to 0} C(T) \int_0^t \int_0^s \norm{(\rho_1)_\eps\delta u}_p \dd \tau
    \norm{\nabla(\dot{u}_2 \cdot \delta u)(s)}_q \dd s.
\end{equation*}
Since $(\rho_1)_\varepsilon$ is uniformly bounded and it is converging pointwise to $\rho_1$ (see \eqref{eq:pointwise}),
and since $\gamma$ is arbitrary, for every $t \in [0,T]$, we get
\begin{equation*}
    \abs{\int_0^t \int_{\R^d}  \delta \rho \dot{u}_2 \cdot \delta u \, dx \, ds} 
    \leq C(T) \int_0^t \int_0^s \norm{\rho_1 \delta u}_p \dd \tau \norm{\nabla(\dot{u}_2 \cdot \delta u)(s)}_q \dd s.
\end{equation*}
 Now, we pick $\beta <1 \slash 4$ and  we modify the right-hand side as
\begin{align*}
    \int_0^t \int_0^s  \norm{\rho_1 \delta u}_p  & \tau^\beta \tau^{-\beta} \dd \tau
    \norm{\nabla(\dot{u}_2 \cdot \delta u)(s)}_q \dd s  \\ 
    & \leq \int_0^t \left(\int_0^s \norm{\rho_1 \delta u}^2_p \tau^{-2\beta} \dd \tau \right)^\frac{1}{2} 
    \left(\int_0^s \tau^{2\beta} \dd \tau \right)^\frac{1}{2} \norm{\nabla(\dot{u}_2 \cdot \delta u)(s)}_q \dd s \\ 
    & \leq T^{1/2+\beta} \int_0^t \sigma(s)^{\frac{1}{2}+\beta} \left(\int_0^s \norm{\rho_1 \delta u}^2_p \tau^{-2\beta} 
    \dd \tau \right)^\frac{1}{2} \norm{\nabla(\dot{u}_2 \cdot \delta u)(s)}_q \dd s 
\end{align*}
which concludes the proof of Proposition \ref{lem:wminus4stab}
\end{proof}

\subsection{Proof of Theorem \ref{thm:weakstrong1}}
    Fix some $T \in (0,T^*).$ Thanks to Lemma \ref{lem:badgronwall},  for every $t \in (0,T]$, we have
    \begin{align}
     \nonumber   \frac{1}{2}\norm{\sqrt{\rho_1(t)} \delta u(t)}_2^2 + \nu \int_0^t \norm{\nabla \delta u}^2_2 \dd s 
        &= - \int_0^t \int_{\R^d} \delta \rho \dot{ u_2} \cdot \delta u \dd x \dd s \\
        & \quad \quad- \int_0^t \int_{\R^d} \rho_1 \delta u \otimes \delta u : \nabla u_2 \dd x \dd s\\
        &=: I_4(t) + I_3(t) \nonumber
    \end{align}
    where, for every $t \in (0,T)$, 
    \begin{align*}
        I_3(t) := - \int_0^t \int_{\R^d} \rho_1 \delta u \otimes \delta u : \nabla u_2 \dd x \dd s \quad \text{ and }
        \quad I_4(t) := - \int_0^t \int_{\R^d} \delta \rho \dot{ u_2} \cdot \delta u \dd x \dd s.
    \end{align*}
    With Lemma \ref{lem:adddecayprop}, we have, for every $t \in (0,T)$,
    \begin{align*}
        I_3(t) & \leq \int_0^t \norm{\nabla u_2(s)}_\infty \norm{\sqrt{\rho_1(s)} \delta u(s)}_2^2 \dd s.
    \end{align*}
   To estimate the second term, we separate the cases $d=2$ and $d=3$. For clarity, we omit to compute the constants appearing on the right-hand side. Knowing their exact value is not necessary to conclude our weak-strong uniqueness result.
\subsubsection{Proof in the 2d case}
    Using Proposition \ref{lem:wminus4stab} and the \textit{Gagliardo-Nirenberg inequality} 
    $\norm{f}^2_4\leq \norm{\nabla f}_2\norm{f}_2$, for every $t \in (0,T)$, we get
    \begin{align*}
        \abs{I_4(t)} & \leq   
        \int_0^t \left( \int_0^s \tau^{-2 \beta} \norm{\rho_1(\tau) \delta u(\tau)}_4^2 \dd \tau \right)^{1/2}
        \sigma(s)^{1/2+\beta} \norm{\nabla (\dot u_2\cdot \delta u)}_{4/3} \dd s\\ &\leq  
        \left( \int_0^t \sigma(s)^{-2 \beta} \norm{\rho_1(\tau) \delta u(s)}_4^2 \dd s \right)^{1/2}
        \int_0^t \sigma(s)^{1/2+\beta} \norm{\nabla (\dot u_2\cdot \delta u)}_{4/3} \dd s\\
        & \leq \left( \int_0^t \sigma(s)^{-4 \beta} \norm{\delta u(s)}_2^2 \dd s \right)^{1/4}
        \int_0^t \sigma(s)^{1/2+\beta} \norm{\nabla (\dot u_2\cdot \delta u)}_{4/3} \dd s
        \norm{\nabla \delta u}_{2,2}^{1/2}\\ 
        & \leq \left( \int_0^t \sigma(s)^{-4 \beta} \norm{\rho_1(\tau) \delta u(s)}_2^2 \dd s \right)^{1/4}
        \int_0^t \sigma(s)^{1/2+\beta} \norm{\nabla (\dot u_2\cdot \delta u)}_{4/3} \dd s 
        \norm{\nabla \delta u}_{2,2}^{1/2},
    \end{align*}
where, in the last step, we used that the density is bounded from below. Then, using that
    \begin{align*}
        \norm{\nabla (\dot u_2\cdot \delta u)}_{4/3} &= \norm{\nabla \dot u_2 \delta u + 
        \nabla \delta u\, \dot{u}_2}_{4/3} \\&\leq \norm{\nabla \dot u_2}_2 \norm{\delta u}_2^{1/2} 
        \norm{\nabla \delta u}^{1/2}_{L^2} + \norm{\nabla \delta u}_2 \norm{\dot u_2}_2^{1/2} 
        \norm{\nabla \dot u_2}^{1/2}_{L^2},
    \end{align*}
    we have
    \begin{align*}
        \int_0^t \sigma(s)^{1/2+\beta} \norm{\nabla (\dot u_2\cdot \delta u)}_{4/3} 
        \dd s \leq & \int_0^t \sigma(s)^{1/2+\beta+\alpha} 
        \norm{\nabla \dot u_2}_2 \sigma(s)^{-\alpha}\norm{\delta u}_2^{1/2} 
        \norm{\nabla \delta u}^{1/2} \dd s \\
        & + \int_0^t \sigma(s)^{1/2+\beta-\eps} \norm{\nabla \delta u}_2 
        \norm{\dot u_2}_2^{1/2} \norm{\nabla \dot u_2}^{1/2}  \dd s\\ 
        \leq & \left(\int_0^t \sigma(s)^{1+2\beta+2\alpha} \norm{\nabla \dot 
        u_2}_2^2 \dd s \right)^{\frac 14}
        \left( \int_0^t \sigma(s)^{-4\alpha} \norm{\delta u}_2^2 \dd s \right)^{\frac 14}
       \|\nabla \delta u\|_{2,2}^{\frac12}\\
         &+  \left( \int_0^t \sigma(s)^{4 \beta_1} \norm{\dot u_2}_2^2 \dd s \right)^{\frac14}
        \left( \int_0^t \sigma(s)^{4 \beta_2} \norm{\nabla \dot u_2}_2^2 \dd s \right)^{\frac14}
        \|\nabla \delta u\|_{2,2},
    \end{align*}
    for a suitable $\alpha \in (0,1/4),$ $\eps > 0$, and $\beta_1,\beta_2>0$ verifying $\beta_1 + \beta_2 = 1/2+\beta-\eps.$
    We choose the parameters such that
    \begin{align*}
        4 \beta_1=1-\eta,\quad 4 \beta_2=2-\eta,\quad \alpha = \beta = \frac{1}{4} - \frac{\eta}{4} \quad \text{and} \quad \eps = \frac{\eta}{4}.
    \end{align*}
    Lemma \ref{lem:adddecayprop} implies that there is $C=C(T)>0$ such that,   for every $t \in (0,T]$,
    \begin{align*}
        \int_0^t \sigma(s)^{1 - \eta} \norm{\dot u_2}_2^2 \dd s +
        \int_0^t \sigma(s)^{2 - \eta} \norm{\nabla \dot u_2}_2^2 \leq C.
    \end{align*}
We ignore the factor $t^\eps$ which is bounded on $(0,T).$ Hence
    \begin{align*} 
        \int_0^t \sigma(s)^{1/2+\beta} \norm{\nabla (\dot u_2 \cdot \delta u)}_{4/3} \dd s &
        \leq  \left( \int_0^t \sigma(s)^{-(1-\eta)} \norm{\delta u}_2^2 \dd s \right)^{\frac{1}{4}}
        \norm{\nabla \delta u}_{2,2}^{1/2} + \norm{\nabla \delta u}_{2,2}\\
        & \leq  \left( \int_0^t \sigma(s)^{-(1-\eta)} \norm{\sqrt{\rho_1} \delta u}_{2}^2 \dd s \right)^{\frac{1}{4}}
        \norm{\nabla \delta u}_{2,2}^{1/2} + \norm{\nabla \delta u}_{2,2},
    \end{align*}
    where we used that $\rho_1^{-1} \in L^\infty(\R^d)$ in the last
    line. Gathering the above estimates, for almost every $t \in (0,T]$, we have
    \begin{align*}
        |I_4(t)| \leq & \left(\int_0^t \sigma(s)^{-(1-\eta)} \norm{\sqrt{\rho_1}\delta u}_2^2 \dd s \right)^{\frac{1}{2}} 
        \norm{\nabla \delta u}_{2,2}^{\frac{1}{2}}\\
        & + \left(\int_0^t \sigma(s)^{-(1-\eta)} \norm{\sqrt{\rho_1} \delta u}_2^2 \dd s \right)^{\frac{1}{4}} 
        \norm{\nabla \delta u}_{2,2}^{\frac{3}{2}}\\
        \leq & \int_0^t \sigma(s)^{-(1-\eta)} \norm{\sqrt{\rho_1}\delta u}_2^2 \dd s
        + \frac{\nu}{2} \int_0^t \norm{\nabla \delta u}_{2,2}^2 \dd s
    \end{align*}
  The bounds on $I_3$ and $I_4$ yield, for almost every $t \in (0,T]$, 
    \begin{align*}
        \frac{1}{2}\norm{\sqrt{\rho_1}\delta u}_2^2 + \frac{\nu}{2}
        \int_0^t \norm{\nabla \delta u}^2_2 \dd s \leq \int_0^t \left(
        \norm{\nabla u}_\infty + \sigma(s)^{-(1-\eta)} \right) 
        \norm{\sqrt{\rho_1}\delta u}_2^2 \dd s.
    \end{align*}
    Since
    \begin{align*}
        t \mapsto \norm{\nabla u(t)}_\infty + \sigma(t)^{-(1-\eta)} \in L^1(0,T),
    \end{align*}
    we deduce with \textit{Grönwall's inequality} that, for almost every $t \in (0,T)$,
    \begin{align*}
        \norm{\delta u(t)}_2^2 \leq C \norm{\sqrt{\rho_1(t)}
        \delta u(t)}_2^2 \leq  C \norm{\sqrt{\rho_1(0)}
        \delta u(0)}_2^2 =  0,
    \end{align*}
    which implies that $u_1=u_2$ almost everywhere in
    $[0,T] \times \R^2$. Since $T>0$ was arbitrary, we have that $u_1=u_2$ 
    almost everywhere in $[0,T^*) \times \R^2$.
\qed
\subsubsection{Proof in the 3d case}
The proof follows the 2d case but with the embedding $\norm{f}_4\leq \norm{\nabla f}^{3\slash4}_2\norm{f}^{1\slash4}_2$.
We have
    \begin{align*}
        \abs{I_4(t)} \leq &  
        \left( \int_0^t \sigma(s)^{-2 \beta} \norm{\rho_1(\tau) \delta u(s)}_3^2 \dd s \right)^{1/2}
        C \int_0^t \sigma(s)^{1/2+\beta} \norm{\nabla (\dot u_2\cdot \delta u)}_{3/2} \dd s\\  
        \leq & \left( \int_0^t \sigma(s)^{-2 \beta} \norm{\rho_1 \delta u(s)}_2 \norm{\delta u(s)}_6 \dd s \right)^{1/2}
        \int_0^t \sigma(s)^{1/2+\beta} \norm{\nabla (\dot u_2\cdot \delta u)}_{3/2} \dd s\\
        \leq & \left( \int_0^t \sigma(s)^{-4 \beta} \norm{\rho_1 \delta u(s)}_2^2 \dd s \right)^{1/4}
        \left( \int_0^t \norm{\nabla \delta u(s)}_2^2 \dd s \right)^{1/4}
        \int_0^t \sigma(s)^{1/2+\beta} \norm{\nabla (\dot u_2\cdot \delta u)}_{3/2} \dd s,
    \end{align*}
where, in the second step, we used the interpolation inequality $\norm{f}_3 \leq \norm{f}_2^{1/2} \norm{f}_6^{1/2}$, in the third step, that $\rho_1$ is bounded and, in the last step, \textit{Hölder's inequality}.

Using the product rule and \textit{Sobolev's and Hölder's inequality} several times yield
\begin{align*}
    \norm{\nabla (\dot u_2\cdot \delta u)}_{3/2} \leq \norm{\nabla \dot u_2}_2 \norm{\delta u}_6
    + \norm{\dot u_2}_6 \norm{\nabla \delta u}_2 \leq \norm{\nabla \dot u_2}_2 \norm{\nabla \delta u}_2.
\end{align*}
Hence 
\begin{align*}
    \int_0^t \sigma(s)^{1/2+\beta} \norm{\nabla (\dot u_2\cdot \delta u)}_{3/2} \dd s
    & \leq \int_0^t \sigma(s)^{1/2+\beta} \norm{\nabla \dot u_2}_2 \norm{\nabla \delta u}_2 \dd s \\
    & \leq \left(\int_0^t \sigma(s)^{1+2 \beta} \norm{\nabla \dot u_2}_2 \dd s \right)^{1/2} 
    \left( \int_0^t \norm{\nabla \delta u}_2^2 \dd s \right)^{1/2}. 
\end{align*}
Choosing $\beta = \max(0,1/2-\eta/2)$, we have
\begin{align*}
    \beta \in [0,1/4) \quad \Leftrightarrow \quad 1/2-\eta/2 < 1/4 \quad \Leftrightarrow \quad
    1/2 < \eta.
\end{align*}
We conclude that there exists a $C=C(T)>0$ such that
\begin{align*}
    \int_0^t \sigma(s)^{1-\eta/2} \norm{\nabla (\dot u_2\cdot \delta u)}_{3/2} \dd s \leq 
    C \left( \int_0^t \norm{\nabla \delta u}_2^2 \dd s \right)^{1/2},
\end{align*}
which implies
\begin{align*}
    \abs{I_4(t)} \leq & \left( \int_0^t \sigma(s)^{-2+2\eta} \norm{\rho_1 \delta u(s)}_2^2 \dd s \right)^{1/4}
    \left( \int_0^t \norm{\nabla \delta u(s)}_2^2 \dd s \right)^{3/4} \\
    \leq & C \int_0^t \sigma(s)^{-2+2\eta} \norm{\rho_1 \delta u(s)}_2^2 \dd s 
    + \frac{\nu}{2} \int_0^t \norm{\nabla \delta u(s)}_2^2 \dd s.
\end{align*}
The rest of the proof follows exactly the $\R^2$ case. This concludes the proof of Theorem \ref{thm:weakstrong1}.
\qed
\vspace{1cm}
\noindent\textbf{Acknowledgements}
\medbreak
\noindent The authors express their gratitude to Gianluca Crippa, Raphaël Danchin and Emil Wiedemann for valuable discussions and suggestions on an earlier version of the manuscript.

\vspace{0.5cm}
\noindent\textbf{Fundings}
\medbreak
\noindent T. Crin-Barat is supported by the Alexander von Humboldt-Professorship program and the Deutsche
Forschungsgemeinschaft (DFG, German Research Foundation) under project C07 of the
Sonderforschungsbereich/Transregio 154 ``Mathematical Modelling, Simulation and
Optimization using the Example of Gas Networks" (project ID: 239904186). S. \v{S}kondri\'c and A. Violini are supported 
by the Deutsche Forschungsgemeinschaft project “Inhomogeneous and compressible fluids: statistical solutions and 
dissipative anomalies” within SPP 2410 Hyperbolic Balance Laws in Fluid Mechanics: Complexity, Scales, Randomness 
(CoScaRa).

Part of this work was done during A. Violini's visit to the Chair of Analysis at the Friedrich-Alexander-Universität Erlangen-N\"urnberg (FAU) and S. \v{S}kondri\'c's visit to the Department of Mathematics and Computer Science at the University of Basel.

\appendix

\section{Decomposition of Leray-Hopf solutions}

In this section, we show that the velocity field $u$ of a 
Leray-Hopf solution satisfies
\begin{align*}
    \frac{u}{1+|x|} \in L^1_\loc (0,T;L^1(\R^d) + L^\infty(\R^d)).
\end{align*}
This allows us to apply the DiPerna-Lions theory to study the evolution of the density in the continuity equation. We begin with the following decomposition result which is a direct consequence of \cite[Theorem 2.2]{OrtnerSuli2012}.

\begin{proposition}[\cite{OrtnerSuli2012}]\label{prop:Ortner}
    Let $u \in H^1_\loc(\R^d; \R^d)$ with $\nabla u \in L^2(\R^d),$ then $u$ can be decomposed in 
    $u= v+w$ with
    \begin{align*}
        \norm{\nabla v}_{L^\infty} \leq C \norm{\nabla u}_{L^2} \quad \text{and} \quad
        \norm{w}_{H^1}  \leq C \norm{\nabla u}_{L^2}.
    \end{align*}
\end{proposition}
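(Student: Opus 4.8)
The plan is to produce the decomposition by a single frequency cutoff, working with the gradient $g := \nabla u \in L^2(\R^d)$ rather than with $u$ itself, since $u$ is only in $H^1_\loc$ and need not be a tempered distribution. Reducing to the scalar case by treating each component of $u$ separately, I may assume $u\colon\R^d\to\R$ with $g=\nabla u\in L^2(\R^d;\R^d)$; the essential structural fact is that $g$ is curl-free, which on the Fourier side reads $\xi_j\widehat{g}_k(\xi)=\xi_k\widehat{g}_j(\xi)$.

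Fix a smooth radial cutoff $\chi$ with $\chi\equiv 1$ on $\{\abs{\xi}\leq 1\}$ and $\supp\chi\subset\{\abs{\xi}\leq 2\}$, and write $P_\mathrm{low}=\chi(D)$, $P_\mathrm{high}=1-\chi(D)$ for the associated Fourier multipliers. I would first define the high-frequency primitive $w$ directly through its Fourier transform,
\[
    \widehat{w}(\xi) := \frac{-i\,\xi\cdot\widehat{g}(\xi)}{\abs{\xi}^2}\,(1-\chi(\xi)).
\]
Because $1-\chi$ vanishes near the origin, the factor $(1-\chi)/\abs{\xi}$ is bounded, so $\widehat{w}\in L^2$ and $\norm{w}_{L^2}\leq C\norm{g}_{L^2}=C\norm{\nabla u}_{L^2}$ by Plancherel. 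The curl-free identity then gives $\xi(\xi\cdot\widehat{g})/\abs{\xi}^2=\widehat{g}$, whence $\widehat{\nabla w}=i\xi\,\widehat{w}=(1-\chi)\widehat{g}$; thus $\nabla w=P_\mathrm{high}g\in L^2$ with $\norm{\nabla w}_{L^2}\leq\norm{g}_{L^2}$. Combining the two bounds yields $w\in H^1$ with $\norm{w}_{H^1}\leq C\norm{\nabla u}_{L^2}$.

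Next I set $v:=u-w$, which is well-defined in $H^1_\loc$. By construction $\nabla v=g-P_\mathrm{high}g=P_\mathrm{low}g=\chi(D)g$ is a low-frequency truncation of an $L^2$ function; since its Fourier support lies in the fixed ball $\{\abs{\xi}\leq 2\}$, Bernstein's inequality gives $\norm{\nabla v}_{L^\infty}=\norm{\chi(D)g}_{L^\infty}\leq C\norm{\chi(D)g}_{L^2}\leq C\norm{g}_{L^2}=C\norm{\nabla u}_{L^2}$, with $C$ depending only on $d$ and $\chi$. This produces the required splitting $u=v+w$ with both constants controlled by $\norm{\nabla u}_{L^2}$.

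The step requiring the most care is the construction of $w$: since $u$ need not decay (in $d=2$ it may grow logarithmically), one cannot simply invert the gradient globally, and the non-integrable singularity of $\abs{\xi}^{-2}$ at the origin is precisely the obstruction. The frequency cutoff removes this singularity, and the curl-free structure of $g$ is exactly what makes the formally defined $w$ a genuine primitive of the high-frequency part $P_\mathrm{high}g$. Once $w$ is in hand, the bound on $v$ is routine and follows from Bernstein's inequality.
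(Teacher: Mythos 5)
Your proof is correct, but it follows a genuinely different route from the paper's. The paper's proof (following \cite{OrtnerSuli2012}) works entirely in physical space: it sets $v = u * \varphi$ for a fixed compactly supported mollifier $\varphi$ and $w = u - v$, so that $\nabla v = (\nabla u) * \varphi$ gives $\norm{\nabla v}_{L^\infty} \leq \norm{\varphi}_{L^2}\norm{\nabla u}_{L^2}$ by Hölder, while the $L^2$ bound on $w$ comes from the Poincaré-type estimate $u(x) - (u*\varphi)(x) = \int \varphi(y)\int_0^1 y\cdot \nabla u(x-ty)\,\mathrm{d}t\,\mathrm{d}y$, which costs only $\norm{\nabla u}_{L^2}$ times the first moment of $\varphi$. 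Your argument is the frequency-space counterpart: since $u\in H^1_{\loc}$ need not be tempered, you correctly avoid taking the Fourier transform of $u$ itself, filter instead $g=\nabla u\in L^2$, and rebuild the high-frequency primitive $w$ through the curl-free identity $\xi_j\widehat{g}_k=\xi_k\widehat{g}_j$; the bound $\norm{w}_{L^2}\leq C\norm{g}_{L^2}$ then rests on the boundedness of $(1-\chi(\xi))/\abs{\xi}$, which is precisely the Fourier-side manifestation of the Poincaré cancellation used in the paper, and Bernstein's inequality replaces Hölder for the $L^\infty$ bound on $\nabla v = \chi(D)g$. The paper's route is more elementary (no Plancherel, no multiplier theory, and no need for the curl-free structure, since it never has to invert a gradient), whereas yours buys a sharper structural conclusion: $\nabla v$ has compactly supported Fourier transform, so in fact all of its derivatives are bounded, and the construction generalizes directly to Littlewood--Paley/Besov-type refinements. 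Both proofs are complete; your care about where the Fourier transform is legitimate (only $\nabla u$, not $u$) and about the non-integrable singularity of $\abs{\xi}^{-2}$ at the origin addresses exactly the obstruction that the paper's mollification sidesteps in physical space.
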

\begin{proof}
The key idea to prove Proposition \ref{prop:Ortner} is to consider a space mollifier $\varphi$ and set
\begin{align*}
v = u * \varphi \quad \text{and} \quad w = u-v.
\end{align*}
Then the estimates for $v$ and $w$ follow from \textit{Hölder's inequality} and \textit{Poincaré's inequality}.
\end{proof}

Tracking carefully the time dependence, we obtain the following decomposition result for Leray-Hopf solutions.
\begin{lemma}\label{lem:decomptimedep}
    Let $(\rho,u)$ be a Leray-Hopf solution of \eqref{eq:ns} in the sense of Definition \ref{def:lerayhopfsol}, i.e. we
    have 
    \begin{align*}
        u \in L_\loc^2((0,T)\times \R^d) \quad \text{ and } \quad \nabla u \in L^2(0,T;L^2(\R^d)).
    \end{align*}
  As in the proof of Proposition \ref{prop:Ortner}, we get
    \begin{align*}
        \int_0^T \norm{\nabla v}^2_{L^\infty} \dd t \quad\text{and} \quad \int_0^T \norm{ w}^2_{H^1} \dd t < \infty,
    \end{align*}
    and, in particular, we have
    \begin{align*}
        \frac{u}{1+|x|} \in L^1(0,T;L^1(\R^d) + L^\infty(\R^d)).
    \end{align*}
\end{lemma}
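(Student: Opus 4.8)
The plan is to apply Proposition \ref{prop:Ortner} at almost every fixed time $t\in(0,T)$ to the field $u(t,\cdot)$, which satisfies $\nabla u(t,\cdot)\in L^2(\R^d)$ for a.e.\ $t$, and then to integrate the resulting pointwise-in-time bounds. The crucial structural observation is that the decomposition in Proposition \ref{prop:Ortner} is produced by a single, time-\emph{independent} spatial mollifier $\varphi$: setting $v(t,\cdot)=u(t,\cdot)*\varphi$ and $w(t,\cdot)=u(t,\cdot)-v(t,\cdot)$ gives a jointly measurable decomposition $u=v+w$ with a constant $C$ that does not depend on $t$. Squaring the estimates $\|\nabla v(t)\|_{L^\infty}\le C\|\nabla u(t)\|_{L^2}$ and $\|w(t)\|_{H^1}\le C\|\nabla u(t)\|_{L^2}$ and integrating over $(0,T)$ yields
\begin{align*}
    \int_0^T \|\nabla v\|_{L^\infty}^2\dt + \int_0^T\|w\|_{H^1}^2\dt \le 2C^2\int_0^T\|\nabla u\|_{L^2}^2\dt<\infty,
\end{align*}
since $\nabla u\in L^2(0,T;L^2(\R^d))$ by Definition \ref{def:lerayhopfsol}. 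This establishes the two displayed bounds.

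For the decay statement I would split $\frac{u}{1+|x|}=\frac{v}{1+|x|}+\frac{w}{1+|x|}$ and treat the two pieces in $L^1+L^\infty$ and $L^\infty$ respectively. For the $w$-part, since $\tfrac{1}{1+|x|}\le1$ one has $\big\|\tfrac{w(t)}{1+|x|}\big\|_{L^2}\le\|w(t)\|_{L^2}$, and the elementary embedding $L^2(\R^d)\hookrightarrow L^1(\R^d)+L^\infty(\R^d)$ (split $f=f\mathbf{1}_{\{|f|>\lambda\}}+f\mathbf{1}_{\{|f|\le\lambda\}}$, estimate the two terms by $\tfrac1\lambda\|f\|_{L^2}^2$ and $\lambda$, and optimize in $\lambda$) gives the linear bound $\big\|\tfrac{w(t)}{1+|x|}\big\|_{L^1+L^\infty}\le 2\|w(t)\|_{L^2}$. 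Integrating and using Cauchy--Schwarz in time together with the $H^1$-bound above controls this piece. For the $v$-part, I would exploit that $v(t,\cdot)$ is Lipschitz with constant $\|\nabla v(t)\|_{L^\infty}$: writing $v(t,x)=v(t,0)+\big(v(t,x)-v(t,0)\big)$ and using $|v(t,x)-v(t,0)|\le\|\nabla v(t)\|_{L^\infty}|x|$ yields
\begin{align*}
    \Big\|\frac{v(t)}{1+|x|}\Big\|_{L^\infty}\le |v(t,0)|+\|\nabla v(t)\|_{L^\infty},
\end{align*}
so this piece lies in $L^\infty(\R^d)$ for a.e.\ $t$.

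It then remains to integrate the $v$-bound in time, and this is the only genuinely delicate point. The term $\int_0^T\|\nabla v(t)\|_{L^\infty}\dt$ is finite by Cauchy--Schwarz and the first part. The anchor value is handled by $|v(t,0)|=|(u(t,\cdot)*\varphi)(0)|\le\|\varphi\|_{L^2}\,\|u(t)\|_{L^2(\supp\varphi)}$; since $u\in L^2_{\loc}((0,T)\times\R^d)$, the map $t\mapsto\|u(t)\|_{L^2(\supp\varphi)}$ is locally square-integrable on $(0,T)$, whence $t\mapsto|v(t,0)|\in L^1_{\loc}(0,T)$. Combining the three estimates gives $\frac{u}{1+|x|}\in L^1_{\loc}(0,T;L^1(\R^d)+L^\infty(\R^d))$, which is the integrability required to invoke Proposition \ref{prop:exunicontieq}. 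I expect the main obstacle to be exactly this control of the pointwise value $v(t,0)$ (equivalently, of $u$ on a fixed ball) and its behaviour as $t\downarrow0$ and, for $T=\infty$, as $t\to\infty$: the hypothesis that $u$ is only $L^2_{\loc}$ on the \emph{open} interval, together with the mere square-integrability of $\|w\|_{L^2}$ in time, yields local-in-time integrability directly, while upgrading to the full $L^1(0,T)$ up to the endpoints requires the extra input $u_0\in L^2(\R^d)$ and the weak continuity of $\rho u$ from Lemma \ref{lem:Contintimemom}, or equivalently running the argument on each $(t_0,T)$ and letting $t_0\downarrow0$.
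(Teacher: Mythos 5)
Your proof is correct and follows essentially the same route as the paper: the time-independent mollifier decomposition of Proposition \ref{prop:Ortner} applied at a.e.\ fixed time and then squared and integrated, together with the embedding $L^2(\R^d)\subset L^1(\R^d)+L^\infty(\R^d)$ for the $w$-part. You are in fact more careful than the paper on the $v$-part: the paper's proof merely asserts that the bound on $\int_0^T\norm{\nabla v}^2_{L^\infty}\dt$ yields $\int_0^T\norm{(1+|x|)^{-1}v}_{L^\infty}\dt<\infty$, which is incomplete as stated since a Lipschitz bound alone cannot control the spatially constant part of $v$, and your anchoring $|v(t,0)|\le\norm{\varphi}_{L^2}\norm{u(t)}_{L^2(\supp\varphi)}$ via $u\in L^2_{\loc}$ --- together with your observation that this argument directly gives only $L^1_{\loc}$-in-time integrability near $t=0$, so that the full $L^1(0,T)$ claim needs the additional endpoint discussion you sketch --- supplies exactly the detail the paper glosses over.
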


\begin{proof}
   Using Proposition \ref{prop:Ortner} and integrating in the time on $[0,T]$, we have
    \begin{align*}
        \int_0^T \norm{\nabla v(t)}^2_{L^\infty} \dd t \leq C   \norm{\nabla u}^2_{{2,2}}  < \infty \quad\text{and}\quad  \int_0^T\norm{w(t)}^2_{H^1}\dd t \leq C   \norm{\nabla u}^2_{{2,2}} < \infty.
    \end{align*}
The bound on $v$ implies that
    \begin{align*}
        \int_0^T \norm{(1+|x|)^{-1} v}_{L^\infty} \dd t < \infty,
    \end{align*}
and the bound $(1+|x|)^{-1} w \in L^1(0,T;L^1(\R^d) + L^\infty(\R^d))$ follows from the following chain of inclusions
    \begin{align*}
        H^1(\R^d) \subset L^2(\R^d) \subset L^1(\R^d) + L^\infty(\R^d).
    \end{align*}
 
\end{proof}

\section{additional decay estimates for strong solutions}
We present some additional properties satisfied by our strong solution $(\rho,u)$. Since the density $\rho$ stays away from zero, we have that the following quantities are bounded (according to Definition \ref{def:strongsol}):
\begin{equation}\begin{split} \label{eq:StrongEst}
    & \sup_{s \in [0,T]}  \norm{u(s)}^2_{L^2},\:\: \int_0^T \norm{\nabla u(s)}^2  \, ds,
    \\& \sup_{s \in [0,T]} \sigma(s)^{1-\eta} \norm{\nabla u(s)}^2_{L^2}, \quad \int_0^t \sigma(s)^{1-\eta} \left(\norm{\partial_t u(s)}_{L^2}^2+\norm{\nabla^2 u(s)}_{L^2}^2\right)   \, ds,  \\
    &  \sup_{s \in [0,T]} \sigma(s)^{2-\eta} \left( \norm{\partial_t u(s)}^2_{L^2}+\norm{\nabla^2 u(s)}_{L^2}^2\right) \quad \text{and} \quad  \int_0^t\sigma(s)^{2-\eta}\norm{\partial_t \nabla u(s)}_{L^2}^2 \, ds.
\end{split}\end{equation}
Using \textit{Gagliardo-Nirenberg inequality}, we can obtain additional bounds in other norms. Recall that if $d=2$ we require $\eta >0 $, while if $d=3$ assume $\eta >1\slash2 $.

\begin{lemma}\label{lem:decay0}
    Let $d=2,3$ and $(\rho,u) $ be a strong solution in the sense of Definition \ref{def:strongsol}. We have
    \begin{align*}
        \int_0^T \norm{u(s)}^2_{L^\infty} \dd s \quad \text{and} \quad  \int_0^T \sigma(s)^{2-\eta} \norm{\nabla u(s)}^4_{L^4} \dd s.
    \end{align*}
\end{lemma}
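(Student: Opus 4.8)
The plan is to bound both time integrals by the $L^2$-based quantities collected in \eqref{eq:StrongEst}, using Gagliardo--Nirenberg (Agmon-type) interpolation to convert the $L^\infty$ and $L^4$ norms into products of $\norm{u}_{L^2}$, $\norm{\nabla u}_{L^2}$ and $\norm{\nabla^2 u}_{L^2}$, and then balancing the time weights $\sigma(s)^{\alpha}$. Since $\sigma(s)=\min\{1,s\}$ behaves like $s$ near the origin, the only real issue is the integrability of the negative powers of $\sigma$ produced near $s=0$, and this is exactly where the thresholds $\eta>0$ (for $d=2$) and $\eta>1/2$ (for $d=3$) enter.

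For the first integral I would use the interpolation inequalities
\[
    \norm{u}_{L^\infty}^2 \le C\,\norm{u}_{L^2}\,\norm{\nabla^2 u}_{L^2}\quad (d=2),
    \qquad
    \norm{u}_{L^\infty}^2 \le C\,\norm{\nabla u}_{L^2}\,\norm{\nabla^2 u}_{L^2}\quad (d=3),
\]
both valid on $\R^d$ and obtainable from a standard frequency splitting. In the case $d=2$ I would factor out $\sup_s\norm{u(s)}_{L^2}$ (finite by \eqref{eq:StrongEst}) and estimate $\int_0^T\norm{\nabla^2 u}_{L^2}\dd s$ by Cauchy--Schwarz after inserting $\sigma(s)^{(1-\eta)/2}\sigma(s)^{-(1-\eta)/2}$; the weighted bound $\int_0^T\sigma^{1-\eta}\norm{\nabla^2 u}_{L^2}^2\dd s<\infty$ then pairs against $\int_0^T\sigma^{-(1-\eta)}\dd s$, which converges precisely when $\eta>0$. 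In the case $d=3$ I would instead split the product as $\big(\sigma^{-(1-\eta)/2}\norm{\nabla u}_{L^2}\big)\big(\sigma^{(1-\eta)/2}\norm{\nabla^2 u}_{L^2}\big)$ and apply Cauchy--Schwarz; the pointwise bound $\norm{\nabla u}_{L^2}^2\le C\sigma^{-(1-\eta)}$ coming from the sup estimate in \eqref{eq:StrongEst} reduces the first factor to $\int_0^T\sigma^{-2(1-\eta)}\dd s$, which is finite exactly when $\eta>1/2$.

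For the second integral I would apply the Gagliardo--Nirenberg inequalities $\norm{\nabla u}_{L^4}^4\le C\norm{\nabla u}_{L^2}^2\norm{\nabla^2 u}_{L^2}^2$ in $d=2$ and $\norm{\nabla u}_{L^4}^4\le C\norm{\nabla u}_{L^2}\norm{\nabla^2 u}_{L^2}^3$ in $d=3$. For $d=2$ the sup bound $\norm{\nabla u}_{L^2}^2\le C\sigma^{-(1-\eta)}$ turns the weighted integrand $\sigma^{2-\eta}\norm{\nabla u}_{L^2}^2\norm{\nabla^2 u}_{L^2}^2$ into $C\sigma\,\norm{\nabla^2 u}_{L^2}^2$, which is dominated on $(0,1)$ by $\sigma^{1-\eta}\norm{\nabla^2 u}_{L^2}^2$ and hence integrable. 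For $d=3$ I would absorb the factor $\norm{\nabla u}_{L^2}$ and one of the three factors $\norm{\nabla^2 u}_{L^2}$ through the pointwise bounds $\norm{\nabla u}_{L^2}\le C\sigma^{-(1-\eta)/2}$ and $\norm{\nabla^2 u}_{L^2}\le C\sigma^{-(2-\eta)/2}$; a short bookkeeping of exponents shows that the surviving weight collapses to $\sigma^{1/2}\norm{\nabla^2 u}_{L^2}^2$, which is again controlled by $\sigma^{1-\eta}\norm{\nabla^2 u}_{L^2}^2$ as soon as $\eta>1/2$.

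The interpolation step is routine; the delicate part, and the only thing I would write out in full, is the exponent bookkeeping of the weights $\sigma^{\alpha}$ near $s=0$. The key mechanism is to always use the pointwise (sup) bounds of \eqref{eq:StrongEst} to remove the most singular factor and then close the estimate with Cauchy--Schwarz or H\"older against the weighted integral bounds, so that the leftover negative power of $\sigma$ is integrable at the origin exactly under the stated restrictions on $\eta$.
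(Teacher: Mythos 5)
Your proposal is correct and follows essentially the same route as the paper: the same Gagliardo--Nirenberg/Agmon-type interpolation inequalities, followed by bookkeeping of the $\sigma$-weights against the bounds collected in \eqref{eq:StrongEst}, with $\eta>0$ (resp. $\eta>1/2$) ensuring integrability of the leftover negative powers of $\sigma$ near $s=0$. The only immaterial difference is which factor absorbs which bound: for instance, for the $L^4$ term in $d=3$ the paper applies H\"older in time with exponents $(4,4/3)$ and uses the finiteness of $\int_0^T\norm{\nabla u}_{L^2}^4\dd s$, whereas you absorb $\norm{\nabla u}_{L^2}$ and one factor of $\norm{\nabla^2 u}_{L^2}$ via the pointwise sup bounds and close with the weighted integral $\int_0^T\sigma(s)^{1-\eta}\norm{\nabla^2 u}_{L^2}^2\dd s$ --- both versions work.
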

\begin{proof}

For $d=2$, using \textit{Gagliardo-Nirenberg inequality}, we get, omitting the time dependence, 
    \begin{align*}
        \int_0^T \norm{u}_{L^\infty}^2 \dd s \leq   \int_0^T \norm{u}_{L^2} \norm{\nabla^2 u}_{L^2} \dd s& \leq \left(\int_0^T \sigma(s)^{\eta-1}\norm{u}^2_{L^2} \dd s \right)^{\frac 12}\left(\int_0^T \sigma(s)^{1-\eta}\norm{\nabla^2 u}^2_{L^2}  \dd s \right)^{\frac 12} \\ & \leq \norm{u}_{\infty,2} \left(\int_0^T \sigma(s)^{\eta-1}\right)^{\frac 12}  \left(\int_0^T \sigma(s)^{1-\eta}\norm{\nabla^2 u}^2_{L^2}  \dd s \right)^{\frac 12},
    \end{align*}
    which is bounded thanks to $\eta >0$ and \eqref{eq:StrongEst}. For the second term, again by \textit{Gagliardo-Nirenberg inequality} and \eqref{eq:StrongEst},  we have
    \begin{align*}
        \int_0^T \sigma(s)^{2-\eta} \norm{\nabla u(s)}^4_{L^4} \dd s 
        & \leq \int_0^T \sigma(s)^{2-\eta}  \norm{\nabla u}_{L^2}^2\norm{\nabla^2 u}_{L^2}^2 \dd s \\&\leq  \left(\sup_{s \in [0,T]} \sigma(s)^{2-\eta} \norm{ \nabla^2 u}_{L^2}^2 \right) \norm{\nabla u}_{L^2L^2}^2 \\&\leq C.
    \end{align*}
    This concludes the proof in the case $d=2$.
For $d=3$, using \textit{Agmon's inequality}, we have $\norm{u}^2_{L^\infty} \leq C\norm{u}_{H^1} \norm{u}_{H^2}$ and analyzing only the term of highest order, we have   
     \begin{align*}
        \int_0^T \norm{\nabla u}_2\norm{\nabla^2 u}_2 \dd s & \leq \left(\int_0^T \sigma(s)^{\eta-1}\norm{\nabla u}^2_{L^2} \dd s \right)^{\frac12}\left(\int_0^T \sigma(s)^{1-\eta}\norm{\nabla^2 u}^2_{L^2}  \dd s \right)^{\frac12} \\ & \leq  \left(  \sup_{s\in [0,T]} \sigma(s)^{1-\eta} \norm{\nabla u}^2_2\right)^{\frac12} \left(\int_0^T \sigma(s)^{2\eta-2} \dd s \right)^{\frac12} \left(\int_0^T \sigma(s)^{1-\eta}\norm{\nabla^2 u}^2_2  \dd s \right)^{\frac12},
    \end{align*}
    which is finite thanks to $\eta>1\slash 2$ and \eqref{eq:StrongEst}. The last estimate is obtained using \textit{Gagliardo-Nirenberg inequality} as follows
    \begin{align*}
        \int_0^t \sigma(s)^{2-\eta} \norm{ \nabla u}^4_{L^4}  \dd s & \leq C     \int_0^t \sigma(s)^{2-\eta} \norm{ \nabla u }_{L^2} \norm{ \nabla^2 u }^3_{L^2} \dd s \\ & \leq  C \norm{\nabla u}_{4,2} \left( \int_0^t \sigma(s)^{8\slash 3-4\eta\slash 3}  \norm{ \nabla^2 u }^4_{L^2} \dd s \right)^{\frac34}
        \\ & \leq  C \norm{\nabla u}_{4,2}  \left( \sup_{s \in [0,t]} \sigma(s)^{2-\eta} \norm{\nabla^2 u}^2_{L^2} \right)^{\frac32} \left(\int_0^t \sigma(s)^{-4\slash 3+2\eta\slash 3}   \dd s \right)^{\frac34} \\&\leq C,
    \end{align*}
    where, on the last line, we used $-4\slash 3+2\eta\slash 3 > -1$ and the fact that by \eqref{eq:StrongEst} and $\eta >1\slash 2$, we have 
    \begin{align*}
       \int_0^t \norm{ \nabla u}_2^4 \dd s \leq \left( \sup_{s \in [0,t]} \sigma(s)^{1-\eta} \norm{\nabla u}^2_{L^2} \right)^2 \int_0^t \sigma(s)^{2\eta-2} \dd s < \infty.
    \end{align*}
\end{proof}
Lemma \ref{lem:decay0} allows us to estimate the material derivative
$\dot u = \partial_t u + (u \cdot \nabla) u$ of the velocity of our strong solution.

\begin{lemma}[Additional decay properties]\label{lem:adddecayprop}
    Let $(\rho,u) $ a strong solution in the sense of Definition \ref{def:strongsol}. For $d=2,3$, we have
    \begin{align*}
        \int_0^T \sigma(s)^{1-\eta}\norm{\dot u(s)}_{L^2}^2 + \sigma(s)^{2-\eta} \norm{\nabla \dot u(s)}_{L^2}^2 \dd s \leq C.
    \end{align*}
    Moreover, since $(\rho,u)$ solves \eqref{eq:ns} as an identity in $L^2$, we have
    \begin{align}\label{eq:lip}
        \int_0^T \norm{\nabla u(s)}_{L^\infty} \dd s \leq C.
    \end{align}
\end{lemma}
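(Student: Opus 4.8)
The plan is to establish the two assertions separately, relying throughout on the uniform bounds collected in \eqref{eq:StrongEst}, on Lemma \ref{lem:decay0}, and on the fact that the density of a strong solution is bounded away from $0$ and $+\infty$.

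\textbf{Bounding the material derivative.} I would split $\dot u=\partial_t u+(u\cdot\nabla)u$ and estimate the two pieces in each of the two weighted norms. Since $\rho\geq c_0>0$, one has $\norm{\partial_t u}_{L^2}^2\leq c_0^{-1}\int_{\R^d}\rho|\partial_t u|^2\dx$, so $\int_0^T\sigma(s)^{1-\eta}\norm{\partial_t u}_{L^2}^2\ds$ is controlled directly by $A_1^\eta$. For the convective part I would bound $\norm{(u\cdot\nabla)u}_{L^2}\leq\norm{u}_{L^\infty}\norm{\nabla u}_{L^2}$ and use that $\sup_{s}\sigma(s)^{1-\eta}\norm{\nabla u}_{L^2}^2\leq 2A_1^\eta$, so that
\[
\int_0^T\sigma(s)^{1-\eta}\norm{(u\cdot\nabla)u}_{L^2}^2\ds\leq\Big(\sup_{s}\sigma(s)^{1-\eta}\norm{\nabla u}_{L^2}^2\Big)\int_0^T\norm{u}_{L^\infty}^2\ds,
\]
which is finite by Lemma \ref{lem:decay0}. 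For the second weighted norm I would use the Leibniz expansion, schematically $\nabla((u\cdot\nabla)u)\sim\nabla u\,\nabla u+u\,\nabla^2 u$, giving $\norm{\nabla((u\cdot\nabla)u)}_{L^2}\lesssim\norm{\nabla u}_{L^4}^2+\norm{u}_{L^\infty}\norm{\nabla^2 u}_{L^2}$. The term $\int_0^T\sigma(s)^{2-\eta}\norm{\partial_t\nabla u}_{L^2}^2\ds$ is exactly controlled by $A_2^\eta$; the term $\int_0^T\sigma(s)^{2-\eta}\norm{\nabla u}_{L^4}^4\ds$ is bounded by Lemma \ref{lem:decay0}; and $\int_0^T\sigma(s)^{2-\eta}\norm{u}_{L^\infty}^2\norm{\nabla^2 u}_{L^2}^2\ds$ is handled by pulling out $\sup_s\sigma(s)^{2-\eta}\norm{\nabla^2 u}_{L^2}^2\leq 2A_2^\eta$ and invoking $\int_0^T\norm{u}_{L^\infty}^2\ds<\infty$ once more.

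\textbf{The Lipschitz bound.} Here I would read the momentum equation as a stationary Stokes system $-\nu\Delta u+\nabla P=-\rho\dot u$ with $\dive u=0$, for which the Calderón--Zygmund/Stokes estimate gives, for every $p\in(d,\infty)$,
\[
\norm{\nabla^2 u}_{L^p}+\norm{\nabla P}_{L^p}\leq C\norm{\rho\dot u}_{L^p}\leq C\norm{\dot u}_{L^p}.
\]
Via the Morrey embedding $W^{1,p}(\R^d)\hookrightarrow L^\infty(\R^d)$ for $p>d$ applied to $\nabla u$, one has $\norm{\nabla u}_{L^\infty}\lesssim\norm{\nabla u}_{L^p}+\norm{\nabla^2 u}_{L^p}$, in which the dominant term is $\norm{\nabla^2 u}_{L^p}\lesssim\norm{\dot u}_{L^p}$. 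This reduces the claim to estimating $\int_0^T\norm{\dot u}_{L^p}\ds$ for a suitable $p>d$, the $L^p$-norm of $\dot u$ being interpolated between $\norm{\dot u}_{L^2}$ and $\norm{\nabla\dot u}_{L^2}$ by Gagliardo--Nirenberg, while the lower-order contribution $\norm{\nabla u}_{L^p}$ is controlled by the same interpolation and is less singular in time.

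\textbf{Main obstacle.} The crux is the time integrability near $s=0$: the bounds on $\dot u$ from the first part carry the weights $\sigma^{1-\eta}$ and $\sigma^{2-\eta}$, so $\int_0^T\norm{\dot u}_{L^p}\ds$ is only conditionally finite. Writing the Gagliardo--Nirenberg factors as $\big(\sigma^{1-\eta}\norm{\dot u}_{L^2}^2\big)^{\bullet}\big(\sigma^{2-\eta}\norm{\nabla\dot u}_{L^2}^2\big)^{\bullet}\sigma^{-\gamma}$ and applying Hölder in time (the first two factors being absorbed by the weighted integral bounds on $\dot u$ established in the first part), the remaining factor $\sigma^{-\gamma}$ is integrated with exponent $2$, so one needs $\int_0^T\sigma^{-2\gamma}\ds<\infty$, i.e. $\gamma<\tfrac12$. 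A short computation gives $\gamma=\tfrac{2-\eta}{2}-\tfrac1p$ when $d=2$, so $\gamma<\tfrac12$ forces $p<\tfrac{2}{1-\eta}$; since one also needs $p>2$, this interval is nonempty precisely when $\eta>0$. For $d=3$ the analogous computation yields $\gamma=\tfrac12\big((1-\eta)+\theta\big)$ with $\theta=3(\tfrac12-\tfrac1p)>\tfrac12$ the interpolation exponent for $p>3$, and $\gamma<\tfrac12$ becomes $\theta<\eta$, which is compatible with $\theta>\tfrac12$ exactly when $\eta>\tfrac12$. Verifying these exponent inequalities — and thereby seeing how the dimension-dependent lower bounds on $\eta$ enter — is the only delicate point of the argument.
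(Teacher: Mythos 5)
Your proposal is correct. The first half of your argument---splitting $\dot u=\partial_t u+(u\cdot\nabla)u$, bounding the convective contributions by $\norm{u}_{L^\infty}\norm{\nabla u}_{L^2}$, $\norm{\nabla u}_{L^4}^2$ and $\norm{u}_{L^\infty}\norm{\nabla^2 u}_{L^2}$, and absorbing everything into \eqref{eq:StrongEst} and Lemma \ref{lem:decay0}---is exactly the paper's proof of the weighted bounds on $\dot u$. Where you genuinely diverge is the Lipschitz bound \eqref{eq:lip}: the paper does not prove it, but defers to \cite[Lemma 3.2]{PaicuZhangZhang2013} (claiming a ``direct adaptation'' for $d=3$), whereas you reconstruct that argument: read the momentum equation as a stationary Stokes system with force $-\rho\dot u$, use the Calder\'on--Zygmund/Stokes estimate $\norm{\nabla^2 u}_{L^p}\lesssim\norm{\dot u}_{L^p}$ (via $\rho\leq C_0$), apply Morrey's embedding for $p>d$, and close with weighted Gagliardo--Nirenberg interpolation in time. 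Your exponent bookkeeping is right: in 2d, $\gamma=(2-\eta)/2-1/p$ and the admissible window $2<p<2/(1-\eta)$ is nonempty precisely when $\eta>0$; in 3d, $\gamma=\big((1-\eta)+\theta\big)/2$ with $\theta=3(1/2-1/p)$, and the compatibility of $\theta<\eta$ with $\theta>1/2$ is exactly $\eta>1/2$. This buys something the paper's citation hides, namely a transparent view of where the dimension-dependent thresholds $\eta>d/2-1$ enter. Two small points to tighten: (i) in 3d the interpolation $\norm{\dot u}_{L^p}\lesssim\norm{\dot u}_{L^2}^{1-\theta}\norm{\nabla\dot u}_{L^2}^{\theta}$ requires $\theta\leq 1$, i.e. $p\leq 6$, so you should fix $p\in(3,6]$ (taken close to $3$ when $\eta$ is close to $1/2$) rather than allow arbitrary $p>3$; (ii) for the lower-order term $\norm{\nabla u}_{L^p}$, the clean statement is that the sup-in-time bounds in \eqref{eq:StrongEst} give $\norm{\nabla u(s)}_{L^p}\lesssim\sigma(s)^{-\gamma}$ pointwise with the same $\gamma$, and since this factor is integrated with power one (no H\"older splitting needed), only $\gamma<1$ is required---which is what your phrase ``less singular in time'' amounts to.
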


\begin{proof}
   Using Lemma \ref{lem:decay0} and \eqref{eq:StrongEst} we have 
    \begin{align*}
        \int_0^T \sigma(s)^{1-\eta} \norm{\dot u}_{L^2}^2 \dd s & \leq \int_0^T \sigma(s)^{1-\eta} \left(\norm{\partial_t  u}_{L^2}^2 + \norm{ (u \cdot \nabla) u }_{L^2}^2 \right)\dd s \\
        & \leq \int_0^T \sigma(s)^{1-\eta} \left(\norm{\partial_t  u}_{L^2}^2 + \norm{u }_{L^\infty}^2 \norm{ \nabla u }_{L^2}^2 \right)\dd s \leq C.
    \end{align*}
    For the gradient of the material derivative, using again Lemma \ref{lem:decay0} and \eqref{eq:StrongEst}, we get
   \begin{align*}
        \int_0^T \sigma(s)^{2-\eta} \norm{\nabla\dot u}_{L^2}^2 \dd s & \leq \int_0^T \sigma(s)^{2-\eta} \left(\norm{\partial_t \nabla u}_{L^2}^2 + \norm{\nabla (u \cdot \nabla) u }_{L^2}^2 \right)\dd s \\
        & \leq \int_0^T \sigma(s)^{2-\eta} \left(\norm{\partial_t \nabla u}_{L^2}^2 + \norm{\nabla u \nabla u}_{L^2}^2+\norm{ u \nabla^2 u}_{L^2}^2\right) \dd s \\
        & \leq \int_0^T \sigma(s)^{2-\eta} \left(\norm{\partial_t \nabla u}_{L^2}^2 + \norm{\nabla u}_{L^4}^4+\norm{ u}_{L^\infty}^2\norm{ \nabla^2 u}_{L^2}^2\right) \dd s \leq C.
    \end{align*}
    To derive \eqref{eq:lip}, for $d=2$, the proof can be found in \cite[Lemma 3.2]{PaicuZhangZhang2013} and, for $d=3$, it comes from a direct adaptation of \cite[Lemma 3.2]{PaicuZhangZhang2013}.
\end{proof}

\vspace{3mm}
\bibliographystyle{abbrv}
\bibliography{Inhomogeneous-ref.bib}

\vfill 

\end{document}